\documentclass{elsarticle}

\usepackage{etoolbox}
\makeatletter
\patchcmd{\ps@pprintTitle}{\footnotesize\itshape
       Preprint submitted to \ifx\@journal\@empty Elsevier
       \else\@journal\fi\hfill\today}{\relax}{}{}
\makeatother

\usepackage{amsmath}
\usepackage{color} 
\usepackage{amssymb}
\usepackage{amsthm}
\usepackage{MnSymbol}
\usepackage{pgf,tikz}
\usepackage{caption}

\usepackage{chngcntr}
\usepackage{lipsum}
\usepackage{color}
\usetikzlibrary{arrows}

\theoremstyle{plain}
\newtheorem{thm}{Theorem}[section]
\newtheorem{lem}{Lemma}[section]
\newtheorem{cor}{Corollary}[section]
\newtheorem{rem}{Remark}[section]
\newtheorem{prop}{Proposition}[section]

\newtheorem{defn}{Definition}[section]
\newtheorem{Fact}{Fact}[section]

\newtheorem{Example}{Example}[section]
\newtheorem{Notation}{Notation}[section]

\newtheorem*{Discussion}{Discussion}
\newtheorem{Observation}{Observation}[section]
\numberwithin{equation}{section}



\begin{document}
\nocite{*}
\sloppy

\newtheorem*{Convention}{Convention}

\begin{frontmatter}

\title{Decomposability and co-modular indices of tournaments}
\author{Houmem Belkhechine\fnref{}}
\address{University of Carthage, Bizerte Preparatory Engineering Institute, Bizerte, Tunisia}
\ead{houmem.belkhechine@ipeib.rnu.tn}

\author{Cherifa Ben Salha\fnref{}}
\address{University of Carthage, Faculty of Sciences of Bizerte, Bizerte, Tunisia}
\ead{cherifa.bensalha@fsb.u-carthage.tn}

\begin{abstract} 
Given a tournament $T$, a module of $T$ is a subset $X$ of $V(T)$ such that for $x, y\in X$ and $v\in V(T)\setminus X$, $(x,v)\in A(T)$ if and only if $(y,v)\in A(T)$. The trivial modules of $T$ are $\emptyset$, $\{u\}$ $(u\in V(T))$ and $V(T)$. The tournament $T$ is indecomposable if all its modules are trivial; otherwise it is decomposable. The decomposability index of $T$, denoted by $\delta(T)$, is the smallest number of arcs of $T$ that must be reversed to make $T$ indecomposable. The first author conjectured that for $n \geq 5$, we have  $\delta(n) = \left\lceil \frac{n+1}{4} \right\rceil$, where $\delta(n)$ is the maximum of $\delta(T)$ over the tournaments $T$ with $n$ vertices. In this paper we prove this conjecture by introducing the co-modular index of a tournament $T$, denoted by $\Delta(T)$, as the largest number of disjoint co-modules of $T$, where a co-module of $T$ is a subset $M$ of $V(T)$ such that $M$ or $V(T) \setminus M$ is a nontrivial module of $T$. We prove that for $n \geq 3$, we have $\Delta(n) = \left\lceil \frac{n+1}{2} \right\rceil$, where $\Delta(n)$ is the maximum of $\Delta(T)$ over the tournaments $T$ with $n$ vertices. Our main result is the following close relationship between the above two indices: for every tournament $T$ with at least $5$ vertices, we have $\delta(T) = \left\lceil \frac{\Delta(T)}{2} \right\rceil$. As a consequence, we obtain $\delta(n) = \left\lceil \frac{\Delta(n)}{2} \right\rceil = \left\lceil \frac{n+1}{4} \right\rceil$ for $n \geq 5$, and we answer some further related questions.            
\end{abstract}
\begin{keyword}
Module\sep co-module \sep indecomposable \sep inversion \sep co-modular decomposition \sep decomposability index  \sep co-modular index. 
\MSC[2010] 05C20 \sep  05C35. 
\end{keyword}
\end{frontmatter}

\section{Introduction and presentation of results}
A {\it tournament} $T$ consists of a finite set $V(T)$ of {\it vertices} together with a set $A(T)$ of ordered pairs of distinct vertices, called {\it arcs}, such that for all $x \neq y \in V(T)$, $(x,y) \in A(T)$ if and only if $(y,x) \not\in A(T)$. Such a tournament is denoted by (V(T), A(T)). The {\it cardinality} of $T$, denoted by $v(T)$, is that of $V(T)$. Given a tournament $T$, with each subset $X$ of $V(T)$ is associated the {\it subtournament} $T[X] = (X, A(T) \cap (X \times X))$ of $T$ induced by $X$. For $X\subseteq V(T)$ (resp. $x\in V(T)$), the subtournament $T[V(T) \setminus X]$ (resp. $T[V(T) \setminus \{x\}$]) is simply denoted by $T-X$ (resp. $T-x$). Two tournaments $T$ and $T'$ are {\it isomorphic}, which is denoted by $T \simeq T'$, if there exists an {\it isomorphism} from $T$ onto $T'$, that is, a bijection $f$ from $V(T)$ onto $V(T')$ such that for every $x, y \in V(T)$, $(x, y) \in A(T)$ if and only if $(f(x), f(y)) \in A(T')$. With each tournament $T$ is associated its {\it dual} tournament $T^{\star}$ defined by $V(T^{\star})= V(T)$ and $A(T^{\star}) =\{(x,y) \colon\ (y,x)\in A(T)\}$. 

A {\it transitive} tournament is a tournament $T$ such that for every $x,y,z \in V(T)$, if $(x,y) \in A(T)$ and  $(y,z) \in A(T)$, then $(x,z) \in A(T)$. Let $n$ be a positive integer. We denote by $\underline{n}$ the transitive tournament whose vertex set is $\{0, \ldots, n-1\}$ and whose arcs are the ordered pairs $(i,j)$ such that $0 \leq i < j \leq n-1$. Up to isomorphism, $\underline{n}$ is the unique transitive tournament with $n$ vertices.

The paper is based on the following notions. Given a tournament $T$, a subset $M$ of $V(T)$ is a {\it module} \cite{Spinrad} (or a {\it clan} \cite{E} or an {\it interval} \cite{I}) of $T$ provided that for every $x,y \in M$ and for every $v \in V(T) \setminus M$, $(v,x) \in A(T)$ if and only if $(v,y) \in A(T)$. For example, $\emptyset$, $\{x\}$, where $x \in V(T)$, and $V(T)$ are modules of $T$, called {\it trivial} modules. 
A tournament is {\it indecomposable} \cite{I, ST} (or {\it prime} \cite{Spinrad} or {\it primitive} \cite{E}) if all its modules are trivial; otherwise it is {\it decomposable}. Let us consider some examples. To begin, consider the case of small tournaments. The tournaments with at most two vertices are clearly indecomposable. The tournaments $\underline{3}$ and $C_{3} = (\{0,1,2\}, \{(0,1), (1,2), (2,0)\})$ are, up to isomorphism, the unique tournaments with three vertices. The tournament $C_{3}$ is indecomposable, whereas $\underline{3}$ is decomposable. Up to isomorphism, the tournaments with four vertices are the four tournaments $\underline{4}$, $T_4=(\{0,1,2,3\}, \{(0,1), (1,2), (2,0), (3,0), (3,1), (3,2)\})$, $T_4^{\star}$, and $(\{0,1,2,3\}, \{(0,1), (1,2), (2,0), (3,0), (3,1), (2,3)\})$, all of them are decomposable. We now consider the case of transitive tournaments. 
For every integer $n \geq 3$, the transitive tournament $\underline{n}$ is decomposable. More precisely,
\begin{equation} \label{eq mod inter n}
 \text{the modules of} \ \underline{n} \ \text{are the intervals of the usual total order on} \ V(\underline{n}).  
\end{equation}
Lastly, consider the cases of isomorphic tournaments and dual tournaments. Let $T$ and $T'$ be isomorphic tournaments. If $f$ is an isomorphism from $T$ onto $T'$, then a subset $M$ of $V(T)$ is a module of $T$ if and only if $f(M)$ is a module of $T'$. In particular, $T$ is indecomposable if and only if $T'$ is. 
Similarly, a tournament $T$ and its dual share the same modules. In particular, $T$ is indecomposable if and only if $T^{\star}$ is. These remarks justify that in certain proofs, tournaments are considered up to isomorphism and/or duality.

Let $T$ be a tournament. An {\it inversion} of an arc $a = (x,y) \in A(T)$ consists of replacing the arc $a$ by $a^{\star}$ in $A(T)$, where $a^{\star} = (y,x)$. The tournament obtained from $T$ after reversing the arc $a$ is denoted by ${\rm Inv}(T,a)$ or ${\rm Inv}(T,\{x,y\})$. Thus ${\rm Inv}(T,a) = {\rm Inv}(T,\{x,y\}) = (V(T), (A(T) \setminus \{a\}) \cup \{a^{\star}\})$. More generally, for $B \subseteq A(T)$, we denote by ${\rm Inv}(T, B)$ the tournament obtained from $T$ after reversing all the arcs of $B$, that is ${\rm Inv}(T, B)= (V(T), (A(T) \setminus B) \cup B^{\star})$, where $B^{\star} = \{b^{\star} \colon\ b \in B\}$. For example, $T^{\star}={\rm Inv}(T, A(T))$.

Given a tournament $T$ with at least five vertices,
the {\it decomposability index} of $T$, denoted by $\delta(T)$, was defined by the first author \cite{Index} as the least integer $m$ for which there exists $B \subseteq A(T)$ such that $|B|=m$ and ${\rm Inv}(T, B)$ is indecomposable. The index $\delta(T)$ is well-defined because, as observed in \cite{Index}, for every integer $n \geq 5$, there exist indecomposable tournaments with $n$ vertices. Notice that $\delta(T) = \delta(T^{\star})$.  
Similarly, isomorphic tournaments have the same decomposability index. The exact value of the decomposability index of transitive tournaments was found in \cite{Index}.

\begin{prop} [\cite{Index}] \label{deltatr}
 Given a transitive tournament $T_n$ with $n$ vertices, where $n \geq 5$, we have $\delta(T_n) = \left\lceil \frac{n+1}{4} \right\rceil$. 
\end{prop}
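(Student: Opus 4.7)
The plan is to bracket $\delta(\underline n)$ by matching upper and lower bounds; the lower bound is clean, and the upper bound is the harder half.

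For the \emph{lower bound} $\delta(\underline n) \ge \lceil (n+1)/4 \rceil$, consider any $B \subseteq A(\underline n)$ such that $\mathrm{Inv}(\underline n, B)$ is indecomposable, view $B$ as an undirected edge set defining a graph $G_B$ on $V(\underline n) = \{0, 1, \ldots, n-1\}$, and let $J$ denote the set of vertices isolated in $G_B$. Using (\ref{eq mod inter n}), I would extract two structural observations. (i) Neither $0$ nor $n-1$ lies in $J$: otherwise all arcs incident to, say, $0$ are inherited unchanged from $\underline n$, so $0$ is the predecessor of every other vertex in $\mathrm{Inv}(\underline n, B)$, and since $\{1, \ldots, n-1\}$ is an interval (hence a module of $\underline n$), it remains a nontrivial module. (ii) No two consecutive vertices $v, v+1$ both lie in $J$: otherwise every $w \notin \{v, v+1\}$ relates to $v$ and to $v+1$ exactly as in $\underline n$, and since $\{v, v+1\}$ is already a module of $\underline n$, it persists as a nontrivial module. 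Combining (i) and (ii), $J \subseteq \{1, \ldots, n-2\}$ is an independent set of the path on $V(\underline n)$, so $|J| \le \lfloor (n-1)/2 \rfloor$, whence $|V(G_B)| \ge \lceil (n+1)/2 \rceil$. Since each arc contributes at most two vertices to $V(G_B)$, one gets $|B| \ge \lceil |V(G_B)| / 2 \rceil \ge \lceil (n+1)/4 \rceil$.

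For the matching \emph{upper bound} $\delta(\underline n) \le \lceil (n+1)/4 \rceil$, I would exhibit, for every $n \ge 5$, an explicit family $B \subseteq A(\underline n)$ of cardinality $\lceil (n+1)/4 \rceil$ whose reversal renders $\underline n$ indecomposable. The small cases suggest a crossing-arc pattern: $B = \{(0,2), (2,4)\}$ for $n = 5$; $B = \{(0,3), (2,5)\}$ for $n = 6$; $B = \{(0,4), (2,6)\}$ for $n = 7$. For general $n$, writing $n = 4q + r$ with $r \in \{0, 1, 2, 3\}$, I would give four constructions (one per residue) engineered so that (1) both $0$ and $n-1$ belong to $V(G_B)$, (2) the isolated set is a maximal independent set of the path on $V(\underline n)$ inside $\{1, \ldots, n-2\}$, and (3) the arcs interleave enough to destroy every larger interval. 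By (\ref{eq mod inter n}), checking indecomposability reduces to verifying, for each nontrivial interval $I$ of $\{0, \ldots, n-1\}$ (and then for the few non-interval candidates), that some $v \notin I$ has $N_{G_B}(v) \cap I$ a proper nonempty subset of $I$---a routine residue-by-residue bookkeeping.

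The main obstacle is the upper bound: the lower bound drops out transparently from the two local observations (i) and (ii), but producing a matching explicit construction is delicate. After meeting the structural bound $|V(G_B)| = \lceil (n+1)/2 \rceil$, one still has to prevent any interval $I$ from being \emph{over-covered}, i.e., from having every external vertex $B$-adjacent to all of $I$ or to none of $I$; ruling this out with only $\lceil (n+1)/4 \rceil$ arcs is what forces the case split on $n \bmod 4$.
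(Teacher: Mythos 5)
A point of context first: the paper does not prove this proposition itself --- it is quoted from \cite{Index} --- so there is no internal proof to compare against; within this paper the statement would instead fall out of Theorem~\ref{main} together with Proposition~\ref{Deltatr}, with no explicit construction at all. Your lower-bound half is correct and is essentially the paper's general argument specialized to $\underline{n}$: your observations (i) and (ii) amount to saying that the set of reversed arcs must meet every member of the co-modular decomposition $\{\{0\},\{n-1\}\}\cup\{\{2i-1,2i\}\}$ used in the proof of Proposition~\ref{Deltatr}, which is exactly the counting behind (\ref{eq deltaDelta}), and the arithmetic $|B|\geq\left\lceil \left\lceil (n+1)/2\right\rceil/2\right\rceil=\left\lceil (n+1)/4\right\rceil$ is right.

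The upper bound, however, is where your proposal has a genuine gap, and you concede it is the hard half. You exhibit $B$ only for $n=5,6,7$ (those three check out) and then promise ``four constructions, one per residue class mod $4$,'' without defining the arc set $B$ for general $n$ or verifying anything about it. The conditions you list --- $0,n-1\in\mathcal{V}(B)$, the untouched vertices forming a maximal independent set of the path inside $\{1,\ldots,n-2\}$, arcs that ``interleave enough'' --- are only the necessary conditions extracted from your own lower bound; they are far from sufficient, and the indecomposability check is not the routine interval bookkeeping you describe. In particular, (\ref{eq mod inter n}) describes the modules of $\underline{n}$, not of ${\rm Inv}(\underline{n},B)$: a module of the reversed tournament need not be an interval of the original order (your parenthetical ``and then for the few non-interval candidates'' is precisely the unproved part, since nothing in the sketch bounds or characterizes these candidates). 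So as it stands you have proved $\delta(\underline{n})\geq\left\lceil (n+1)/4\right\rceil$ for all $n\geq 5$ but $\delta(\underline{n})\leq\left\lceil (n+1)/4\right\rceil$ only for $n\in\{5,6,7\}$. To close the gap you must write down one explicit family $B_n$ (e.g.\ a uniform pattern of ``long'' crossing arcs rather than four ad hoc residue cases) and give an argument that every nontrivial subset $M\subseteq V(\underline{n})$ --- interval or not --- fails to be a module of ${\rm Inv}(\underline{n},B_n)$, typically by first showing that any module must avoid $\mathcal{V}(B_n)$-incidences in a way that forces it back into an interval, and then killing the intervals.
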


For $n \geq 5$, let $\delta(n)$ be the maximum of $\delta(T)$ over the tournaments $T$ with $n$ vertices. The first author \cite{Index} conjectured that $\delta(n) = \left\lceil \frac{n+1}{4} \right\rceil$ and asked some related questions. The original purpose of the paper is to prove or disprove this conjecture. We prove that this conjecture holds by establishing related results involving a new index, called co-modular index. As a consequence, we obtain the following theorem as well as answers for some further questions asked in \cite{Index}.

\begin{thm} \label{deltan}
For every integer $n \geq 5$, we have $\delta(n) = \left\lceil \frac{n+1}{4} \right\rceil$.
\end{thm}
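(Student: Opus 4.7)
The strategy, announced in the abstract, is to sandwich $\delta(n)$ between a lower bound coming from transitive tournaments and an upper bound coming from the newly introduced co-modular index. The lower bound is immediate: by Proposition~\ref{deltatr}, the transitive tournament $\underline{n}$ already realizes $\delta(\underline{n}) = \lceil (n+1)/4 \rceil$, so $\delta(n) \geq \lceil (n+1)/4 \rceil$. For the upper bound, the plan is to establish two intermediate statements flagged in the abstract, namely \textbf{(a)} $\delta(T) = \lceil \Delta(T)/2 \rceil$ for every tournament $T$ with $v(T) \geq 5$, and \textbf{(b)} $\Delta(n) = \lceil (n+1)/2 \rceil$ for every $n \geq 3$. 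Granted (a) and (b), combined with the elementary identity $\lceil \lceil (n+1)/2 \rceil/2 \rceil = \lceil (n+1)/4 \rceil$, any $n$-vertex tournament $T$ satisfies $\delta(T) = \lceil \Delta(T)/2 \rceil \leq \lceil \Delta(n)/2 \rceil = \lceil (n+1)/4 \rceil$, which finishes the proof.

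For (b), the natural route is a two-sided argument. The transitive tournament $\underline{n}$ should saturate the bound: by~\eqref{eq mod inter n} its modules are the intervals of $\{0,\ldots,n-1\}$, so one packs many disjoint co-modules by taking consecutive pairs of vertices (each such pair being a nontrivial module), with a leftover singleton handled by noting that its complement is again an interval module. The matching upper bound, that no tournament admits more than $\lceil (n+1)/2 \rceil$ pairwise disjoint co-modules, calls for an extremal counting argument: a nontrivial module has at least two elements, so packing many disjoint co-modules forces most of them to be modules themselves, and disjointness then restricts how many complements of modules can also appear in the family.

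The heart of the proof lies in (a). The inequality $\delta(T) \leq \lceil \Delta(T)/2 \rceil$ is the constructive direction: starting from $\Delta(T)$ pairwise disjoint co-modules, the plan is to pick roughly one arc per pair of co-modules so that after reversal every nontrivial module of $T$ is broken. The reverse inequality $\delta(T) \geq \lceil \Delta(T)/2 \rceil$ is where I expect the main obstacle, since it asserts that a single arc inversion cannot neutralize more than two disjoint co-modules simultaneously, so that any $B \subseteq A(T)$ with ${\rm Inv}(T,B)$ indecomposable must satisfy $2|B| \geq \Delta(T)$. I would attack this by analysing, for a reversed arc $(x,y)$, which members of a disjoint family of co-modules of $T$ can fail to remain co-modules of ${\rm Inv}(T,\{x,y\})$: such a co-module $M$ must have $M$ or $V(T)\setminus M$ separating $x$ from $y$, and the disjointness of the family combined with this separation condition should cap the count of affected co-modules at two per reversal. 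This module-level bookkeeping, together with the extra care needed when $|V(T)|=5$, is where I anticipate the bulk of the technical work.
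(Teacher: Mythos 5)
Your top-level reduction coincides with the paper's: Theorem~\ref{deltan} is obtained by combining the identity $\delta(T)=\left\lceil \Delta(T)/2\right\rceil$ (Theorem~\ref{main}) with $\Delta(n)=\left\lceil (n+1)/2\right\rceil$ (Theorem~\ref{Deltan}) and the ceiling identity $\left\lceil \left\lceil (n+1)/2\right\rceil/2\right\rceil=\left\lceil (n+1)/4\right\rceil$. Your sketch of (b) is workable and even a bit more elementary than the paper's: once one shows that at most two singletons can be co-modules of a tournament (Assertion~1 of Lemma~\ref{comod part}), the counting $n\geq 2|D|-2$ gives $\Delta(n)\leq\left\lceil (n+1)/2\right\rceil$ directly, bypassing the Erd\H{o}s--Fried--Hajnal--Milner theorem that the paper invokes. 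The problem is your treatment of (a), where you have the difficulty exactly backwards. The inequality $\delta(T)\geq\left\lceil \Delta(T)/2\right\rceil$, which you single out as the main obstacle, is the trivial half: if ${\rm Inv}(T,B)$ is indecomposable, then every member of a co-modular decomposition must meet $\mathcal{V}(B)$ (a member disjoint from $\mathcal{V}(B)$ remains a co-module), and disjointness gives $2|B|\geq|\mathcal{V}(B)|\geq\Delta(T)$; the paper dispatches this in a few lines before stating Theorem~\ref{main}.

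The genuinely hard half is the one you dismiss as the constructive direction, $\delta(T)\leq\left\lceil \Delta(T)/2\right\rceil$. Picking ``roughly one arc per pair of co-modules so that after reversal every nontrivial module of $T$ is broken'' does not work as stated: destroying the members of one chosen $\Delta$-decomposition says nothing about nontrivial modules unrelated to that decomposition, nor about modules created by the reversals, and an arbitrary arc joining two co-modules need not even decrease $\Delta$ by two. This is precisely where the paper spends Sections 2--5: minimal co-modules and their overlap structure (Lemma~\ref{degre G_T}), transitive components, $\delta$-decompositions and Proposition~\ref{propo Delta 234}, all in order to prove Proposition~\ref{prop Delta 4} (for $\Delta(T)\geq 4$ there is a carefully chosen arc $a$, with endpoints taken in the sets $\widetilde{M_1}$ and $\widetilde{M_3}$ of a suitable $\delta$-decomposition, such that $\Delta({\rm Inv}(T,a))=\Delta(T)-2$) together with the delicate base cases $\Delta(T)=2$ and $\Delta(T)=3$ (Propositions~\ref{prop Delta 2} and~\ref{prop Delta 3}); an induction on $\Delta(T)$ then yields Theorem~\ref{main}. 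None of this machinery, nor any substitute for it, is anticipated in your plan, so as it stands the proposal reduces Theorem~\ref{deltan} to a statement whose proof is the real content of the paper and is left essentially unaddressed.
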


\begin{Convention} \normalfont 
 Given a tournament $T$, for $X \subseteq V(T)$, $V(T) \setminus X$ is denoted by $\overline{X}$.
\end{Convention}

Given a tournament $T$, a {\it co-module} of $T$ is a subset $M$ of $V(T)$ such that $M$ or $\overline{M}$ is a nontrivial module of $T$. For instance,
\begin{equation} \label{eq comod dec}
 \text{a tournament} \ T \ \text{is decomposable if and only if} \ T \ \text{admits a co-module}.
\end{equation}
Observe that 
\begin{equation} \label{eq comod non vide}
\text{neither} \ \varnothing \ \text{nor} \ V(T) \ \text{is a co-module of} \ \text{some tournament} \ T.
\end{equation} 
Moreover, given a tournament $T$, contrary to the set of modules of $T$, 
\begin{equation} \label{eq close}
 \text{the set of co-modules of} \ T \ \text{is closed under complementation}.
\end{equation}
Given a tournament $T$, a {\it co-modular decomposition} of $T$ is a set of pairwise disjoint co-modules of $T$.
For instance, a tournament $T$ is decomposable if and only if it admits a nonempty co-modular decomposition. The {\it co-modular index} of a tournament $T$, denoted by $\Delta(T)$, is the largest size of a co-modular decomposition of $T$, i.e., the maximum of $\{|D| : D \ \text{is a co-modular decomposition of} \ T\}$.

 Contrary to the co-modular index, which is defined for all tournaments, the decomposability index is not defined for the tournaments with four vertices because, as observed above, these tournaments are all decomposable. Therefore, since the few tournaments with at most three vertices can easily be checked separately, we only consider tournaments with at least five vertices when the decomposability index is considered. 
For instance, given a tournament $T$ such that $v(T) \geq 5$, 
\begin{equation} \label{eq indec delta}
T \ \text{is  indecomposable  if  and only  if} \ \Delta(T) = \delta(T) = 0. 
\end{equation}
Notice that $\Delta (T)$ is never equal to $1$. Indeed, by (\ref{eq comod dec}) and (\ref{eq close}), 
\begin{equation} \label{eq dec delta}
\text{a  tournament T is decomposable if and only if} \ \Delta(T) \geq 2.
\end{equation}

Given a tournament $T$, a {\it $\Delta$-decomposition} of $T$ is a co-modular decomposition $D$ of $T$ which is of maximum size, i.e., such that $|D| = \Delta(T)$.  
 As observed for the decomposability index, for every isomorphic tournaments $T$ and $T'$, we have $\Delta(T) = \Delta (T') = \Delta(T^{\star})$. The next result is the analogue of Proposition~\ref{deltatr} for co-modular index.

\begin{prop} \label{Deltatr}
 Given a transitive tournament $T_n$ with $n$ vertices, where $n \geq 3$, we have $\Delta(T_n) = \left\lceil \frac{n+1}{2} \right\rceil$.
\end{prop}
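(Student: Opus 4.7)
The plan is to exploit (\ref{eq mod inter n}), which identifies the modules of $\underline{n}$ with the intervals $\{i, i+1, \ldots, j\}$ of the usual order on $V(\underline{n}) = \{0, \ldots, n-1\}$. Consequently, a subset $M$ of $V(\underline{n})$ is a co-module of $\underline{n}$ if and only if $M$ or $\overline{M}$ is such an interval of size between $2$ and $n-1$. I would then prove the lower and upper bounds on $\Delta(\underline{n})$ separately.

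For the lower bound, I would exhibit an explicit co-modular decomposition of $\underline{n}$ of size $\lceil(n+1)/2\rceil$. When $n$ is even, take
\[
D \;=\; \bigl\{\, \{0\},\, \{1,2\},\, \{3,4\},\, \ldots,\, \{n-3, n-2\},\, \{n-1\} \,\bigr\};
\]
when $n$ is odd, take
\[
D \;=\; \bigl\{\, \{0\},\, \{1,2\},\, \{3,4\},\, \ldots,\, \{n-2, n-1\} \,\bigr\}.
\]
Each pair appearing in $D$ is an interval of length $2$, hence a nontrivial module of $\underline{n}$, while the singleton $\{0\}$ (and, in the even case, $\{n-1\}$) is a co-module because its complement is an interval of length $n-1$. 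A direct count gives $|D| = \lceil(n+1)/2\rceil$ in both parities.

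For the upper bound, let $D$ be any co-modular decomposition of $\underline{n}$ and let $s$ be the number of singletons in $D$. A singleton $\{v\}$ is a co-module of $\underline{n}$ only if $\overline{\{v\}}$ is a nontrivial module, that is, an interval of length $n-1$, which forces $v \in \{0, n-1\}$; hence $s \leq 2$. Every non-singleton member of $D$ has size at least $2$, so since the members of $D$ are pairwise disjoint,
\[
n \;\geq\; \sum_{M \in D} |M| \;\geq\; s + 2(|D| - s) \;=\; 2|D| - s.
\]
Therefore $|D| \leq (n+s)/2 \leq (n+2)/2$, and since $|D|$ is an integer, $|D| \leq \lfloor (n+2)/2 \rfloor = \lceil (n+1)/2 \rceil$.

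The argument is short and essentially forced once the co-modules of $\underline{n}$ are identified. The only point that requires care is the observation that $\{0\}$ and $\{n-1\}$ are the only singleton co-modules of $\underline{n}$: this is precisely what produces the $+1$ in $\lceil (n+1)/2 \rceil$ rather than the naive bound $\lceil n/2 \rceil$ that a pure size argument would yield.
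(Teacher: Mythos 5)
Your proof is correct and follows essentially the same route as the paper: an explicit decomposition of size $\lceil (n+1)/2 \rceil$ built from the endpoint singletons and consecutive pairs, and an upper bound obtained by noting that only $\{0\}$ and $\{n-1\}$ can be singleton co-modules, so $n \geq 2|D| - 2$. The only (immaterial) difference is your choice of decomposition in the odd case, which uses one singleton and one extra pair instead of the paper's two singletons.
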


\begin{proof}
 Up to isomorphism, we may assume  $T_n = \underline{n}$. Let us consider the co-modular decomposition $D_n$ of $T_n$ defined as follows (see (\ref{eq mod inter n})).
\begin{equation*} \label{eq D_n}
 D_n = \{\{0\}, \{n-1\}\} \cup \{\{2i-1, 2i\} \colon\ 1 \leq i \leq \lfloor \frac{n-2}{2}  \rfloor\}.
\end{equation*}
Clearly $|D_n| = \left\lceil \frac{n+1}{2} \right\rceil$ and $D_n$ is a co-modular decomposition of $T_n$. Thus $\Delta(T_n) \geq \left\lceil \frac{n+1}{2} \right\rceil$. Let $D'_n$ be another co-modular decomposition of $T_n$. Since $0$ and $n-1$ are the unique vertices $x$ of $T_n$ such that $\{x\}$ is a co-module of $T_n$, then $D'_n$ contains at most two singletons. Therefore, for every $M \in D'_n \setminus \{\{0\}, \{n-1\}\}$, we have $|M| \geq 2$ (see (\ref{eq comod non vide})). It follows that $n \geq 2|D'_n| -2$, i.e., $|D'_n| \leq \frac{n+2}{2}$. Thus, $|D'_n| \leq \left\lfloor \frac{n+2}{2} \right\rfloor = \left\lceil \frac{n+1}{2} \right\rceil$ so that $\Delta(T_n) \leq \left\lceil \frac{n+1}{2} \right\rceil$. We conclude that $\Delta(T_n) = \left\lceil \frac{n+1}{2} \right\rceil$.    
\end{proof}

Now, for $n \geq 3$, let $\Delta(n)$ be the maximum of $\Delta(T)$ over the tournaments $T$ with $n$ vertices.
The analogue of Theorem~\ref{deltan} for co-modular index (see Theorem~\ref{Deltan}) is a consequence of Proposition~\ref{Deltatr} and the following theorem due to Erd\H{o}s et al. \cite{EFHM}. 

\begin{Notation} \normalfont
 Given a tournament $T$, the set of the modules of $T$ is denoted by $\mathcal{M}(T)$. 
\end{Notation}

\begin{thm} [\cite{EFHM}] \label{erdos}
 Given a non-transitive tournament $T$, there exists a transitive tournament $T'$ such that $V(T') = V(T)$ and $\mathcal{M}(T) \varsubsetneq \mathcal{M}(T')$.
\end{thm}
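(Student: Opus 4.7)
The plan is to proceed by strong induction on $n := v(T)$, with the base cases $n \leq 2$ vacuous since every such tournament is transitive.

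If $T$ is indecomposable, then $\mathcal{M}(T)$ consists only of the trivial modules. I take $T'$ to be any copy of $\underline{n}$ on $V(T)$: by (\ref{eq mod inter n}), $\mathcal{M}(T')$ contains all trivial modules and, since $n \geq 3$, the nontrivial interval $\{0,1\}$, so $\mathcal{M}(T) \varsubsetneq \mathcal{M}(T')$.

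If $T$ is decomposable, I invoke the Gallai-type modular decomposition: I write $V(T) = M_1 \sqcup \cdots \sqcup M_k$ as the partition into maximal proper strong modules, so $k \geq 2$ and the quotient $T/\pi$ is either transitive or indecomposable (on at least $3$ vertices in the latter case). In both subcases I replace each factor $T[M_i]$ by a transitive tournament $T'_i$ on $M_i$ satisfying $\mathcal{M}(T[M_i]) \subseteq \mathcal{M}(T'_i)$: if $T[M_i]$ is already transitive I take $T'_i = T[M_i]$, otherwise I apply the induction hypothesis (note $|M_i| < n$). I then assemble $T'$ as the transitive tournament on $V(T)$ whose blocks are $M_1,\ldots,M_k$ placed in a chosen linear order — the order inherited from $T/\pi \simeq \underline{k}$ in the transitive subcase, and an arbitrary one in the indecomposable subcase. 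The standard module-lifting principle (every module of $T$ either lies inside a single strong block $M_i$ and is a module of $T[M_i]$, or is a union of blocks corresponding to a module of $T/\pi$) then yields $\mathcal{M}(T) \subseteq \mathcal{M}(T')$, because modules of $T[M_i]$ are intervals of $T'_i$ hence of $T'$, and intervals of $\underline{k}$ lift to intervals of $T'$.

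For the strict inclusion, in the transitive-quotient subcase the non-transitivity of $T$ together with transitivity of $T/\pi$ forces some $T[M_i]$ to be non-transitive, and the inductive strict inclusion $\mathcal{M}(T[M_i]) \varsubsetneq \mathcal{M}(T'_i)$ supplies a module of $T'$ that is not one of $T$. In the indecomposable-quotient subcase, $M_1 \cup M_2$ (the union of the first two blocks in the chosen order) is an interval of $T'$ yet cannot be a module of $T$, since that would produce a nontrivial module $\{M_1,M_2\}$ of the indecomposable quotient $T/\pi$. The main obstacle is the module-lifting principle itself, which demands the use of strong modules: once one observes that a strong module either contains, is contained in, or is disjoint from any module of $T$, the classification of $\mathcal{M}(T)$ in terms of $\mathcal{M}(T/\pi)$ and the $\mathcal{M}(T[M_i])$ becomes transparent, and the construction above routinely assembles the desired $T'$.
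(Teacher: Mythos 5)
There is nothing in the paper to compare against here: Theorem~\ref{erdos} is quoted from \cite{EFHM} and the authors give no proof of it, so your argument can only be judged on its own merits, and on those merits it is correct. You prove the statement from scratch by induction via Gallai's modular decomposition: partition into maximal proper strong modules, quotient either transitive or prime on at least three blocks, replace each factor by a transitive tournament containing its modules (induction), and reassemble lexicographically; the module-lifting principle you invoke (a module of $T$ either sits inside a block, where it restricts to a module of the factor, or is a union of blocks projecting to a module of the quotient) is standard and correctly applied, and your two strictness witnesses (an inherited strict witness inside a non-transitive factor in the linear-quotient case, noting that such a set cannot be a module of $T$ since modules restrict to blocks by Assertion~1 of Proposition~\ref{propo modules}; the union of two consecutive blocks in the prime-quotient case, which would force a nontrivial module of the prime quotient) both work. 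The only caveat is that you lean on the full strong-module/quotient machinery, which is a heavier (though entirely standard) toolkit than anything this paper itself deploys --- the paper confines itself to the elementary module calculus of Proposition~\ref{propo modules} and simply outsources this theorem to Erd\H{o}s, Fried, Hajnal and Milner; your route buys a self-contained proof at the cost of importing that decomposition theorem, which you state rather than prove.
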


\begin{thm} \label{Deltan}
 For every integer $n \geq 3$, we have $\Delta(n) = \left\lceil \frac{n+1}{2} \right\rceil$. 
\end{thm}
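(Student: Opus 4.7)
The plan is to reduce the upper bound to the transitive case and then invoke Proposition~\ref{Deltatr}. The lower bound $\Delta(n) \geq \lceil (n+1)/2 \rceil$ is immediate: the transitive tournament $\underline{n}$ has $n$ vertices and, by Proposition~\ref{Deltatr}, satisfies $\Delta(\underline{n}) = \lceil (n+1)/2 \rceil$.

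For the upper bound, the key observation I would isolate first is a monotonicity principle: if $T$ and $T'$ are tournaments on the same vertex set with $\mathcal{M}(T) \subseteq \mathcal{M}(T')$, then $\Delta(T) \leq \Delta(T')$. Indeed, the set of co-modules is determined by the nontrivial modules (via the definition $M$ or $\overline{M}$ is a nontrivial module), and the trivial modules $\emptyset$, $\{x\}$, $V(T)$ are the same for $T$ and $T'$. Hence every nontrivial module of $T$ is a nontrivial module of $T'$, every co-module of $T$ is a co-module of $T'$, and every co-modular decomposition of $T$ is a co-modular decomposition of $T'$. Taking maxima gives the inequality.

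With this in hand, the argument is short. Let $T$ be any tournament with $n \geq 3$ vertices. If $T$ is transitive, then $T \simeq \underline{n}$ and $\Delta(T) = \lceil (n+1)/2 \rceil$ by Proposition~\ref{Deltatr}. Otherwise, Theorem~\ref{erdos} furnishes a transitive tournament $T'$ with $V(T') = V(T)$ and $\mathcal{M}(T) \varsubsetneq \mathcal{M}(T')$; then $T' \simeq \underline{n}$, and by the monotonicity observation together with Proposition~\ref{Deltatr},
\[
\Delta(T) \leq \Delta(T') = \Delta(\underline{n}) = \left\lceil \tfrac{n+1}{2}\right\rceil.
\]
In either case $\Delta(T) \leq \lceil (n+1)/2 \rceil$, so $\Delta(n) \leq \lceil (n+1)/2 \rceil$, and combined with the lower bound we obtain the equality.

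There is no real obstacle here: the only point that needs to be stated carefully is the monotonicity of $\Delta$ with respect to the set of modules, which is where Theorem~\ref{erdos} plugs in to transfer the bound from arbitrary tournaments to the transitive one already handled in Proposition~\ref{Deltatr}.
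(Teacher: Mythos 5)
Your proof is correct and follows essentially the same route as the paper: the lower bound from Proposition~\ref{Deltatr}, and the upper bound by transferring co-modular decompositions to a transitive tournament via Theorem~\ref{erdos} and the monotonicity of $\Delta$ under inclusion of module sets. Your explicit handling of the transitive case separately (since Theorem~\ref{erdos} is stated for non-transitive tournaments) is a minor point of extra care, not a different argument.
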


\begin{proof}
 Let $n \geq 3$. By Proposition~\ref{Deltatr}, it suffices to prove that $\Delta(n) \leq \left\lceil \frac{n+1}{2} \right\rceil$. Let $T$ be a tournament with $n$ vertices. By Theorem~\ref{erdos}, there exists a transitive tournament $T_n$ such that $V(T_n) = V(T)$ and $\mathcal{M}(T) \subseteq \mathcal{M}(T_n)$. Thus, every co-modular decomposition of $T$ is also a co-modular decomposition of $T_n$. It follows that $\Delta(T) \leq \Delta(T_n)$.  
 Since $\Delta(T_n) = \left\lceil \frac{n+1}{2} \right\rceil$ by Proposition~\ref{Deltatr}, we obtain $\Delta(n) \leq \left\lceil \frac{n+1}{2} \right\rceil$ as desired.         
\end{proof}

We will now see how the co-modular index is closely related to the decomposability one.

\begin{Notation} \normalfont
 Let $T$ be a tournament. For an arc $a = (x,y) \in A(T)$, the vertex set $\{x,y\}$ is denoted by $\mathcal{V}(a)$. Similarly, for an arc set $B \subseteq A(T)$, the vertex set $\displaystyle\bigcup_{b \in B} \mathcal{V}(b)$ is denoted by $\mathcal{V}(B)$. 
\end{Notation}

Let $T$ be a (decomposable) tournament. Let $B \subseteq A(T)$ and let $D$ be a co-modular decomposition of $T$. If $\mathcal{V}(B) \cap M = \varnothing$ for some $M \in D$, then $M$ is still a co-module of ${\rm Inv}(T, B)$, and in particular ${\rm Inv}(T, B)$ is still decomposable. Therefore, if ${\rm Inv}(T, B)$ is indecomposable, then $\mathcal{V}(B) \cap M \neq \varnothing$ for every $M \in D$, so that $|\mathcal{V}(B)| \geq |D|$ and thus $|B| \geq \frac{|\mathcal{V}(B)|}{2} \geq \frac{|D|}{2}$. It follows that when $v(T) \geq 5$, we have $\delta(T) \geq \frac{|D|}{2}$ and thus $\delta(T) \geq \left\lceil \frac{|D|}{2} \right\rceil$. We have shown that 
\begin{equation} \label{eq deltaDelta}
\delta(T) \geq \left\lceil \frac{\Delta(T)}{2} \right\rceil \ \text{for every tournament $T$ with at least five vertices.} 
\end{equation}
The most important result of the paper is certainly that equality holds in (\ref{eq deltaDelta}).

\begin{thm} \label{main}
 For every tournament $T$ with at least five vertices, we have $\delta(T) = \left\lceil \frac{\Delta(T)}{2} \right\rceil$. 
\end{thm}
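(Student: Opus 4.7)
The plan is to prove the nontrivial inequality $\delta(T) \leq \lceil \Delta(T)/2 \rceil$, as (\ref{eq deltaDelta}) already supplies the reverse bound. If $T$ is indecomposable, both quantities vanish by (\ref{eq indec delta}), so I may assume $T$ is decomposable and fix a $\Delta$-decomposition $D = \{M_1, \dots, M_k\}$ with $k = \Delta(T) \geq 2$. The aim is to exhibit $B \subseteq A(T)$ of size $\lceil k/2 \rceil$ such that ${\rm Inv}(T,B)$ is indecomposable.

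First I would linearly arrange the members of $D$. Invoking Theorem~\ref{erdos}, I pick a transitive tournament $T_n$ on $V(T)$ with $\mathcal{M}(T) \subseteq \mathcal{M}(T_n)$; then every $M_i$ is a co-module of $T_n$ as well, and (\ref{eq mod inter n}) together with the closure of co-modules under complementation (\ref{eq close}) lets me enumerate $M_1, \dots, M_k$ as consecutive intervals of the natural total order on $V(T_n)$, replacing some $M_i$ by $\overline{M_i}$ when necessary.

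Next I pair consecutive co-modules into blocks $M_{2j-1} \cup M_{2j}$, with an isolated remainder absorbed into a neighbouring block when $k$ is odd. For each block I select a single arc $a_j \in A(T)$ with one endpoint in $M_{2j-1}$ and the other in $M_{2j}$, and set $B = \{a_j\}$ of cardinality $\lceil k/2 \rceil$. A single reversal of $a_j$ kills both $M_{2j-1}$ and $M_{2j}$ as modules of $T$, and hence, by (\ref{eq close}), as co-modules: any vertex outside the relevant $M_i$ that formerly saw the two endpoints of $a_j$ uniformly now distinguishes them.

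The main obstacle is the third step: proving that ${\rm Inv}(T,B)$ admits \emph{no} nontrivial module at all. Let $N$ be a hypothetical such module. Arcs outside $\mathcal{V}(B)$ are unchanged, so modularity of $N$ in ${\rm Inv}(T,B)$ translates into strong constraints in $T$. Via a case analysis on $N \cap \mathcal{V}(a_j)$ for each $j$, I would argue that either $N$ (or $\overline{N}$) turns out to be a co-module of $T$ disjoint from some member of $D$, contradicting the maximality $|D| = \Delta(T)$; or else some $a_j$ directly falsifies modularity at an appropriate vertex of $N$ or $\overline{N}$. This casework, leveraging both the interval structure inherited from $T_n$ and the maximality of $D$, is the technical heart of the argument, and is where I expect the difficulty to concentrate: one must refine the choice of each $a_j$ inside its block (e.g.\ picking endpoints at prescribed positions within $M_{2j-1}$ and $M_{2j}$) so that every conceivable pattern of $N$ is precluded. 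The odd-$k$ case will require a separate but analogous local analysis to show that absorbing the leftover co-module into an adjacent block does not cost an extra arc.
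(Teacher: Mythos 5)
Your proposal establishes only what the paper already records as easy: the lower bound is (\ref{eq deltaDelta}), and the observation that one arc with an endpoint in each of two disjoint co-modules destroys both as co-modules of $T$ is essentially Assertion~1 of Lemma~\ref{lem comodules}. The entire difficulty of the theorem lies in the step you defer to ``a case analysis'': showing that ${\rm Inv}(T,B)$ has \emph{no} nontrivial module at all. Destroying the members of one $\Delta$-decomposition does not prevent other co-modules of $T$ from surviving, nor does it prevent reversals from \emph{creating} new modules, and the interval picture you import from Theorem~\ref{erdos} gives no leverage here: after reversing arcs there is no containment between $\mathcal{M}({\rm Inv}(T,B))$ and $\mathcal{M}({\rm Inv}(T_n,B))$, so the transitive envelope says nothing about the modified tournament. (Also, the claim that the $M_i$ can be listed as consecutive intervals ``replacing some $M_i$ by $\overline{M_i}$'' is not coherent as stated, since complementation destroys disjointness; each $M_i$ is merely an interval or the complement of one in $T_n$.)

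Moreover, the missing refinement is not a routine matter of ``picking endpoints at prescribed positions'': arbitrary endpoints inside the paired co-modules genuinely fail. The paper shows (Discussion after Proposition~\ref{prop Delta 2}, via Remark~\ref{rem1}) that when a minimal co-module $M$ is a twin sitting inside a module inducing $\underline{3}$, the reversed arc must meet the specific singleton $\widetilde{M}$, otherwise a twin survives; and even with $x \in \widetilde{M}$ fixed, a single candidate $y \in \widetilde{N}$ may leave the tournament decomposable, which is why the proof of Proposition~\ref{prop Delta 2} needs the iterative sequence argument, and why Proposition~\ref{prop Delta 4} needs the carefully prepared $\delta$-decompositions of Proposition~\ref{propo Delta 234} (minimal co-modules, the bound $o_T(M)\leq 2$, twins and transitive components). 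The paper also does not reverse all $\lceil \Delta(T)/2\rceil$ arcs at once from one decomposition; it proceeds by induction on $\Delta(T)$, reversing one well-chosen arc so that $\Delta$ drops by exactly $2$ (Proposition~\ref{prop Delta 4}), with the base cases $\Delta(T)\leq 3$ handled separately by Propositions~\ref{prop Delta 2} and \ref{prop Delta 3}. In short, what you have written is a plan whose decisive step — precisely the content of Sections~\ref{sections mod co-mod}--\ref{section proofs} — is absent, so the proposal does not constitute a proof.
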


We will now see how the main problems posed in \cite{Index} follow from Theorem~\ref{main}.
We begin by Theorem~\ref{deltan}, which is an immediate consequence of Theorems~\ref{main} and \ref{Deltan}.

\begin{proof}[Proof of Theorem \ref{deltan}.]
 Let $n \geq 5$. By Theorem~\ref{main}, $\delta(n) = \left\lceil \frac{\Delta(n)}{2} \right\rceil$. By Theorem~\ref{Deltan}, $\Delta(n) = \left\lceil \frac{n+1}{2} \right\rceil$. It follows that $\delta(n) = \left\lceil \frac{\left\lceil \frac{n+1}{2} \right\rceil}{2} \right\rceil = \left\lceil \frac{n+1}{4} \right\rceil$.  
\end{proof}

The next result, also conjectured in \cite{Index}, is another consequence of Theorem~\ref{main}.

\begin{cor} \label{modinclu}
 Given two tournaments $T$ and $T'$ such that $V(T) = V(T')$ and $v(T) \geq 5$, if $\mathcal{M}(T) \subseteq  \mathcal{M}(T')$, then $\delta(T) \leq \delta(T')$. 
\end{cor}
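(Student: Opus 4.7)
The plan is to combine two simple ingredients: monotonicity of the co-modular index $\Delta$ in the module set, and Theorem~\ref{main}, which converts between $\Delta$ and $\delta$.

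First I would show that every co-module of $T$ is a co-module of $T'$. Since $V(T) = V(T')$, the trivial modules of $T$ and of $T'$ coincide (they are $\varnothing$, the singletons, and $V(T)$). So if $M$ or $\overline{M}$ is a nontrivial module of $T$, then by the hypothesis $\mathcal{M}(T) \subseteq \mathcal{M}(T')$ it belongs to $\mathcal{M}(T')$ and it still fails to be trivial, so it is a nontrivial module of $T'$ as well. Hence co-modularity is preserved when passing from $T$ to $T'$, every co-modular decomposition of $T$ is a co-modular decomposition of $T'$, and applying this to a $\Delta$-decomposition of $T$ gives $\Delta(T) \leq \Delta(T')$.

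Finally, since $v(T') = v(T) \geq 5$, Theorem~\ref{main} applies to both tournaments. Combining it with the inequality above and monotonicity of the ceiling function, I would conclude
$$\delta(T) = \left\lceil \frac{\Delta(T)}{2} \right\rceil \leq \left\lceil \frac{\Delta(T')}{2} \right\rceil = \delta(T'),$$
as required. There is no genuine obstacle here: all the depth of the corollary has been absorbed into Theorem~\ref{main}, and what remains is only the transparent monotonicity observation on co-modular decompositions.
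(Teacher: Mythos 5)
Your proposal is correct and follows the paper's own argument exactly: co-modular decompositions of $T$ are co-modular decompositions of $T'$ (so $\Delta(T)\leq\Delta(T')$), and Theorem~\ref{main} transfers this to $\delta(T)\leq\delta(T')$. Your added remark that trivial modules coincide because $V(T)=V(T')$ is a fine explicit justification of a step the paper leaves implicit.
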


\begin{proof}
 Suppose $\mathcal{M}(T) \subseteq  \mathcal{M}(T')$. Since $V(T) = V(T')$ and $\mathcal{M}(T) \subseteq  \mathcal{M}(T')$, every co-modular decomposition of $T$ is also a co-modular decomposition of $T'$. It follows that $\Delta(T) \leq \Delta(T')$. Thus $\delta(T) \leq \delta(T')$ by Theorem~\ref{main}. 
\end{proof}

Notice that Theorem~\ref{deltan} is also an immediate consequence of Corollary~\ref{modinclu}, Theorem~\ref{erdos}, and Proposition~\ref{deltatr}.

Another application of Theorem~\ref{main} is about upward hereditary properties of the decomposability index based on the following question asked in \cite{Index}. 
For which values of the positive integer $k$ does the following property ($P_{k}$) hold?
\begin{center} \label{P_k}
($P_{k}$) \ For every tournament $T$ such that $v(T) \geq 5+k$, there exists a subset $X$ of $V(T)$ such that $|X|=k$ and $\delta(T) \leq \delta(T-X) +1$. 
\end{center}
As observed in \cite{Index}, Property ($P_{k}$) is false for $k \geq 5$. However, the trueness of Property ($P_{k}$) has been proved for $k \in \{1,2,3\}$, while Property ($P_{4}$) has been conjectured because it implies Theorem~\ref{deltan} (see \cite{Index}). Property ($P_{4}$) is a consequence of Theorem~\ref{main}. In fact, for each $k \in \{1,2,3,4\}$, Property ($P_{k}$) is a consequence of Theorem~\ref{main}. 
More precisely, let us consider the analogues ($Q_{k}$) of Properties ($P_{k}$) for the co-modular index:  
for which values of the positive integer $k$ does the following property ($Q_{k}$) hold?
\begin{center}
($Q_{k}$) \ For every tournament $T$ such that $v(T) \geq 3+k$, there exists a subset $X$ of $V(T)$ such that $|X|=k$ and $\Delta(T) \leq \Delta(T-X) +2$. 
\end{center}
By Theorem~\ref{main}, ($Q_{k}$) implies ($P_{k}$). Since ($P_{k}$) is false for $k \geq 5$, ($Q_{k}$) is also false for $k \geq 5$. By using Theorem~\ref{main}, we prove that for every $k \in \{1,2,3,4\}$, Property ($Q_k$), and thus Property ($P_k$), holds. We obtain the following theorem.

\begin{thm} \label{thmP}
 For every integer $k \in \{1,2,3,4\}$, the following two assertions are satisfied.
 \begin{enumerate}
 \item For every tournament $T$ such that $v(T) \geq 3+k$, there exists a subset $X$ of $V(T)$ such that $|X| = k$ and $\Delta(T) \leq \Delta(T-X) +2$.
 \item For every tournament $T$ such that $v(T) \geq 5+k$, there exists a subset $X$ of $V(T)$ such that $|X| = k$ and $\delta(T) \leq \delta(T-X) +1$.
 \end{enumerate}
\end{thm}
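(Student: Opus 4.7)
My first step is to reduce assertion (2) to assertion (1) using Theorem~\ref{main}. Given a tournament $T$ with $v(T) \geq 5+k$ and the subset $X$ provided by (1), we have $v(T-X) = v(T) - k \geq 5$; Theorem~\ref{main} therefore applies to both $T$ and $T-X$, yielding
\[
\delta(T) \;=\; \left\lceil \frac{\Delta(T)}{2} \right\rceil \;\leq\; \left\lceil \frac{\Delta(T-X)+2}{2} \right\rceil \;=\; \left\lceil \frac{\Delta(T-X)}{2} \right\rceil + 1 \;=\; \delta(T-X)+1,
\]
which is exactly assertion (2). The remainder of the work focuses on assertion (1).

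To prove (1), I fix a $\Delta$-decomposition $D = \{M_1, \ldots, M_\Delta\}$ of $T$, where $\Delta = \Delta(T)$, and aim to construct $X$ of size $k$ intersecting $D$ minimally, so that at least $\Delta - 2$ of the $M_i$'s survive as pairwise disjoint co-modules of $T-X$. The strategy relies on two preliminary observations. First, a \emph{structural lemma}: any co-modular decomposition of $T$ contains at most two singletons, since a singleton $\{u\}$ is a co-module only when $\overline{\{u\}}$ is a nontrivial module of $T$, which forces $u$ to be the unique dominant or the unique dominated vertex of $T$. Second, a \emph{persistence lemma}: if $M_j \in D$ is disjoint from $X$, then $M_j$ remains a co-module of $T-X$, because a module of $T$ restricts to a module of $T-X$ and the required nontriviality follows from cardinality inequalities guaranteed by the hypothesis $v(T) \geq 3+k$.

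Given these lemmas, the construction splits by $\Delta = \Delta(T)$. The cases $\Delta \in \{0,2\}$ are immediate since $\Delta(T) - 2 \leq 0$. For $\Delta \geq 4$, the structural lemma guarantees at least two $M_i$'s with $|M_i| \geq 2$, so their union contains a set of size $k \leq 4$; I take this set as $X$, and the remaining $\Delta - 2 \geq 2$ co-modules persist in $T-X$ by the persistence lemma. The substantive case is $\Delta = 3$, further split by the number $s \in \{0,1,2\}$ of singleton $M_i$'s. For $s \leq 1$ there are two non-singleton co-modules whose union absorbs $X$; for $s = 2$ with the unique non-singleton $M_3$ of size $\geq 3$, the same idea works. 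The only delicate situation is $s = 2$, $|M_3| = 2$, $k = 4$, in which $|\bigcup_i M_i| = 4$ is minimal.

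This last sub-case is the main obstacle I expect. Here $v(T) \geq 7$ gives at least three vertices outside $\bigcup_i M_i$, and I plan to take $X$ to consist of one of the two singleton vertices together with three such outside vertices. The remaining singleton and $M_3$ then form a co-modular decomposition of $T-X$ of size $2$, and checking that the surviving singleton $\{s\}$ is still a co-module reduces to $|V(T-X) \setminus \{s\}| \geq 2$, i.e., $v(T) \geq 7 = 3+k$. This tightness is exactly what explains why the analogous scheme must fail for $k \geq 5$, consistent with the known falsity of ($Q_k$) there.
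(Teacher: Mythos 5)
Your reduction of Assertion~2 to Assertion~1 via Theorem~\ref{main} is exactly the paper's, and your treatment of $\Delta(T)\le 2$ is fine. The gap is in Assertion~1, and it is concentrated in your ``persistence lemma'' and in the case $\Delta(T)=3$ (and, less fatally, $\Delta(T)=4$). Persistence is simply false as you state it: a co-module of $T$ disjoint from $X$ need not remain a co-module of $T-X$, and the hypothesis $v(T)\ge 3+k$ does not supply the missing nontriviality. Concretely, take $k=4$ and let $T$ be the $7$-vertex tournament with $V(T)=M_1\cup M_2\cup M_3$, $|M_1|=|M_2|=2$, $T[M_3]\simeq C_3$, and $T(M_1,M_2)=T(M_2,M_3)=T(M_3,M_1)=1$. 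Then $\{M_1,M_2,M_3\}$ is a $\Delta$-decomposition, $\Delta(T)=3$, $s=0$, and $v(T)=3+k$ exactly. Your recipe for $s\le 1$ (``take $X$ inside the union of two non-singleton co-modules'') allows $X=M_1\cup M_2$, giving $T-X\simeq C_3$, which is indecomposable, so $\Delta(T-X)=0<\Delta(T)-2$; the intended survivor $M_3$ is swallowed, being all of $V(T-X)$. So the construction, as described, fails, and the cardinality bound you invoke is tight in precisely the bad configuration. A similar issue lurks at $\Delta(T)=4$: if a surviving element is a co-module only because its complement is a nontrivial module, survival needs Assertion~2 of Lemma~\ref{comod part} (at most one element of $D$ is not a module of $T$), not cardinality counting; there the conclusion can be patched, but your justification does not cover it.

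This is exactly why the paper splits differently: for $\Delta(T)\in\{3,4\}$ it does not try to preserve whole elements of $D$ at all, but only a \emph{fragment} of one nontrivial module --- fix distinct $x,y\in M$ and $z\in\overline{M}$ for some nontrivial module $M$, and choose $X\subseteq V(T)\setminus\{x,y,z\}$ with $|X|=k$ (possible since $v(T)\ge 3+k$); then $M\setminus X$ is a nontrivial module of $T-X$ by Assertion~1 of Proposition~\ref{propo modules}, so $\Delta(T-X)\ge 2\ge\Delta(T)-2$. The ``delete $X$ from the union of two non-singleton elements of a $\Delta$-decomposition'' argument is reserved for $\Delta(T)\ge 5$, where the at least three surviving pairwise disjoint co-modules automatically supply the nontriviality that your persistence lemma takes for granted. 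If you rework your Case $\Delta=3$ (and justify $\Delta=4$) along these lines --- either by always keeping two vertices of a module together with a vertex of its complement, or by restricting the two-big-elements trick to $\Delta\ge 5$ --- the proof goes through; as written, it does not.
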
 

\begin{proof}
 Let $k \in \{1,2,3,4\}$. As observed above, by  Theorem~\ref{main}, the first assertion implies the second one. Therefore, we only have to prove the first assertion. Assertion~1 clearly holds when $\Delta(T) \leq 2$. Let $T$ be a tournament such that $v(T) \geq 3+k$. 
 
To begin, suppose $\Delta(T) =3$ or $4$. Since $T$ is decomposable, $T$ admits a nontrivial module $M$. Let $x,y$ be distinct elements of $M$, let $z$ be an element of $\overline{M}$, and let $X$ be a subset of $V(T) \setminus \{x,y,z\}$ such that $|X|=k$. Since $M \setminus X = M \cap \overline{X}$ is a nontrivial module of $T[\overline{X}]=T-X$ (see Assertion~1 of Proposition~\ref{propo modules}), the tournament $T-X$ is decomposable, i.e., $\Delta(T-X) \geq 2$ (see (\ref{eq dec delta})). Thus $\Delta(T) \leq \Delta(T-X) +2$ as desired. 
 
 To finish, suppose $\Delta(T) \geq 5$. Let $D$ be a $\Delta$-decomposition of $T$. There exist two distinct elements $M$ and $N$ of $D$ such that $|M| \geq 2$ and $|N| \geq 2$ (see (\ref{eq comod non vide}) and Assertion~1 of Lemma~\ref{comod part}). Let $X$ be a subset of $M \cup N$ such that $|X| = k$. Since $|D| = \Delta(T) \geq 5$, the elements of $D \setminus \{M,N\}$ are co-modules of $T-X$ (see Assertion~1 of Proposition~\ref{propo modules}). Thus, $D \setminus \{M,N\}$ is a co-modular decomposition of $T-X$.  Since $|D \setminus \{M,N\}| = \Delta(T)-2$ because $|D| = \Delta(T)$, it follows that $\Delta(T) \leq \Delta(T-X)+2$, which completes the proof.    
\end{proof}

We end this section by showing how Theorem~\ref{main} results from the following propositions.

\begin{prop} \label{prop Delta 2}
 Given a tournament $T$ with at least five vertices,  $\delta(T) =1$ if and only if $\Delta(T) = 2$.
\end{prop}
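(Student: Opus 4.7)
The backward implication is immediate from results already at hand. If $\delta(T) = 1$, then $T$ is not indecomposable (otherwise $\delta(T) = 0$ by (\ref{eq indec delta})); hence $T$ is decomposable and $\Delta(T) \geq 2$ by (\ref{eq dec delta}). On the other hand, the bound (\ref{eq deltaDelta}) gives $1 = \delta(T) \geq \lceil \Delta(T)/2 \rceil$, which forces $\Delta(T) \leq 2$. Combining the two yields $\Delta(T) = 2$.

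The substantive content is the converse. Assume $\Delta(T) = 2$; then $T$ is decomposable, so $\delta(T) \geq 1$, and the remaining task is to exhibit a single arc $a \in A(T)$ such that ${\rm Inv}(T, a)$ is indecomposable. The plan is first to select a nontrivial module $M$ of $T$ of minimum size, which forces $T[M]$ to be indecomposable because any nontrivial module of $T[M]$ would be a strictly smaller nontrivial module of $T$. I would then exploit the hypothesis $\Delta(T) = 2$ to pin down the rest of the module lattice of $T$: any nontrivial module $N$ of $T$ distinct from $M$ and $\overline{M}$ falls into one of a few configurations relative to $M$ (contained in $\overline{M}$, containing $M$, or overlapping $M$), and in each such configuration, combined with the standard closure properties of modules under intersection and union, I expect to produce three pairwise disjoint co-modules of $T$, contradicting $\Delta(T) = 2$. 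The outcome of this analysis should be that, up to duality and the possible adjunction of a singleton, the only nontrivial modules of $T$ are $M$, $\overline{M}$ and a few closely related sets, and that the modular interaction between $M$ and $\overline{M}$ is highly restricted.

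With the structure of $T$ so constrained, I would choose an arc $a = (x, y)$ with $x \in M$ and $y \in \overline{M}$, picking $y$ using the structural information obtained above so as to block the formation of new modules. The reversal destroys $M$ as a module because $x$ and any other vertex of $M$ then disagree on $y$. The main obstacle is to verify that no new nontrivial module is created in ${\rm Inv}(T, a)$: any such module $N'$ either avoids both $x$ and $y$, in which case $N'$ is already a module of $T$ and is excluded by the structural analysis, or contains exactly one of $x, y$; in the latter situation the module condition of $N'$ in ${\rm Inv}(T, a)$ can be translated back to $T$ and should produce either a third disjoint co-module of $T$ or a nontrivial module of $T$ strictly smaller than $M$, both forbidden. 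Organizing this verification uniformly, and making a single arc suffice in every structural case that $\Delta(T) = 2$ permits, is where the bulk of the work will lie.
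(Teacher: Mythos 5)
Your backward implication is fine and matches the paper. The forward implication, however, is only a plan, and it defers the entire difficulty to two steps that do not work as described. First, choosing an arbitrary $x\in M$ for a minimum-size nontrivial module $M$ already fails on concrete examples with $\Delta(T)=2$: take $C_3$ and substitute one vertex by $\underline{3}$ with vertices $h_0\rightarrow h_1\rightarrow h_2$. Here $\Delta(T)=2$, the minimum nontrivial modules are the twins $\{h_0,h_1\}$ and $\{h_1,h_2\}$, and if you pick $M=\{h_0,h_1\}$ and $x=h_0$, then \emph{every} single-arc reversal at $x$ leaves either $\{h_1,h_2\}$ or $\{h_0,h_1,h_2\}$ as a nontrivial module, no matter how cleverly $y$ is chosen; the only usable endpoint is the overlap vertex $h_1$. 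This is precisely why the paper works with minimal co-modules, the quantity $o_T(\cdot)$, and the set $\widetilde{M}$ of Notation~\ref{Notation M} (see Remark~\ref{rem1}), rather than with a minimum-size module and an arbitrary vertex in it.

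Second, and more seriously, your verification step assumes that a failed reversal ``should produce either a third disjoint co-module of $T$ or a nontrivial module of $T$ strictly smaller than $M$.'' This dichotomy is false: the paper's Lemma~\ref{lem2 Del2} shows that, when every vertex-deleted subtournament is decomposable, a failed reversal of $\{x,y\}$ typically creates a twin $\{x,z\}$ of ${\rm Inv}(T,\{x,y\})$ with $z\in N\setminus\{y\}$, which is neither a co-module nor a module of $T$, so no contradiction is available from a single choice of $y$. The actual proof must (i) first dispose of the case where some $T-v$ is indecomposable via Lemma~\ref{lem fond} imported from \cite{Index} (your case analysis cannot exclude the new module being $\overline{\{x\}}$ or $\overline{\{y\}}$ without this), and (ii) assume all choices $y\in\widetilde{N}$ fail, exploit the uniqueness of the created twin to build a sequence $z_0,z_1,\ldots$ in $N$, and use a finiteness/cycling argument to manufacture a nontrivial module of $T$ strictly contained in the minimal co-module $N$, contradicting minimality of $N$ (not of $M$). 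None of this global, iterative mechanism is present in your sketch, so the core of the proposition remains unproved.
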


\begin{prop} \label{prop Delta 3}
 Given a tournament $T$ with at least five vertices, if $\Delta(T) = 3$, then $\delta(T) = 2$.
\end{prop}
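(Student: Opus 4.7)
The lower bound $\delta(T) \geq 2$ is immediate from (\ref{eq deltaDelta}), since $\lceil 3/2 \rceil = 2$. The task is therefore to produce a set $B \subseteq A(T)$ of two arcs such that $\text{Inv}(T, B)$ is indecomposable. I would fix a $\Delta$-decomposition $D = \{M_1, M_2, M_3\}$ of $T$ and write $R = V(T) \setminus (M_1 \cup M_2 \cup M_3)$. The key structural observation is that, because $D$ is maximum, no nontrivial module $N$ of $T$ can be entirely contained in $R$ (otherwise $\{M_1, M_2, M_3, N\}$ would be a larger co-modular decomposition), and by complementation no such $N$ can contain $M_1 \cup M_2 \cup M_3$ either. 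Thus every nontrivial module of $T$ meets some $M_i$ nontrivially while failing to contain some $M_j$, so its interaction with the blocks is strongly constrained. Using (\ref{eq close}) to replace $M_i$ by $\overline{M_i}$ when needed, I would further normalize so that, whenever possible, $M_i$ itself is the nontrivial module of $T$; singletons $M_i$, which force $\overline{M_i}$ to be the module, are flagged for separate treatment.

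The construction of $B$ is then driven by the goal that $\{a_1, a_2\}$ touches every $M_i$: for instance, $a_1$ joins $M_1$ to $M_2$ and $a_2$ joins $M_2$ (or $M_1$) to $M_3$, with a fallback choice inside $R$ when convenient. By design, none of $M_1, M_2, M_3$, nor any of their complements, can remain a module in $\text{Inv}(T, B)$. For any other nontrivial module $N$ of $T$, the structural constraint above puts $N$ into one of a short list of shapes (either $N$ is properly contained in some $M_i$, or $N$ is contained in $M_i \cup M_j$ and meets both, etc.). Each of these shapes can be knocked out by choosing the endpoints of $a_1, a_2$ appropriately inside the $M_i$'s, with the freedom supplied by any $M_i$ of size at least $2$.

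The main obstacle is twofold. First, the reversal must not accidentally create a new nontrivial module in $\text{Inv}(T, B)$; I would control this by verifying, in each shape of $D$, that the candidate pair $(a_1, a_2)$ does not produce coincidental symmetries in the modified arc pattern, leaning on the single-arc reversal lemmas developed for Proposition~\ref{prop Delta 2}. Second, when some $M_i$ is a singleton (so only one endpoint choice is available in $M_i$), the reduced flexibility makes it harder to kill cross-modules; here the enforced rigidity of $T$ — a module $\overline{M_i}$ of size at least $4$ that itself contains two disjoint co-modules — leaves only a few configurations, which can be handled case by case, invoking Theorem~\ref{erdos} to align $T$ with a transitive reference when useful. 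Collecting the cases yields two explicit arcs whose joint reversal destroys every module of $T$ without producing a new one, establishing $\delta(T) \leq 2$.
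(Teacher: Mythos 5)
Your lower bound is fine, but the upper bound is a plan rather than a proof, and the plan is missing precisely the ingredients that make the statement hard. The crux is not that every nontrivial module of $T$ meets the blocks of a $\Delta$-decomposition (that much follows from maximality and Assertion~3 of Lemma~\ref{comod part}); the crux is that after reversing two arcs you must rule out \emph{every} nontrivial module of ${\rm Inv}(T,B)$, including modules that did not exist in $T$. Your proposal acknowledges this but offers no mechanism beyond ``verify in each shape'' and ``lean on the single-arc reversal lemmas developed for Proposition~\ref{prop Delta 2}''. Those lemmas (Lemma~\ref{lem Delta=2}, Lemma~\ref{lem2 Del2}) are tailored to a two-element $\delta$-decomposition of a tournament with $\Delta=2$ and to a \emph{single} arc reversal; they say nothing about modules of ${\rm Inv}(T,B)$ when $|B|=2$, and Lemma~\ref{lem comodules} only controls one reversal at a time. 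Moreover, an arbitrary $\Delta$-decomposition is not enough: the paper needs a $\delta$-decomposition whose elements are minimal co-modules with overlap number $o_T\leq 1$ (Assertion~2 of Proposition~\ref{propo Delta 234}) and must pick the endpoints inside the sets $\widetilde{M}$, $\widetilde{N}$ of Notation~\ref{Notation M}; without this, newly created twins and surviving overlapping minimal co-modules cannot be excluded. Two smaller defects: with only two arcs and three pairwise disjoint co-modules to hit, a ``fallback arc inside $R$'' is never available (its vertex set would miss one $M_i$, so ${\rm Inv}(T,B)$ stays decomposable); and Theorem~\ref{erdos} only \emph{adds} modules, so it cannot help certify that a specific reversal destroys all of them.

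The paper's route is genuinely different and avoids the two-arc case analysis altogether: it reverses a single carefully chosen arc $\{x,y\}$ with $x\in\widetilde{M}$, $y\in\widetilde{N}$ taken from the special $\delta$-decomposition $\{M,N,L\}$ supplied by Proposition~\ref{propo Delta 234}, and proves, via Lemma~\ref{lem config} and the overlap structure of minimal co-modules, that the resulting tournament $T'$ satisfies $\Delta(T')=2$; then $\delta(T')=1$ by Proposition~\ref{prop Delta 2}, giving $\delta(T)\leq 2$. Note that Proposition~\ref{prop Delta 2} itself already contains a delicate argument (the hypothesis~(\ref{eq H}), Lemma~\ref{lem2 Del2}, and the cyclic sequence of twins) exactly to handle the creation of new modules by a reversal; your sketch implicitly needs an analogue of all of that for two simultaneous reversals, and does not supply it. As written, ``collecting the cases yields two explicit arcs'' is an assertion of the theorem, not an argument for it.
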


\begin{prop} \label{prop Delta 4}
 Given a tournament $T$ such that $\Delta(T) \geq 4$, there exists an arc $a \in A(T)$ such that $\Delta({\rm Inv}(T,a)) = \Delta(T) - 2$.
\end{prop}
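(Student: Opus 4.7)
The plan is to fix a $\Delta$-decomposition $D$ of $T$ with $|D| = \Delta(T) \geq 4$ and to exhibit an arc $a = (x,y) \in A(T)$ whose endpoints $x \in M$ and $y \in N$ lie in two distinct members $M, N$ of $D$, chosen carefully so as to force $\Delta(\mathrm{Inv}(T, a)) = \Delta(T) - 2$. The equality splits into two inequalities; the real work lies in selecting $M$, $N$, and the specific endpoints inside them.

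The inequality $\Delta(\mathrm{Inv}(T, a)) \geq \Delta(T) - 2$ is straightforward and requires no refinement of the choice of endpoints. Indeed, for every $L \in D \setminus \{M, N\}$ we have $\{x,y\} \cap L = \varnothing$; writing $L$ or $\overline{L}$ as a nontrivial module of $T$, the pair $\{x,y\}$ sits entirely outside that module (respectively, entirely inside its complement). Reversing an arc whose endpoints are both outside or both inside a module preserves the module property, so $D \setminus \{M, N\}$ is still a co-modular decomposition of $\mathrm{Inv}(T, a)$, giving the lower bound.

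The reverse inequality $\Delta(\mathrm{Inv}(T, a)) \leq \Delta(T) - 2$ is the heart of the matter and the main obstacle. I would argue by contradiction: assume that $\mathrm{Inv}(T, a)$ admits a co-modular decomposition $D'$ of size at least $\Delta(T) - 1$. Members of $D'$ disjoint from $\{x, y\}$ are still co-modules of $T$, while at most two members of $D'$ meet $\{x, y\}$; these are the potentially ``new'' co-modules that the reversal may have created. A structural dictionary classifies modules of $\mathrm{Inv}(T, a)$: either their intersection with $\{x, y\}$ is $\varnothing$ or $\{x, y\}$ and they are already modules of $T$, or they contain exactly one of $x$ and $y$ and are tightly related to a specific module of $T$ obtained by removing that endpoint. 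Using this dictionary together with the maximality of $|D| = \Delta(T)$, the goal is to recombine the old members of $D'$ with suitable members of $D$ that overlap $\{x, y\}$ in order to produce a co-modular decomposition of $T$ of size strictly greater than $\Delta(T)$, contradicting maximality.

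To make the exchange succeed, the natural heuristic is to select $M$ and $N$ as minimum-size members of $D$ (favouring singletons when $D$ contains any) and to pick $x \in M$, $y \in N$ so that the single arc reversal genuinely disrupts both $M$ and $N$ as co-modules and does not create a compensating new co-module that could play the role of two lost ones. The argument will likely lean on the structural lemmas invoked in the proof of Theorem~\ref{thmP} (notably the lemma on co-modular partitions and the proposition on how modules behave under vertex removal), which capture the combinatorial shape of $\Delta$-decompositions and the transfer of modules under a single arc reversal.
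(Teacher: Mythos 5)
Your easy direction is fine: for any arc $a$ with endpoints in two distinct members of a $\Delta$-decomposition $D$, the remaining members of $D$ survive the reversal, so $\Delta({\rm Inv}(T,a)) \geq \Delta(T)-2$. But the upper bound, which you yourself identify as the heart of the matter, is never proved: you only state a plan (``a structural dictionary \ldots recombine the old members of $D'$ with suitable members of $D$'') and a heuristic for choosing the arc, and that heuristic is in fact wrong. Take $T=\underline{6}$, whose unique $\delta$-decomposition is $\{\{0\},\{5\},\{1,2\},\{3,4\}\}$, so $\Delta(T)=4$. Your rule (favour minimum-size members, i.e.\ the singletons) suggests reversing the arc $(0,5)$; but in ${\rm Inv}(\underline{6},\{0,5\})$ the sets $\{1,2\}$, $\{3,4\}$ are still modules and $\{1,2,3,4\}$ is a module as well, so $\{\{1,2\},\{3,4\},\{0,5\}\}$ is a co-modular decomposition and $\Delta({\rm Inv}(T,a)) \geq 3 > \Delta(T)-2$. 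The danger you mention in passing --- that the reversal ``creates a compensating new co-module'' --- is exactly what happens here, and nothing in your proposal rules it out.

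This is where the paper's machinery is indispensable, and it is absent from your argument. The paper does not reverse an arc between arbitrary (or smallest) members of an arbitrary $\Delta$-decomposition: it first builds, via the theory of minimal co-modules, overlap numbers $o_T$ and transitive components, a $\delta$-decomposition containing four elements $M_1,M_2,M_3,M_4$ with $o_T(M_i)\leq 1$ for $i\in\{1,3,4\}$, $T(M_1,M_2)=T(M_2,M_3)=1$, and a vertex $u\in M_4$ with $T(u,M_1)=1$ or $T(M_3,u)=1$ (Assertion~3 of Proposition~\ref{propo Delta 234}); it then reverses the arc joining carefully chosen vertices $x\in\widetilde{M_1}$, $y\in\widetilde{M_3}$ (Notation~\ref{Notation M}). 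The upper bound is obtained by showing that \emph{no} minimal co-module of $T'={\rm Inv}(T,\{x,y\})$ meets $M_1\cup M_3$, so that any $\delta$-decomposition of $T'$ can be augmented by $M_1$ and $M_3$ inside $T$; proving this uses Lemma~\ref{lem config}, Lemma~\ref{degre G_T}, and all three conditions above (the ``sandwich'' $M_1\rightarrow M_2\rightarrow M_3$ kills candidate modules $I$ meeting both $M_1$ and $M_3$, and the condition on $M_4$ kills the case where $\overline{I}$ is a module). None of this structure --- minimal co-modules, the restriction $o_T(M_i)\leq 1$, the $\widetilde{M}$ construction, or the directional conditions --- appears in your proposal, and without it the exchange argument you sketch cannot be carried out; indeed, as the $\underline{6}$ example shows, for a badly chosen arc the conclusion is simply false.
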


\begin{proof}[Proof of Theorem \ref{main}.]
Let $T$ be a tournament such that $v(T) \geq 5$. We proceed by induction on $\Delta(T)$. By (\ref{eq indec delta}), (\ref{eq dec delta}), and  Propositions~\ref{prop Delta 2} and \ref{prop Delta 3}, the theorem holds when $\Delta(T) \leq 3$. Suppose $\Delta(T) \geq 4$. Since $\delta(T) \geq \left\lceil \frac{\Delta(T)}{2} \right\rceil$ (see (\ref{eq deltaDelta})), it suffices to show that $\delta(T) \leq \left\lceil \frac{\Delta(T)}{2} \right\rceil$.  
By Proposition~\ref{prop Delta 4}, there exists $a \in A(T)$ such that $\Delta({\rm Inv}(T,a)) = \Delta(T) - 2$. By the induction hypothesis, $\delta({\rm Inv}(T,a)) = \left\lceil \frac{\Delta({\rm Inv}(T,a))}{2} \right\rceil = \left\lceil \frac{\Delta(T)-2}{2} \right\rceil$. Since $\delta(T) \leq \delta({\rm Inv}(T,a)) +1$, we obtain $\delta(T) \leq \left\lceil \frac{\Delta(T)-2}{2} \right\rceil +1 = \left\lceil \frac{\Delta(T)}{2} \right\rceil$, as desired.  
\end{proof}

The rest of the paper aims to prove Propositions~\ref{prop Delta 2}, \ref{prop Delta 3} and \ref{prop Delta 4}. It is organized as follows. The next three sections contain the main preliminary results. Section~\ref{sections mod co-mod} contains the basic properties of modules and co-modules. Section~\ref{section mnc} contains a structural study of minimal co-modules. In Section~\ref{section delta decom}, we review some useful results about $\delta$-decompositions, i.e., $\Delta$-decompositions in which every element is a minimal co-module. Section~\ref{section proofs} is divided into two subsections. We prove Proposition~\ref{prop Delta 2} in Subsection~\ref{subsection proof1}, Propositions~\ref{prop Delta 3} and \ref{prop Delta 4} in Subsection~\ref{subsection proof2}. The way of algorithmic considerations is left open.        

\section{Modules and co-modules} \label{sections mod co-mod}

To manipulate modules of tournaments, it is convenient to introduce the following notations. Let $T$ be a tournament. For every distinct vertices $x,y \in V(T)$, we set   
\begin{equation*}
T(x,y)= \
\begin{cases}
1 \ \ \ \ \text{if} \ \  (x,y) \in A(T),\\

0 \ \ \ \   \text{if} \ \ (x,y) \notin A(T).
\end{cases}
\end{equation*} 
 Let $X$ and $Y$ be two disjoint subsets of $V(T)$. The notation $X \equiv_T Y$ signifies that $T(x,y) = T (x',y')$ for every $x, x' \in X$ and $y, y' \in Y$. 
For more precision when $X \equiv_T Y$, we write $T(X,Y) =1$ (resp. $T(X,Y) =0$) to indicate that for every $x \in X$ and $y \in Y$, we have $T(x,y) =1$ (resp. $T(x,y) =0$). When $X$ is a singleton $\{x\}$, we write $x \equiv_T Y$ for $\{x\} \equiv_T Y$, $T(x,Y)$ for $T(\{x\}, Y)$, and $T(Y,x)$ for $T(Y, \{x\})$.    
For instance, given a subset $M$ of $V(T)$, $M$ is a module of $T$ if and only if for every $x \in \overline{M}$, we have $x \equiv_T M$, or, equivalently, if and only if $T(x,u) = T(x,v)$ for every $x \in \overline{M}$ and $u,v \in M$.    

We now review some useful properties of the modules of a tournament. We begin by the following properties  which resemble those of the intervals in a total order. 

\begin{prop} \label{propo modules}
Let $T$ be a tournament.
\begin{enumerate}
\item Given a subset $W$ of $V(T)$, if $M$ is a module of $T$, then $M \cap W$ is a module of $T[W]$.
\item Given a module $M$ of $T$, if $N$ is a module of $T[M]$, then $N$ is also a module of $T$.
 \item If $M$ and $N$ are modules of $T$, then $M \cap N$ is also a module of $T$.
 \item If $M$ and $N$ are modules of $T$ such that $M \cap N \neq \varnothing$, then $M \cup N$ is also a module of $T$.
 \item If $M$ and $N$ are modules of $T$ such that $M \setminus N \neq \varnothing$, then $N \setminus M$ is also a module of $T$.
 \item If $M$ and $N$ are disjoint modules of $T$, then $M \equiv_T N$.
\end{enumerate}
\end{prop}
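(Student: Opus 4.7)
My plan is to verify each of the six assertions directly from the equivalent form of the module definition just recalled: $M$ is a module of $T$ iff $T(x,u) = T(x,v)$ whenever $x \in \overline{M}$ and $u,v \in M$. Assertions (1)--(4) reduce to short case analyses, and I would dispatch them first, leaving (5) and (6) for the end because they need a small additional trick.

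For (1), a vertex of $W \setminus (M \cap W)$ cannot lie in $M$ (otherwise it would belong to $M \cap W$), so the module property of $M$ in $T$ transfers verbatim to $T[W]$. For (2), since $N \subseteq M$, any $x \in V(T) \setminus N$ either lies in $M$, in which case $N$ being a module of $T[M]$ gives the equality, or lies in $\overline{M}$, in which case $M$ being a module of $T$ gives it. For (3), a vertex outside $M \cap N$ lies outside at least one of $M$ and $N$, and the corresponding module property applies to $u,v \in M \cap N$. For (4), a vertex $x$ outside $M \cup N$ sees $M$ uniformly and $N$ uniformly by the two module properties, and any common vertex in $M \cap N$ witnesses that these two uniform values agree; hence $T(x,\cdot)$ is constant on $M \cup N$.

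The genuinely delicate points are (5) and (6). For (5), take $x \in V(T) \setminus (N \setminus M)$ and $u,v \in N \setminus M$. If $x \notin N$, the module property of $N$ settles it at once. Otherwise $x \in N$ and, since $x \notin N \setminus M$, also $x \in M$; I would then introduce a bridging vertex $w \in M \setminus N$ supplied by the hypothesis. The module $M$ forces $T(u,x) = T(u,w)$ and $T(v,x) = T(v,w)$ (since $u,v \notin M$ and $x,w \in M$), while the module $N$ forces $T(w,u) = T(w,v)$ (since $w \notin N$ and $u,v \in N$); chaining these through $T(a,b) = 1 - T(b,a)$ yields $T(x,u) = T(x,v)$. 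For (6), given $x,x' \in M$ and $y,y' \in N$, disjointness places $y \in \overline{M}$ and $x' \in \overline{N}$, so two successive applications of the module properties of $M$ and $N$ give $T(x,y) = T(x',y) = T(x',y')$.

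The main obstacle will be the bookkeeping in (5): one must use the hypothesis $M \setminus N \neq \varnothing$ precisely to supply the bridging vertex $w$, and then carefully track which of $M$ and $N$ is playing the role of the ambient module at each of the three equalities before combining them via the tournament identity. Every other part follows directly from unwinding the definition.
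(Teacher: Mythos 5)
Your proof is correct: all six assertions are verified soundly from the definition, including the delicate bridging-vertex argument in (5) (using $w \in M \setminus N$ together with the identity $T(a,b)=1-T(b,a)$) and the two-step chaining in (6). The paper states this proposition without proof, as a list of standard properties of modules, and your direct verification is exactly the argument it implicitly relies on.
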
   

Now we examine the modules of a tournament $T$ together with those of a tournament $T'$ obtained from $T$ by reversing an arc. We say that two sets $E$ and $F$ {\it overlap} if $E \cap F \neq \varnothing$, $E \setminus F \neq \varnothing$ and $F \setminus E \neq \varnothing$.

\begin{lem} \label{lem comodules}
Given a tournament $T$, consider an arc $a \in A(T)$ and let $T' = {\rm Inv}(T, a)$.   
\begin{enumerate}
 \item Given a module $M$ of $T$, $M$ is a module of $T'$ if and only if $M$ and $\mathcal{V}(a)$ do not overlap. 
\item If $M$ is a module of $T$ and $M'$ is a module of $T'$, then $M \cap M'$ is a module of $T$ or of $T'$. 
 \item Given a module $M$ of $T$ and a module $M'$ of $T'$ such that $M \cap M' \neq \varnothing$, if $M \cup M'$ and $\mathcal{V}(a)$ do not overlap, then $M \cup M'$ is a module of both $T$ and $T'$.
\end{enumerate}
\end{lem}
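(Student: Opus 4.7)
The plan is to treat each assertion by expressing the module condition pointwise ($v \equiv_T W$ for every $v \in \overline{W}$) and splitting on where the endpoints $x,y$ of $a$ lie relative to the relevant sets, exploiting the fact that $T$ and $T'$ agree on every ordered pair except $(x,y)$ and $(y,x)$. For assertion~1, I would observe that $M$ and $\mathcal{V}(a)$ fail to overlap exactly when both endpoints of $a$ lie in $M$, both lie in $\overline{M}$, or $M$ is a singleton contained in $\mathcal{V}(a)$; in each case the reversal of $a$ is either internal to $M$, external to $M$, or trivial, so $M$ remains a module of $T'$. Conversely, if $M$ and $\mathcal{V}(a)$ overlap, say with $x \in M$, $y \notin M$, and $|M| \geq 2$, then $M$ being a module of $T$ forces $T(y,u) = T(y,x) = 0$ for every $u \in M$, whereas after inverting $a$ we have $T'(y,x) = 1$ and $T'(y,u) = 0$ for $u \in M \setminus \{x\}$, which destroys the module property.

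For assertion~2, set $N = M \cap M'$ and classify each $v \in \overline{N}$ as lying in $\overline{M} \cap \overline{M'}$, in $M \setminus M'$, or in $M' \setminus M$. From $v \notin M$ one gets $v \equiv_T M$ and hence $v \equiv_T N$; from $v \notin M'$ one gets $v \equiv_{T'} M'$ and hence $v \equiv_{T'} N$. So the only possible obstruction to $N$ being a module of $T$ is a vertex $v \in M \setminus M'$ with $v \not\equiv_T N$, and symmetrically for $T'$. Suppose such an obstruction exists; since $T$ and $T'$ differ only on $a$, it must be witnessed by some $u \in N$ with $\{v,u\} = \mathcal{V}(a)$, and hence $\mathcal{V}(a) \subseteq M$ with the endpoint of $a$ other than $v$ lying in $N$. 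I would then verify that every $v' \in M' \setminus M$ is distinct from both $x$ and $y$, so its relations to $N$ coincide in $T$ and $T'$; combined with $v' \equiv_T N$ this yields $v' \equiv_{T'} N$, and together with the other two cases it shows that $N$ is a module of $T'$.

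For assertion~3, the non-overlap hypothesis splits into $\mathcal{V}(a) \cap (M \cup M') = \varnothing$, $\mathcal{V}(a) \subseteq M \cup M'$, or $M \cup M' \subseteq \mathcal{V}(a)$. Fixing $w \in M \cap M'$, I would show $v \equiv_T (M \cup M')$ and $v \equiv_{T'} (M \cup M')$ for every $v \notin M \cup M'$ by combining $v \equiv_T M$ (from $v \notin M$) with $v \equiv_{T'} M'$ (from $v \notin M'$) and transferring the latter across the reversed arc. When $v \notin \mathcal{V}(a)$ the transfer is automatic; when $v \in \mathcal{V}(a)$ the non-overlap hypothesis forces us into the disjoint case, where the other endpoint of $a$ lies outside $M \cup M'$, so reversing $a$ does not touch any relation between $v$ and $M \cup M'$. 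The containment case $M \cup M' \subseteq \mathcal{V}(a)$ forces $|M \cup M'| \leq 2$, and each resulting configuration reduces either to the trivial singleton case or to $\{x,y\}$ itself, for which assertion~1 immediately transports the module property between $T$ and $T'$.

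The main technical obstacle is assertion~2, since one must rule out simultaneous failures of the module property in both $T$ and $T'$ rather than in just one tournament. The structural constraint that any offending vertex must be an endpoint of $a$ whose partner lies in $N$ is the linchpin that prevents both failures from coexisting; once it is in place, assertions~1 and~3 reduce to organized case distinctions.
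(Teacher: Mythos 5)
Your proposal is correct, and it reaches the conclusion by a noticeably different route from the paper, above all in Assertion~2. There, the paper splits on whether $M$ overlaps $\mathcal{V}(a)$: if not, Assertion~1 makes $M$ a module of $T'$ and the intersection of two modules of $T'$ finishes the job; if so, the arc $a$ is not internal to $M$, so $T'[M]=T[M]$, and the restriction and transitivity properties of modules (Assertions~1--3 of Proposition~\ref{propo modules}) give that $M\cap M'$ is a module of $T$. You instead argue pointwise: any vertex violating the module condition for $M\cap M'$ in $T$ must lie in $M\setminus M'$ and be an endpoint of $a$ whose partner lies in $M\cap M'$, which places $\mathcal{V}(a)$ inside $M$ and then forces every vertex outside $M\cap M'$ to satisfy the condition in $T'$; this obstruction-localization step is sound and replaces the paper's use of the restriction lemma. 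For Assertion~3 the paper halves the work via the duality $T={\rm Inv}(T',a)$ and splits on whether $M'$ overlaps $\mathcal{V}(a)$ (noting that overlap forces $\mathcal{V}(a)\subseteq M\cup M'$), whereas you verify the module condition in $T$ and $T'$ simultaneously, gluing $v\equiv_T M$ and $v\equiv_{T'}M'$ through a common vertex $w\in M\cap M'$, and you treat the degenerate case $M\cup M'\subseteq\mathcal{V}(a)$ separately, where $M\cap M'\neq\varnothing$ indeed forces one of $M,M'$ to be a singleton or $\mathcal{V}(a)$ itself, so Assertion~1 transports the module property. Both routes are valid; the paper's is shorter because it leans on Proposition~\ref{propo modules} and duality, while yours is more elementary and self-contained.
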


\begin{proof}
The first assertion is obvious because for every distinct $x, y \in V(T)$, $T'(x,y) = T(x,y)$ if and only if $\{x,y\} \neq \mathcal{V}(a)$. Let $M$ be a module of $T$, and let $M'$ be a module of $T'$. 

To prove the second assertion, we first suppose that $M$ and $\mathcal{V}(a)$ do not overlap. By the first assertion, $M$ is also a module of $T'$. Therefore, $M \cap M'$ is a module of $T'$ by Assertion~3 of Proposition~\ref{propo modules}. Now suppose that $M$ and $\mathcal{V}(a)$ overlap. In this instance, $T'[M] = T[M]$. Therefore, $M \cap M'$ is a module of $T[M]$ because $M \cap M'$ is a module of $T'[M]$ by Assertion~1 of Proposition~\ref{propo modules}. By Assertion~2 of Proposition~\ref{propo modules}, $M \cap M'$ is also a module of $T$.

For the third assertion, suppose that $M \cap M' \neq \varnothing$ and that $\mathcal{V}(a)$ do not overlap $M \cup M'$. Since $T' = {\rm Inv}(T, \mathcal{V}(a))$ and $T = {\rm Inv}(T', \mathcal{V}(a))$, we may interchange $T$ and $T'$ so that it suffices to show that $M \cup M'$ is a module of $T$. If $M'$ and $\mathcal{V}(a)$ do not overlap, then since $M'$ is also a module of $T$ by the first assertion of the lemma, $M \cup M'$ is a module of $T$ by Assertion~4 of Proposition~\ref{propo modules}. Suppose that $M'$ and $\mathcal{V}(a)$ overlap. Since $M \cup M'$ and $\mathcal{V}(a)$ do not overlap, then $\mathcal{V}(a) \subseteq M \cup M'$. Let $v \in \overline{M \cup M'}$. 

\begin{equation} \label{eq MM'}
\text{For every} \ u \in M \cup M', \ \text{we have} \ T(v,u) = T'(v,u) 
\end{equation}
because $\{u,v\} \neq \mathcal{V}(a)$.  Let $x, y \in M \cup M'$. Fix $z \in M \cap M'$. Since $M$ and $M'$ are modules of $T$ and $T'$ respectively, it follows from (\ref{eq MM'}) that $T(v,x) = T(v,z)$ and $T(v,y)= T(v,z)$. Thus $T(v,x) = T(v,y)$. Therefore, $M \cup M'$ is a module of $T$.          
\end{proof}

We now review some useful properties of co-modules and co-modular decompositions.

\begin{lem} \label{comod part}
 Given a decomposable tournament $T$, consider a co-modular decomposition $D$ of $T$. The following assertions are satisfied.
 \begin{enumerate}
 \item The tournament $T$ admits at most two singletons which are co-modules of $T$. In particular, $D$ contains at most two singletons. 
  \item If $D$ contains an element $M$ which is not a module of $T$, then the elements of $D \setminus \{M\}$ are nontrivial modules of $T$.
  \item If $D$ is a $\Delta$-decomposition of $T$, then $\cup D$ is never included in a co-module of $T$.
  \item If $D$ is a $\Delta$-decomposition of $T$ and $v(T) \geq 4$, then $D$ contains a nontrivial module of $T$.
 \end{enumerate}
\end{lem}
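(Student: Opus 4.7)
The plan is to take the four assertions in order. Assertion~1 follows from the observation that a singleton $\{x\}$ is itself a trivial module, so $\{x\}$ being a co-module is equivalent to $\overline{\{x\}}$ being a nontrivial module of $T$. Writing out the module condition at the unique outside vertex $x$ then forces $x$ to be uniformly related to every other vertex, i.e., $x$ is a source or a sink of $T$. Since a tournament has at most one source and at most one sink, at most two singletons of $T$ are co-modules, and in particular $D$ contains at most two singletons.

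For Assertion~2, fix $M \in D$ that is not a module of $T$, so that $\overline{M}$ is a nontrivial module of $T$. Given $N \in D \setminus \{M\}$, I would suppose for contradiction that $N$ is not a nontrivial module of $T$; then by the co-module condition $\overline{N}$ is a nontrivial module of $T$. Since $M$ and $N$ are disjoint and $N$ is nonempty (being a co-module), we have $\overline{M} \setminus \overline{N} = N \neq \varnothing$, and Assertion~5 of Proposition~\ref{propo modules} then forces $\overline{N} \setminus \overline{M} = M$ to be a module of $T$, contradicting the hypothesis on $M$. Hence $N$ is a nontrivial module.

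For Assertion~3, suppose $\bigcup D \subseteq C$ for some co-module $C$ of $T$. By (\ref{eq close}), $\overline{C}$ is a co-module; it is nonempty because $V(T)$ is not a co-module by (\ref{eq comod non vide}), and it is disjoint from every element of $D$ since each $M \in D$ lies in $C$. Therefore $D \cup \{\overline{C}\}$ is a co-modular decomposition of $T$ of size $|D|+1$, contradicting $|D| = \Delta(T)$.

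Assertion~4 is then a combination of the previous three. Since $T$ is decomposable, $|D| = \Delta(T) \geq 2$ by (\ref{eq dec delta}). If some element of $D$ is not a module of $T$, Assertion~2 immediately supplies a nontrivial module in $D$; otherwise every element of $D$ is a module, and if any is nontrivial we are done. The only remaining case has every element of $D$ a trivial module, hence a singleton, so by Assertion~1 we have $|D| = 2$ and may write $D = \{\{x\}, \{y\}\}$ where, after possibly swapping, $x$ is a source and $y$ a sink of $T$. When $v(T) \geq 4$, $V(T) \setminus \{x, y\}$ has at least two elements and is a module of $T$ (because $x$ and $y$ are each uniformly related to it), so $\{x, y\}$ is a co-module of $T$ containing $\bigcup D$, contradicting Assertion~3. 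I expect Assertion~2 to be the main obstacle, because one must identify the correct direction in which to apply Assertion~5 of Proposition~\ref{propo modules}; once that step is in hand, the remaining assertions follow cleanly.
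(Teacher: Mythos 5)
Your proof is correct and follows essentially the same route as the paper's: the same overlap-free use of Assertion~5 of Proposition~\ref{propo modules} for Assertion~2, the same augmentation $D \cup \{\overline{C}\}$ for Assertion~3, and the same reduction of Assertion~4 to two co-module singletons contradicting Assertion~3. Your source/sink phrasing in Assertions~1 and~4 is just a reformulation of the paper's direct module computations (e.g.\ the paper intersects $\overline{\{x\}}$ and $\overline{\{y\}}$ via Assertion~3 of Proposition~\ref{propo modules}), so there is no substantive difference.
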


\begin{proof}
For the first assertion, suppose there are distinct $x,y \in V(T)$ such that $\{x\}$ and $\{y\}$ are co-modules of $T$, i.e., $\overline{\{x\}}$ and $\overline{\{y\}}$ are nontrivial modules of $T$. Let $z \in V(T) \setminus \{x,y\}$. We have $T(x,z) = T(x,y)$ and $T(y,x) = T(y,z)$. Thus $T(x,z) \neq T(y,z)$. Therefore, $\{z\}$ is a not a co-module of $T$. Since the elements of $D$ are co-modules of $T$, it follows that $D$ contains at most two singletons. 

For the second assertion, suppose that $D$ contains an element $M$ which is not a module of $T$. Let $N$ be an element of $D \setminus \{M\}$. We have to prove that $N$ is a nontrivial module of $T$. Suppose not. Since $\overline{M}$ and $\overline{N}$ are modules of $T$ and $\overline{M} \setminus \overline{N} =N \neq \varnothing$, then $\overline{N} \setminus \overline{M} =M$ is a module of $T$ by Assertion~5 of Proposition~\ref{propo modules}, a contradiction. Thus, $N$ is a nontrivial module of $T$. 

The third assertion holds because if $\cup D$ is included in a co-module $M$ of $T$, then $D \cup \{\overline{M}\}$ is a co-modular decomposition of $T$ so that $D$ is not a $\Delta$-decomposition of $T$. 

To prove the fourth assertion, suppose that $D$ is a $\Delta$-decomposition of $T$, and that $v(T) \geq 4$. Recall that $|D| = \Delta(T) \geq 2$ because $T$ is decomposable (see (\ref{eq dec delta})). Suppose for a contradiction that $D$ does not contain a nontrivial module of $T$. By the second assertion of the lemma, all the elements of $D$ are trivial modules of $T$. Since the elements of $D$ are co-modules of $T$, it follows that they are singletons (see (\ref{eq comod non vide})). By the first assertion of the lemma, $D = \{\{x\}, \{y\}\}$ for some distinct $x, y \in V(T)$. In particular $\Delta(T) =2$. Since $\overline{\{x\}}$ and $\overline{\{y\}}$ are modules of $T$, then by Assertion~3 of Proposition~\ref{propo modules}, $\overline{\{x\}} \cap \overline{\{y\}}=\overline{\{x,y\}}$ is a module of $T$. Moreover the module  $\overline{\{x,y\}}$ of $T$ is nontrivial because $v(T) \geq 4$. Therefore, $\{x,y\}$ is a co-module of $T$, which contradicts the third assertion of the lemma. Thus, $D$ contains a nontrivial module of $T$. 
\end{proof}

\section{Minimal co-modules} \label{section mnc}

Let $T$ be a tournament. A {\it minimal co-module} of $T$ is a co-module $M$ of $T$ which is minimal in the set of co-modules of $T$ ordered by inclusion, i.e., such that $M$ does not contain any other co-module of $T$.

\begin{Notation} \normalfont
  Given a tournament $T$, the set of minimal co-modules of $T$ is denoted by $\text{mc}(T)$.
\end{Notation}

For example, in the case of transitive tournaments, for every integer $n \geq 3$, we have (see (\ref{eq mod inter n}))
\begin{equation} \label{mnc(n)}
 \text{mc}(\underline{n}) = \{\{0\}, \{n-1\}\} \cup \{\{i,i+1\}: 1 \leq i \leq n-3\}.
\end{equation}

The following remark is the analogue of Assertion~1 of Lemma~\ref{lem comodules} for minimal co-modules.

\begin{rem} \label{rem mc overlap} \normalfont
 Let $T$ be a tournament, let $a \in A(T)$, and let $T' = {\rm Inv}(T, a)$. Given $M \subseteq V(T) \setminus \mathcal{V}(a)$, $M \in \text{mc}(T)$ if and only if $M \in  \text{mc}(T')$.
\end{rem}

Similarly, a {\it minimal nontrivial module} of a tournament $T$ is a nontrivial module of $T$ which is minimal in the set of nontrivial modules of $T$ ordered by inclusion.

\begin{rem} \label{fact1 min mod comod} \normalfont
Given a nontrivial module $M$ of a tournament $T$, if $M$ is a minimal co-module of $T$, then $M$ is a minimal nontrivial module of $T$. 
\end{rem}

The next remark is an immediate consequence of Assertion~2 of Proposition~\ref{propo modules}.

\begin{rem} \label{fact2 min mod indec} \normalfont
 If $M$ is a minimal nontrivial module of a tournament $T$, then $T[M]$ is indecomposable. 
\end{rem}

Given a tournament $T$, the elements of $\text{mc}(T)$ are clearly pairwise incomparable with respect to inclusion. Therefore, given distinct $M, N \in \text{mc}(T)$, either $M \cap N = \varnothing$ or $M$ and $N$ overlap. To study the overlapping case, we need the following notations. 

 \begin{Notation} \label{not oO} \normalfont
  Let $T$ be a tournament, and let $M \in \text{mc}(T)$. The set of the elements $N \in \text{mc}(T)$ that overlap $M$ is denoted by $O_T(M)$, i.e., $O_T(M) = \{N \in \text{mc}(T) : N \ \text{overlaps } \ M\}$. Moreover, we set $o_T(M) = |O_T(M)|$.  
 \end{Notation}
 
 For example, in the case of transitive tournaments, we obtain the following fact.
 
 \begin{Fact} \label{fact ot(n)}
  Consider the transitive tournament $\underline{n}$, where $n \geq 3$. We have ${\rm mc}(\underline{n}) = \{\{0\}, \{n-1\}\} \cup \{\{i,i+1\}: 1 \leq i \leq n-3\}$ (see (\ref{mnc(n)})). Moreover, we have the following.
  \begin{itemize}
   \item $o_{\underline{n}}(\{0\}) = o_{\underline{n}}(\{n-1\}) = 0$.
   \item Suppose $n \geq 5$. We have $o_{\underline{n}}(\{1,2\}) = o_{\underline{n}}(\{n-2,n-3\}) = 1$. More precisely, $O_{\underline{n}}(\{1,2\}) = \{2,3\}$ and $O_{\underline{n}}(\{n-2,n-3\}) = \{n-3,n-4\}$.
   \item Suppose $n \geq 6$, and let $i \in \{2, \ldots, n-4\}$. We have $o_{\underline{n}}(\{i,i+1\}) =2$. More precisely, $O_{\underline{n}}(\{i,i+1\}) = \{\{i-1,i\}, \{i+1,i+2\}\}$.  
  \end{itemize}
\end{Fact}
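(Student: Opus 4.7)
The plan is to verify each assertion by direct inspection of the explicit description of $\text{mc}(\underline{n})$ given in equation (\ref{mnc(n)}). Since that description has already been established, no further structural argument about which subsets are minimal co-modules is required: it remains only to enumerate the overlaps among the sets in the list $\{\{0\}, \{n-1\}\} \cup \{\{j,j+1\} : 1 \leq j \leq n-3\}$.

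First I would dispose of every singleton case with a single observation: a singleton $\{x\}$ cannot overlap any set $E$, since the conjunction $\{x\} \cap E \neq \varnothing$ and $\{x\} \setminus E \neq \varnothing$ is impossible. This immediately yields $o_{\underline{n}}(\{0\}) = o_{\underline{n}}(\{n-1\}) = 0$, and, more usefully, shows that the singletons $\{0\}$ and $\{n-1\}$ never contribute to $O_{\underline{n}}(M)$ for any $M \in \text{mc}(\underline{n})$.

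Next, for $M = \{i,i+1\}$ with $1 \leq i \leq n-3$, the previous observation restricts the elements of $O_{\underline{n}}(M)$ to the 2-element sets $\{j,j+1\}$ with $1 \leq j \leq n-3$ and $j \neq i$. A routine check on the values of $|i-j|$ shows that two such consecutive pairs overlap if and only if $|i-j| = 1$; otherwise they are disjoint. Hence the only candidate overlaps are $\{i-1, i\}$ and $\{i+1, i+2\}$, each subject to the membership constraint $1 \leq j \leq n-3$. For $i = 1$, only $\{2,3\}$ survives the constraint, provided $n - 3 \geq 2$, i.e.\ $n \geq 5$; the case $i = n-3$ is symmetric. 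For $2 \leq i \leq n-4$ (a nonempty range exactly when $n \geq 6$), both neighbors $\{i-1,i\}$ and $\{i+1,i+2\}$ satisfy the constraint, so $o_{\underline{n}}(\{i,i+1\}) = 2$.

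There is no genuine obstacle here; the argument is a finite case analysis on the explicit list (\ref{mnc(n)}). The only point demanding a little care is bookkeeping the index bounds $1 \leq j \leq n-3$ at the two endpoints $i \in \{1, n-3\}$, which is precisely what forces the boundary values $n \geq 5$ and $n \geq 6$ stated in the second and third bullets.
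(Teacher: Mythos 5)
Your verification is correct: since the paper takes the description of ${\rm mc}(\underline{n})$ in (\ref{mnc(n)}) as given, the Fact reduces exactly to the overlap bookkeeping you carry out (singletons never overlap anything, and two pairs $\{i,i+1\}$, $\{j,j+1\}$ from the list overlap precisely when $|i-j|=1$, with the index bounds $1 \leq j \leq n-3$ producing the thresholds $n \geq 5$ and $n \geq 6$). The paper states this Fact without proof as an immediate consequence of (\ref{mnc(n)}), so your routine enumeration is precisely the intended argument.
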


 The next result (see Lemma~\ref{degre G_T}) leads us to distinguish the modules with two vertices as specific modules.

 \begin{defn} \normalfont
  A {\it twin} of a tournament $T$ is a module of cardinality 2 of $T$.
 \end{defn}

 \begin{lem} \label{degre G_T}
Let $T$ be a tournament and let $M \in {\rm mc}(T)$. 
 We have $o_T(M) \leq 2$. Moreover, if $M$ is not a twin of $T$, then $o_T(M) = 0$.
  \end{lem}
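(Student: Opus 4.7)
The plan is to show first that having any overlapping minimal co-module forces $M$ to be a module of $T$ (ruling out the case where only $\overline{M}$ is a nontrivial module), then to deduce that $M$ is in fact a twin, and finally to bound $o_T(M) \leq 2$ via the fact that at most two twins of $T$ pass through any fixed vertex. The main obstacle is the first step: carefully combining the overlap properties of modules with the minimality of $M$ across the several sub-cases determined by whether $M, N$ or their complements are the nontrivial modules witnessing co-modularity.

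For the first step, assume for contradiction that $M$ is not a module, so $\overline{M}$ is a nontrivial module, and let $N \in O_T(M)$. If $N$ is a module, the overlapping modules $\overline{M}$ and $N$ have nonempty intersection $N \setminus M$, so Proposition~\ref{propo modules}(4) gives that $\overline{M} \cup N$ is a module; it contains the nontrivial $\overline{M}$ and its complement $M \setminus N$ is nonempty, so it is a nontrivial module, whence $M \setminus N$ is a co-module strictly contained in $M$, contradicting minimality. If instead $\overline{N}$ is a nontrivial module, either $M \cup N \neq V(T)$, and then $\overline{M}$, $\overline{N}$ are overlapping nontrivial modules, so $\overline{M \cap N} = \overline{M} \cup \overline{N}$ is a nontrivial module whose complement $M \cap N$ is a co-module strictly contained in $M$; or $M \cup N = V(T)$, and then $\overline{N} \subsetneq M$ (strict because $M \cap N \neq \varnothing$), so $\overline{N}$ is itself a nontrivial module properly contained in $M$. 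Every sub-case violates the minimality of $M$, so $M$ must be a module, and by the symmetry of the argument the same holds for $N$.

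With $M$ and $N$ overlapping modules of $T$, Proposition~\ref{propo modules}(3) and (5) show that $M \cap N$ and $M \setminus N$ are both modules of $T$, each a nonempty proper subset of $M$. Minimality of $M$ forbids either from being a nontrivial module, so both are singletons and $|M| = 2$; hence $M$ is a twin, which is the second assertion of the lemma.

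To bound $o_T(M) \leq 2$, write $M = \{x, y\}$. The preceding steps show that every $N \in O_T(M)$ is a twin meeting $M$ in exactly one vertex, hence contains $x$ or $y$. I claim any vertex $w$ lies in at most two twins of $T$. Suppose $\{w, a_1\}, \{w, a_2\}, \{w, a_3\}$ were three distinct twins; by Proposition~\ref{propo modules}(1), each $\{w, a_i\}$ restricts to a module of $T[\{w, a_i, a_j\}]$, so this $3$-tournament is decomposable and therefore isomorphic to $\underline{3}$, with $w$ the middle vertex (since it admits two distinct size-$2$ modules sharing $w$). Orienting so that $a_1 \to w \to a_2$, the middle condition on $T[\{w, a_1, a_3\}]$ forces $w \to a_3$ while the one on $T[\{w, a_2, a_3\}]$ forces $a_3 \to w$, a contradiction. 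Since $M$ is itself a twin through $x$ and through $y$, at most one element of $O_T(M)$ passes through $x$ and at most one through $y$, giving $o_T(M) \leq 2$.
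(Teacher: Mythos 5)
Your proof is correct and follows essentially the same route as the paper: overlap together with minimality of the co-modules forces $M$ (and symmetrically $N$) to be a module, hence a twin, and the bound $o_T(M)\leq 2$ then comes from excluding three twins through a common vertex. The only cosmetic difference is in that last step, where you argue via restrictions to $3$-vertex subtournaments (the shared vertex must be the middle of $\underline{3}$), while the paper derives the same contradiction from the $4$-vertex configuration $\{x,y\},\{y,z\},\{y,t\}$ using unions of intersecting modules.
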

  
 \begin{proof}
 To begin, suppose $o_{T}(M) \neq 0$. We will prove that $M$ is a twin of $T$. Let $N \in O_T(M)$.
 
 If $M \cup N = V(T)$, then $\overline{M} = N \setminus M$ is a co-module of $T$ (see (\ref{eq close})), which contradicts the minimality of the co-module $N$ of $T$ because $M \cap N \neq \varnothing$ (see Notation~\ref{not oO}). Thus 
 \begin{equation} \label{eq lem 3.1}
 M \cup N \neq V(T).
 \end{equation}
 Suppose for a contradiction that $M$ is not a module of $T$. 
 Since $\overline{M}$ is a module of $T$, $N$ or $\overline{N}$ is a module of $T$, $\overline{M} \cap \overline{N} \neq \varnothing$ (see (\ref{eq lem 3.1})), and $\overline{M} \cap N \neq \varnothing$ because $N \in O_T(M)$, then by Assertion~4 of Proposition~\ref{propo modules}, $\overline{M} \cup \overline{N}$ or $\overline{M} \cup N$ is a module of $T$. Moreover, since $M$ and $N$ overlap, then $2 \leq |\overline{M} \cup \overline{N}| \leq v(T)-1$ and $2 \leq |\overline{M} \cup N| \leq v(T)-1$. Thus, $\overline{M} \cup \overline{N}$ or $\overline{M} \cup N$ is a nontrivial module of $T$. In particular, $M \cap N$ or $M \cap \overline{N}$ is a co-module of $T$, which contradicts the minimality of the co-module $M$ of $T$.    
 Thus, $M$ is a module of $T$. Similarly, $N$ is also a module of $T$. By Assertions~3 and 5 of Proposition~\ref{propo modules}, $M \cap N$ and $M \setminus N$ are modules of $T$. Moreover, each of these modules is trivial because $M \in \text{mc}(T)$. Since $M$ and $N$ overlap, it follows that $|M \setminus N| = |M \cap N| =1$ and thus $|M| =2$. Since $M$ is a module of $T$, then it is a twin of $T$ as desired.
 
 Now we prove that $o_T(M) \leq 2$. Suppose not and consider three pairwise distinct elements $I,J,K$ of $O_{T}(M)$. As shown above, $M,I,J$ and $K$ are twins of $T$. By interchanging $I,J$ and $K$, we may assume $|M \cap I \cap J| = 1$. Thus, there are four pairwise distinct vertices $x,y,z,t \in V(T)$ such that $M = \{x,y\}$, $I = \{y,z\}$, and $J = \{y,t\}$. By Assertion~4 of Proposition~\ref{propo modules}, $\{x,y,z\}$, $\{x,y,t\}$ and $\{y,z,t\}$ are modules of $T$. Since $\{y,z,t\}$ is a module of $T$, by interchanging $T$ and $T^{\star}$, we may assume $T(x, \{y,z,t\}) =1$. Thus, $T(z, \{x,y,t\}) = 0$ because $T(z,x) =0$ and $\{x,y,t\}$ is a module of $T$. Hence $T(t,z) =1 \neq T(t,x) =0$, a contradiction because $\{x,y,z\}$ is a module of $T$. Thus $o_T(M) \leq 2$. 
\end{proof}

To continue the examination of minimal co-modules, 
we extend the notion of twin to that of transitive module (see Definition~\ref{def trans mod}).

\begin{defn} \label{def trans mod} \normalfont
Let $T$ be a tournament. A {\it transitive module} of $T$ is a module $M$ of $T$ such that the subtournament $T[M]$ is transitive. A transitive module $M$ of $T$ is {\it nontrivial} if the module $M$ of $T$ is nontrivial. A {\it transitive component} of $T$ is a transitive module of $T$ which is maximal (under inclusion) among the transitive modules of $T$.
 \end{defn}
 
For example, the modules with at most two vertices are transitive.

\begin{rem} \label{rem M union N transit} \normalfont
 Given a tournament $T$, consider two disjoint subsets $M$ and $N$ of $T$ such that the tournaments $T[M]$ and $T[N]$ are transitive. If $M \equiv_T N$, then the tournament $T[M \cup N]$ is transitive. In particular, if $M$ and $N$ are transitive modules of $T$, then the tournament $T[M \cup N]$ is transitive. 
\end{rem}

We need the next two results about transitive modules and transitive components. The following lemma is the analogue of Assertion~4 of Proposition~\ref{propo modules} for transitive modules.    

\begin{lem} \label{trans modules}
Given a tournament $T$, if $M$ and $N$ are transitive modules of $T$ such that $M \cap N \neq \varnothing$, then $M \cup N$ is also a transitive module of $T$.
\end{lem}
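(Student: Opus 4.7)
The plan is to proceed in two stages. First, since $M$ and $N$ are modules of $T$ with $M \cap N \neq \varnothing$, Assertion~4 of Proposition~\ref{propo modules} immediately gives that $M \cup N$ is a module of $T$. The remaining task is to show that $T[M \cup N]$ is transitive, which amounts to splicing the two transitive tournaments $T[M]$ and $T[N]$ compatibly along their common part $T[M \cap N]$.

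I would partition $M \cup N$ into $A = M \setminus N$, $B = M \cap N$, and $C = N \setminus M$. If $A = \varnothing$ or $C = \varnothing$, then $M \cup N$ equals $N$ or $M$ and transitivity is immediate, so assume $A$, $B$, $C$ are all nonempty. Since $A \subseteq \overline{N}$ and $N$ is a module of $T$, each $a \in A$ satisfies $a \equiv_T N$, so $T(a,\cdot)$ takes a constant value $\epsilon_a \in \{0,1\}$ on $B \cup C$. Symmetrically, for each $c \in C$ we have $c \equiv_T M$, so $T(c,\cdot)$ is a constant $\eta_c$ on $A \cup B$. For any $a \in A$ and $c \in C$, the identity $\epsilon_a + \eta_c = T(a,c) + T(c,a) = 1$ forces $\epsilon_a$ to be independent of $a$ and $\eta_c$ independent of $c$; call their common values $\epsilon$ and $\eta$, with $\epsilon + \eta = 1$.

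With this uniformity established, I would split into the cases $\epsilon = 1$ and $\epsilon = 0$. In the first case, $T(A, B \cup C) = 1$ and $T(A \cup B, C) = 1$, so the transitive order on $T[M]$ places $A$ entirely before $B$ and the transitive order on $T[N]$ places $B$ entirely before $C$. Both orders agree on the shared part $B$ (each restricts to the transitive order on $T[B]$), hence they concatenate into a total order on $M \cup N$ whose associated transitive tournament is precisely $T[M \cup N]$, giving the desired transitivity. The case $\epsilon = 0$ is handled symmetrically, or by passing to the dual $T^{\star}$, which preserves modules and transitive submodules and swaps the two cases. The main obstacle in the whole argument is the uniformity step above: without knowing that every arc between $A$ and $C$ points in the same direction, the two locally transitive structures on $T[M]$ and $T[N]$ could fail to fit together, so everything hinges on extracting that one parity constraint from $T(a,c) + T(c,a) = 1$.
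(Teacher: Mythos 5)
Your proposal is correct. It differs from the paper's proof mainly in how it organizes the reduction: the paper keeps the two-block picture, noting via Assertion~5 of Proposition~\ref{propo modules} that $N \setminus M$ is a module of $T$, via Assertion~6 that $M \equiv_T N \setminus M$ (disjoint modules), and that $N\setminus M$ inherits transitivity from $N$, after which Remark~\ref{rem M union N transit} finishes the job; you instead work with the three-block partition $A = M\setminus N$, $B = M\cap N$, $C = N\setminus M$, derive the constancy of the cross-arcs directly from the definition of a module together with the complementarity $T(a,c)+T(c,a)=1$ (using that $A$ and $C$ are nonempty), and then explicitly concatenate the transitive orders of $A$, $B$, $C$, checking the $A$--$C$ arcs separately. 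Your uniformity step is essentially a hands-on reproof of Assertions~5 and 6 in this special situation, and your concatenation step reproves the content of Remark~\ref{rem M union N transit}; so your route is more elementary and self-contained, at the cost of some length, while the paper's is shorter because it leans on the already-stated toolbox of module properties. Both arguments are sound, and your handling of the degenerate cases ($A$ or $C$ empty, and the dual case $\epsilon=0$) is fine.
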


\begin{proof}
Let $M$ and $N$ be two transitive modules of $T$ such that $M \cap N \neq \varnothing$.
 By Assertion~4 of Proposition~\ref{propo modules}, $M \cup N$ is a module of $T$. We have to prove that the tournament $T[M \cup N]$ is transitive. 
 If $M$ and $N$ do not overlap, then we are done because in this instance, $M \cup N = M$ or $N$. Hence suppose that $M$ and $N$ overlap. By Assertion~5 of Proposition~\ref{propo modules}, $N \setminus M$ is a module of $T$. By Assertion~6 of Proposition~\ref{propo modules}, we have $M \equiv_T N \setminus M$. Moreover, the module $N \setminus M$ of $T$ is transitive because the module $N$ is. It follows that the tournament $T[M \cup N] = T[M \cup (N \setminus M)]$ is transitive (see Remark~\ref{rem M union N transit}).    
\end{proof}

\begin{cor} \label{trans partition}
 Given a tournament $T$, the transitive components of $T$ form a partition of $V(T)$.
\end{cor}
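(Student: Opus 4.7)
The plan is to mimic the familiar argument that the connected components of a graph partition its vertex set, replacing connectivity with the property of being a transitive module and using Lemma~\ref{trans modules} as the closure-under-union-for-intersecting-sets property. Concretely I would verify two things in sequence: that the transitive components cover $V(T)$, and that distinct transitive components are disjoint.

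First, to see that every vertex lies in some transitive component, fix $v \in V(T)$. The singleton $\{v\}$ is a trivial module of $T$ and $T[\{v\}]$ is vacuously transitive, so $\{v\}$ is a transitive module of $T$. Since $V(T)$ is finite, the family of transitive modules of $T$ that contain $v$, ordered by inclusion, admits a maximal element, and such a maximal element is by definition a transitive component of $T$ containing $v$.

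Next, to see disjointness, suppose $C_1$ and $C_2$ are transitive components of $T$ with $C_1 \cap C_2 \neq \varnothing$. Lemma~\ref{trans modules} immediately gives that $C_1 \cup C_2$ is a transitive module of $T$. The maximality of $C_1$ and of $C_2$ among transitive modules then forces $C_1 = C_1 \cup C_2 = C_2$, so any two distinct transitive components are disjoint. Combining this with the covering property yields the desired partition.

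I do not anticipate a genuine obstacle: the argument is a direct transcription of the standard maximal-piece partition reasoning, with Lemma~\ref{trans modules} doing all the work. The only subtlety worth flagging is confirming that the notion of transitive module really includes singletons (and, if one wishes, the empty set), which is immediate from Definition~\ref{def trans mod}; without this, the covering step would fail.
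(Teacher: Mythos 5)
Your proof is correct and follows essentially the same route as the paper: singletons give the covering, and Lemma~\ref{trans modules} plus maximality gives disjointness. The only cosmetic difference is in the covering step, where the paper takes the union of all transitive modules containing $v$ (and applies Lemma~\ref{trans modules} to see it is transitive), while you simply pick a maximal transitive module containing $v$ by finiteness — both are fine.
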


\begin{proof}
 Let $\mathcal{C}(T)$ be the set of the transitive components of the tournament $T$. Let $v \in V(T)$. The singleton $\{v\}$ is obviously a transitive module of $T$. Let $C_v$ be the union of all the transitive modules of $T$ containing $v$. By Lemma~\ref{trans modules}, $C_v$ is a transitive module of $T$. Thus, clearly $C_v \in \mathcal{C}(T)$. It follows that $V(T) \subseteq \cup \mathcal{C}(T)$. Now let $C$ and $C'$ be two elements of $\mathcal{C}(T)$. Suppose $C \cap C' \neq \varnothing$. Again by Lemma~\ref{trans modules}, $C \cup C'$ is a transitive module of $T$. It follows from the maximality of the transitive modules $C$ and $C'$ of $T$ that $C=C'$. Thus, $\mathcal{C}(T)$ is a partition of $V(T)$.        
\end{proof}

The following observation is a consequence of Lemmas~\ref{degre G_T} and \ref{trans modules}.

\begin{Observation} \label{o=2 tr=4}
 Let $T$ be a decomposable tournament and let $M \in {\rm mc}(T)$.
 If $o_T(M) =2$, then $M$ is contained in a transitive component $C$ of $T$ such that $|C| \geq 4$. 
\end{Observation}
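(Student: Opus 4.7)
The plan is to use Lemma~\ref{degre G_T} to pin down the structure of $M$ together with its two overlapping minimal co-modules, then to analyze the few possible configurations and show that the only surviving one produces a transitive module of size $4$ containing $M$.

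First, since $o_T(M) = 2 \neq 0$, Lemma~\ref{degre G_T} gives that $M$ is a twin of $T$; write $M = \{x,y\}$ and let $N_1, N_2$ be the two elements of $O_T(M)$. Because overlap is symmetric, $M \in O_T(N_i)$, so $o_T(N_i) \neq 0$, and Lemma~\ref{degre G_T} also forces each $N_i$ to be a twin. Since each $N_i$ overlaps $M$, $|N_i \cap M| = 1$, leaving three configurations up to the symmetries $x \leftrightarrow y$ and $N_1 \leftrightarrow N_2$: (a) $N_1$ and $N_2$ meet $M$ at the same vertex, say $N_1 = \{y,z\}$ and $N_2 = \{y,t\}$; (b) $N_1, N_2$ meet $M$ at different vertices but share their outside vertex, say $N_1 = \{y,z\}$ and $N_2 = \{x,z\}$; (c) $N_1, N_2$ meet $M$ at different vertices and have distinct outside vertices, say $N_1 = \{y,z\}$ and $N_2 = \{x,t\}$ with $z \neq t$.

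Next I would rule out cases (a) and (b). Case (a) is dispatched by the very argument already carried out inside the proof of Lemma~\ref{degre G_T}: that argument uses only three twins of exactly the form arising in (a), and produces the contradiction $T(t,z) = 1$ versus $T(t,x) = 0$ against $\{x,y,z\}$ being a module. For case (b), chasing the module identities from $\{x,y\}, \{y,z\}, \{x,z\}$ all being modules of $T$ gives $T(x,y) = T(x,z) = T(y,z)$ and simultaneously $T(y,x) = T(y,z)$, which collapses to $T(x,y) = 1 - T(x,y)$, absurd.

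Only case (c) survives, and this is where the real work lies. Assertion~4 of Proposition~\ref{propo modules} promotes $M \cup N_1 = \{x,y,z\}$ and $M \cup N_2 = \{x,y,t\}$ to modules of $T$; since they share $M$, their union $W := \{x,y,z,t\}$ is a module too. Fixing the orientation of the arc between $x$ and $y$ and propagating the module identities forced by $\{x,y\}, \{y,z\}, \{x,t\}$ being modules shows that $T[W]$ is the transitive tournament on four vertices. Hence $W$ is a transitive module of $T$ containing $M$, and by Corollary~\ref{trans partition} together with Lemma~\ref{trans modules} the unique transitive component $C$ of $T$ containing $M$ also contains $W$, so $|C| \geq 4$. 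The main obstacle is really the bookkeeping: ruling out (a) and (b) is short, but one must carefully verify in case (c) that the propagated identities force not merely a module but a transitive module on the full four vertices.
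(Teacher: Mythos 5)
Your argument is correct: the configuration analysis is exhaustive, cases (a) and (b) really are impossible (your arc-chasing for (b) and your re-use of the computation inside the proof of Lemma~\ref{degre G_T} for (a) both check out), and in case (c) the sets $\{x,y,z\}$, $\{x,y,t\}$ and $W=\{x,y,z,t\}$ are modules with $T[W]\simeq\underline{4}$, which yields the conclusion. The paper, however, reaches the same endpoint with no case analysis at all: since $M$, $N_1$, $N_2$ are twins, they are transitive modules, and Lemma~\ref{trans modules} applied twice shows directly that $M\cup N_1\cup N_2$ is a \emph{transitive} module of $T$, so it lies inside the transitive component $C$ containing $M$; the size bound then comes from the observation that $M$, $N_1$, $N_2$ restrict to three pairwise distinct twins of $T[M\cup N_1\cup N_2]$ and no tournament on three vertices has three distinct twins, forcing $|M\cup N_1\cup N_2|=4$. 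That counting remark is exactly what kills your case (b), while your case (a) need not be excluded at all for the paper's purposes (it would still give a transitive $4$-set, even though, as you show, it cannot occur). So your proof buys nothing extra beyond the paper's but costs the explicit verification that $T[W]$ is transitive, which Lemma~\ref{trans modules} gives for free; if you keep your structure, you could still shortcut case (c) by citing that lemma instead of propagating the orientations by hand.
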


\begin{proof}
 Suppose $o_T(M) =2$. Let $N$ and $L$ be the two distinct elements of $O_T(M)$. By Lemma~\ref{degre G_T}, $M$, $N$, and $L$ are twins of $T$. Let $C$ be the transitive component of $T$ such that $M \subseteq C$. By Lemma~\ref{trans modules}, $C$ is the union of the transitive modules of $T$ containing $M$. Moreover, again by Lemma~\ref{trans modules}, $M \cup N \cup L$ is a transitive module of $T$.
 It follows that $M \cup N \cup L \subseteq C$. To complete the proof, it suffices to verify that $|M \cup N \cup L| =4$.  
 Since $M$, $N$, and $L$ are pairwise distinct twins of $T[M \cup N \cup L]$ (see Assertion~1 of Proposition~\ref{propo modules}), and since the tournaments with three vertices do not admit three pairwise distinct twins, then $|M \cup N \cup L| \geq 4$, and thus $|M \cup N \cup L| =4$ because $O_T(M) = \{N,L\}$.            
\end{proof}

Now we will see how the minimal co-modules of a tournament $T$ are delimited by the transitive components of $T$, in the sense that an element of $\text{mc}(T)$ never overlaps a  transitive component of $T$ (see Lemma~\ref{lem C mnc}). To introduce a notation indicating the minimal co-modules that are contained in a transitive component (see Notation~\ref{not M(k)}), we have to use the following fact.    

\begin{Fact} \label{rem twin mnc} 
 Let $T$ be a tournament with at least three vertices. If $T$ admits a twin $W = \{x,y\}$, then $|{\rm mc}(T) \cap \{W, \{x\}, \{y\}\}| = 1$. 
\end{Fact}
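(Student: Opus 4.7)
The plan is to reduce the statement to a small case analysis driven by two preliminary observations. First, since $v(T) \geq 3$, the twin $W = \{x,y\}$ is a nontrivial module of $T$, so $W$ itself is a co-module of $T$; hence the intersection $\{W, \{x\}, \{y\}\} \cap {\rm mc}(T)$ will always be nonempty (either $W$ is minimal, or some strictly smaller co-module lies inside it). Second, by (\ref{eq comod non vide}), every co-module is nonempty, and therefore a singleton co-module is automatically minimal, since no strictly smaller co-module can fit inside it. So the question reduces to tracking how minimality is distributed among the three candidate sets.

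I would then split on whether $W \in {\rm mc}(T)$. If it is, then by definition no co-module lies strictly inside $W$, so neither $\{x\}$ nor $\{y\}$ is a co-module, and the intersection has size exactly $1$. If $W \notin {\rm mc}(T)$, then some co-module is strictly contained in $W$; since co-modules are nonempty and $|W| = 2$, that co-module is $\{x\}$ or $\{y\}$, so at least one of them lies in ${\rm mc}(T)$. The statement then reduces to showing that $\{x\}$ and $\{y\}$ cannot both be co-modules.

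For this last point I would use that $\{x\}$ is a co-module iff $\overline{\{x\}}$ is a nontrivial module of $T$, which, since $\{x\}$ is the unique vertex outside $\overline{\{x\}}$, amounts to saying that $x$ is either a source or a sink of $T$; similarly for $y$. Now invoke that $W$ is a twin: $x$ and $y$ have identical arc-incidences with every vertex outside $W$. If $x$ were a source, then $y$ would dominate every vertex in $V(T) \setminus W$, and the arc between $x$ and $y$ would be forced to be $(x,y)$; so $y$ would be dominated by $x$ while dominating everything else, and since $v(T) \geq 3$ supplies a witness outside $W$, $y$ would be neither a source nor a sink, ruling out $\{y\} \in {\rm mc}(T)$. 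The dual argument handles the case where $x$ is a sink. Hence in this second case the intersection has size exactly $1$ as well.

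The main (and essentially only) obstacle is the source/sink argument in the final paragraph, where the twin property is used to rule out both singletons being co-modules simultaneously; everything else is direct bookkeeping from the definitions of co-module and minimality.
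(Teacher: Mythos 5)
Your proof is correct and follows essentially the same route as the paper: split on whether $W$ is a minimal co-module, and in the remaining case use a third vertex together with the twin property to show that $\{x\}$ and $\{y\}$ cannot both be co-modules. Your source/sink reformulation of ``$\overline{\{x\}}$ is a nontrivial module'' is just a cosmetic repackaging of the paper's computation $T(x,y)=T(x,z)=T(y,z)$, hence $T(y,x)\neq T(y,z)$.
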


\begin{proof}
Suppose that $T$ admits a twin $W = \{x,y\}$.
 If $W \in \text{mc}(T)$, then by minimality of $W$, $\text{mc}(T) \cap \{W, \{x\}, \{y\}\} = \{W\}$. Hence suppose $W \notin \text{mc}(T)$. In this instance, since $W$ is a co-module of $T$, then $\{x\} \in \text{mc}(T)$ or $\{y\} \in \text{mc}(T)$. By interchanging $x$ and $y$, we may assume $\{x\} \in \text{mc}(T)$. We have to prove that $\{y\} \notin \text{mc}(T)$. Let $z \in V(T) \setminus \{x,y\}$. We have $T(x,y) = T(x,z)$ because $\overline{\{x\}}$ is a module of $T$. Moreover, $T(x,z) = T(y,z)$ because $\{x,y\}$ is a module of $T$. Thus $T(x,y) = T(y,z)$, i.e., $T(y,x) \neq T(y,z)$. Therefore, $\overline{\{y\}}$ is not a module of $T$ and thus $\{y\} \notin \text{mc}(T)$.       
\end{proof}

\begin{Notation} \label{not M(k)}\normalfont
Let $T$ be a tournament with at least three vertices. Suppose that $T$ admits a transitive component $C$ such that $|C|=n \geq 2$. Let us denote the elements of $C$ by $v_0, \ldots, v_{n-1},$ in such a way that $T[C] = (C, \{(v_i,v_j): 0 \leq i < j \leq n-1\})$. For every $k \in \{0, \ldots, n-2\}$, the pair $\{v_{k}, v_{k+1}\}$ is a twin of $T[C]$ (see (\ref{eq mod inter n})) and thus of $T$ (see Assertion~2 of Proposition~\ref{propo modules}). The unique element of ${\rm mc}(T)$ that is contained in $\{v_{k}, v_{k+1}\}$ (see Fact~\ref{rem twin mnc}) is denoted by $C(k)$.   
\end{Notation}

\begin{Example} \label{example trans1} \normalfont
Consider the case where $T$ is the transitive tournament $\underline{n}$, where $n \geq 3$. The unique transitive component of $T$ is $C = V(T) = \{0, \ldots, n-1\}$. We have $C(0) = \{0\}$, $C(n-2)= \{n-1\}$, and for every integer $k$ such that $1 \leq k \leq n-3$, we have $C(k) = \{k, k+1\}$.
\end{Example}

Observation~\ref{obs M(k)} contains more details about $C(k)$ in the general case.

\begin{Observation} \label{obs M(k)}
 Let $T$ be a tournament with at least four vertices. Suppose that $T$ admits a transitive component $C$ such that $|C|=n \geq 3$. Let us denote the elements of $C$ by $v_0, \ldots, v_{n-1},$ in such a way that $T[C] = (C, \{(v_i,v_j): 0 \leq i < j \leq n-1\})$. The following assertions are satisfied.
 \begin{enumerate}
  \item For every $k \in \{0, \ldots, n-2\}$, we have
\begin{equation*}
C(k) = \
\begin{cases}
\{v_0\} \ \text{or} \ \{v_0, v_1\} \ \ \ \ \ \ \ \ \ \ \ \ \  \text{if} \ \  k=0,\\

\{v_{n-1}\} \ \text{or} \ \{v_{n-2},v_{n-1}\} \  \ \ \ \  \text{if} \ k= n-2,\\        

\{v_k, v_{k+1}\} \ \ \ \ \ \ \ \ \ \ \ \ \ \ \ \ \ \ \ \ \hspace{0.09cm} \text{otherwise.}
\end{cases}
\end{equation*}
\item We have $C = \displaystyle\bigcup_{k=0}^{n-2} C(k)$. 
 \end{enumerate}
\end{Observation}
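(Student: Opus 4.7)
The plan is as follows. For each $k \in \{0, \ldots, n-2\}$, since $\{v_k, v_{k+1}\}$ is a twin of $T$, Fact~\ref{rem twin mnc} guarantees that $C(k)$ is the unique element of $\text{mc}(T) \cap \{\{v_k, v_{k+1}\}, \{v_k\}, \{v_{k+1}\}\}$. Assertion~1 therefore amounts to excluding certain candidates among these three, and Assertion~2 then says that the remaining candidates are enough to cover all of $C$.

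The central claim I would establish for Assertion~1 is: for every $i \in \{1, \ldots, n-2\}$, the singleton $\{v_i\}$ is \emph{not} a co-module of $T$. Indeed, if $\{v_i\}$ were a co-module then $\overline{\{v_i\}}$ would be a module of $T$, forcing $v_i$ to have the same relation to every other vertex; but the transitivity of $T[C]$ gives $v_{i-1} \to v_i \to v_{i+1}$, whence $T(v_i, v_{i-1}) = 0 \neq 1 = T(v_i, v_{i+1})$, a contradiction. Using this, for $1 \leq k \leq n-3$ both $\{v_k\}$ and $\{v_{k+1}\}$ are excluded, so $C(k) = \{v_k, v_{k+1}\}$; for $k = 0$ the candidate $\{v_1\}$ is excluded (since $1 \leq 1 \leq n-2$), so $C(0) \in \{\{v_0\}, \{v_0, v_1\}\}$, and dually for $k = n-2$. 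This establishes Assertion~1.

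For Assertion~2, the inclusion $\bigcup_{k=0}^{n-2} C(k) \subseteq C$ is immediate, and Assertion~1 already gives $v_0 \in C(0)$ and $v_{n-1} \in C(n-2)$. When $n \geq 4$, every intermediate vertex $v_i$ with $1 \leq i \leq n-2$ is covered trivially: if $i \leq n-3$ then $v_i \in C(i) = \{v_i, v_{i+1}\}$, and if $i = n-2$ then $v_i \in C(n-3) = \{v_{n-3}, v_{n-2}\}$ (using $n - 3 \geq 1$). The main subtlety, and the only place where I would have to invoke the hypothesis $v(T) \geq 4$, is the case $n = 3$: there the only intermediate vertex is $v_1$, and I must rule out the scenario $C(0) = \{v_0\}$ and $C(1) = \{v_2\}$ occurring simultaneously. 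To do so, I would pick $w \in V(T) \setminus C$; if both $\{v_0\}$ and $\{v_2\}$ were co-modules, then $\overline{\{v_0\}}$ being a module of $T$ together with $v_0 \to v_1$ would force $v_0 \to w$, while $\overline{\{v_2\}}$ being a module together with $v_1 \to v_2$ would force $w \to v_2$; but $C$ is itself a module of $T$, so $w$ must relate to $v_0$ and $v_2$ in the same way, contradicting $T(w, v_0) = 0 \neq 1 = T(w, v_2)$. This closes Assertion~2.
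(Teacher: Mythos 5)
Your proof is correct and follows essentially the same route as the paper's: you exclude the interior singletons $\{v_i\}$ ($1 \leq i \leq n-2$) as co-modules using the transitivity of $T[C]$, and you settle the $n=3$ case of Assertion~2 by the same contradiction via a vertex $w \in \overline{C}$ (guaranteed by $v(T) \geq 4$) and the fact that $C$ is a module. No gaps; nothing further is needed.
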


\begin{proof}
 To verify the first assertion, let $k \in \{0, \ldots, n-2\}$. Since $T(v_1,v_2) = 1 \neq T(v_1,v_0) =0$ (resp. $T(v_{n-2},v_{n-1}) = 1 \neq T(v_{n-2},v_{n-3}) =0$), then $\overline{\{v_1\}}$ (resp. $\overline{\{v_{n-2}\}}$) is not a module of $T$. Therefore, neither $\{v_1\}$ nor $\{v_{n-2}\}$ is a co-module of $T$. It follows from the definitions of $C(0)$ and $C(n-2)$ that $C(0) \in \{\{v_0\}, \{v_{0}, v_{1}\}\}$ and $C(n-2) \in \{\{v_{n-1}\}, \{v_{n-2}, v_{n-1}\}\}$. Now suppose $1 \leq k \leq n-3$. Since $T(v_k, v_{k+1}) =1 \neq T(v_{k}, v_{k-1}) =0$ (resp. $T(v_{k+1}, v_{k+2}) =1 \neq T(v_{k+1}, v_{k}) =0$), then $\overline{\{v_k\}}$ (resp. $\overline{\{v_{k+1}\}}$) is not a module of $T$. Therefore, neither $\{v_{k}\}$ nor $\{v_{k+1}\}$ is a co-module of $T$. It follows from the definition of $C(k)$ that $C(k) = \{v_{k}, v_{k+1}\}$. 
 
 We now verify the second assertion. If $n \geq 4$, the second assertion is an immediate consequence of the first one. Hence suppose $n=3$. We have $C = \{v_0, v_1, v_2\}$. By the first assertion $C(0) \in \{\{v_0\}, \{v_0, v_{1}\}\}$ and $C(1) \in \{\{v_2\}, \{v_1, v_{2}\}\}$. 
 Suppose for a contradiction that $C(0) = \{v_0\}$ and $C(1) = \{v_2\}$. In this instance, $\overline{\{v_0\}}$ and $\overline{\{v_2\}}$ are modules of $T$. 
 Thus $T(v_0, \overline{C}) = 1 \neq T(v_2, \overline{C}) = 0$, contradicting that $C$ is a nontrivial module of $T$. It follows that $C(0) = \{v_0, v_1\}$ or $C(1) = \{v_1, v_2\}$. Thus $C = C(0) \cup C(1)$, as desired.   
\end{proof}

\begin{lem} \label{lem C mnc}
 Let $T$ be a tournament with at least three vertices. Suppose that $T$ admits a transitive component $C$ such that $|C|=n \geq 2$. Given $M \subseteq V(T)$, the following assertions are equivalent.
 \begin{enumerate}
  \item $M \in {\rm mc}(T)$ and $M \cap C \neq \varnothing$.
  \item $M \in \{C(0), \ldots, C(n-2)\}$.
 \end{enumerate}
\end{lem}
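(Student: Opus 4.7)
The plan is to prove $(2)\Rightarrow(1)$ directly from the definition of $C(k)$, and to prove $(1)\Rightarrow(2)$ by splitting on whether the minimal co-module $M$ is itself a nontrivial module of $T$ or only its complement $\overline{M}$ is. For $(2)\Rightarrow(1)$: if $M=C(k)$, then $M\in\mathrm{mc}(T)$ by definition and $M\subseteq\{v_k,v_{k+1}\}\subseteq C$ with $M\neq\varnothing$ by (\ref{eq comod non vide}), so $M\cap C\neq\varnothing$.

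For $(1)\Rightarrow(2)$, suppose first that $M$ is a nontrivial module of $T$. The goal is to force $M\subseteq C$, because then $T[M]$ is simultaneously transitive (as a subtournament of $T[C]$) and indecomposable (by Remarks~\ref{fact1 min mod comod} and~\ref{fact2 min mod indec}), so $|M|=2$; then by (\ref{eq mod inter n}) $M=\{v_k,v_{k+1}\}$ for some $k$, and Fact~\ref{rem twin mnc} identifies $M$ with $C(k)$. To establish $M\subseteq C$, I would rule out overlap of $M$ with $C$: by Assertion~1 of Proposition~\ref{propo modules} and the indecomposability of $T[M]$, overlap forces $|M\cap C|=1$, and then either Assertion~5 of Proposition~\ref{propo modules} exhibits $M\setminus C$ as a nontrivial module strictly inside $M$ (contradicting the minimality of the co-module $M$) when $|M|\geq 3$, or Lemma~\ref{trans modules} enlarges $C$ to the transitive module $M\cup C$ (contradicting the maximality of the transitive component $C$) when $|M|=2$. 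The remaining forbidden containment $C\subsetneq M$ is excluded because $C$ would itself be a nontrivial module and hence a co-module strictly inside $M$.

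Now suppose instead that $\overline{M}$ is a nontrivial module of $T$ while $M$ is not. Minimality of $M$ immediately gives that $\overline{M}$ is a \emph{maximal} nontrivial module, since any $N\supsetneq\overline{M}$ nontrivial module would yield $\overline{N}\subsetneq M$ a co-module. I would then show that $M$ must be a singleton by contradiction. Assuming $|M|\geq 2$, I run through the relative positions of the modules $\overline{M}$ and $C$: disjointness of $\overline{M}$ with $C$ and the containment $C\subseteq\overline{M}$ both violate $M\cap C\neq\varnothing$ or the minimality of $M$; in the overlap subcase, Assertions~3 and~5 of Proposition~\ref{propo modules} give $|M\cap C|=1$ and then Assertion~4 makes $\overline{M}\cup C$ a strictly larger nontrivial module of $T$, contradicting maximality; in the remaining subcase $\overline{M}\subsetneq C$, the set $\overline{M}$ is a proper interval of the transitive $T[C]$ by (\ref{eq mod inter n}), and can be extended by one vertex inside $C$ to produce a strictly larger nontrivial module of $T$, again contradicting maximality. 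Thus $M=\{x\}$, and $\overline{\{x\}}$ being a module forces $x$ to be a source or sink of $T$; combined with $x\in C$ and the transitive order on $C$, this forces $x=v_0$ or $x=v_{n-1}$, and Fact~\ref{rem twin mnc} then yields $M=C(0)$ or $M=C(n-2)$.

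The main obstacle I foresee is the $\overline{M}\subsetneq C$ branch of the second case: one must carefully verify that the extended interval within $C$ produces a nontrivial module of $T$ rather than $V(T)$. The degenerate subcases where the extension would equal $V(T)$ can arise only when $T$ itself is transitive, and they force $M$ to already be the singleton $\{v_0\}$ or $\{v_{n-1}\}$, which is in fact the conclusion we seek rather than a genuine obstruction.
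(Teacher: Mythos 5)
Your argument is correct, but it takes a genuinely different route from the paper's proof. The paper first disposes of the transitive case, then for $|C|\geq 3$ uses Observation~\ref{obs M(k)} (so that $M$ meets some $C(k)$), Lemma~\ref{degre G_T} (overlapping minimal co-modules are twins) and Lemma~\ref{trans modules} together with the maximality of $C$ to force $M\subseteq C$, and finally treats $|C|=2$ by a separate, rather delicate analysis. You instead split on whether $M$ itself or only $\overline{M}$ is a nontrivial module: when $M$ is a module, you force $M\subseteq C$ using the module calculus of Proposition~\ref{propo modules}, the indecomposability of $T[M]$ (Remarks~\ref{fact1 min mod comod} and~\ref{fact2 min mod indec}), and Lemma~\ref{trans modules} only in the $|M|=2$ overlap subcase; when only $\overline{M}$ is a module, you exploit the maximality of $\overline{M}$ (forced by the minimality of $M$) to conclude that $M$ is a singleton, hence $\{v_0\}$ or $\{v_{n-1}\}$. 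Both proofs finish identically via (\ref{eq mod inter n}), Fact~\ref{rem twin mnc} and Notation~\ref{not M(k)}. What your route buys is uniformity: it avoids Lemma~\ref{degre G_T} and Observation~\ref{obs M(k)} altogether and handles $|C|=2$ and transitive $T$ without special treatment; the cost is a longer case analysis on the relative positions of $M$ (or $\overline{M}$) and $C$, whereas the paper reuses overlap machinery it needs elsewhere anyway. One detail to make explicit when writing it up: in the overlap and disjointness subcases of your second branch, deriving $|M\cap C|=1$ (resp.\ excluding $C\subseteq M$) needs not only the minimality of $M$ but also the standing hypothesis that $M$ is not a module, to rule out the possibility that the module produced equals $M$ itself; with that noted, every step goes through, including the $\overline{M}\subsetneq C$ branch, since under your contradiction hypothesis $|M|\geq 2$ the extended interval has at most $v(T)-1$ vertices and is therefore a nontrivial module.
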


\begin{proof}
If the tournament $T$ is transitive, then $C = V(T)$ and the lemma follows immediately from (\ref{mnc(n)}) and Example~\ref{example trans1}. Hence suppose that $T$ is non-transitive. In this instance, $C$ is a nontrivial transitive component of $T$. In particular, $|C| \neq v(T)-1$ (see Remark~\ref{rem M union N transit}). Thus 
\begin{equation} \label{eq correction}
2 \leq |C|=n \leq v(T)-2. 
\end{equation}
The second assertion implies the first one by the definition of $C(k)$ for $k \in \{0, \ldots, n-2\}$ (see Notation~\ref{not M(k)}). Conversely, suppose $M \in {\rm mc}(T)$ and $M \cap C \neq \varnothing$.
 Up to isomorphism, we may assume $T[C] = \underline{n}$.
 
 First suppose $n \geq 3$. In this instance, we have $C = \displaystyle\bigcup_{k=0}^{n-2} C(k)$ (see Assertion~2 of Observation~\ref{obs M(k)}). Thus, there is $k \in \{0, \ldots, n-2\}$ such that $M \cap C(k) \neq \varnothing$. If $M = C(k)$, then we are done. Hence suppose $M \neq C(k)$. In this instance, since $M$ and $C(k)$ are distinct elements of $\text{mc}(T)$ and $M \cap C(k) \neq \varnothing$, then $M$ and $C(k)$ overlap. It follows from Lemma~\ref{degre G_T} that $M$ is a twin of $T$. By Lemma~\ref{trans modules}, $C \cup M$ is a transitive module of $T$. By maximality of the transitive module $C$ of $T$, we obtain $M \subseteq C$. 
 Since $M$ is a twin of $T$ and $M \subseteq C$, then by Assertion~1 of Proposition~\ref{propo modules}, $M$ is a also a twin of $T[C] = \underline{n}$. Thus, $M = \{i,i+1\}$ for some $i \in \{0, \ldots, n-2\}$ (see (\ref{eq mod inter n})). Since $M \in \text{mc}(T)$, it follows that $M = C(i)$ (see Notation~\ref{not M(k)}).
 
 Second suppose $n = 2$. We have to prove that $M = C(0)$. Since $T[C] = \underline{2}$, we have $C(0) = \{0\}, \{1\}$, or $\{0,1\}$. By interchanging the vertices $0$ and $1$, as well as the tournaments $T$ and $T^{\star}$, we may assume that $C(0) = \{0\}$ or $C(0) = C =\{0,1\}$. 
 
 To begin, suppose $C(0) = C =\{0,1\}$. In this instance, $C \in {\rm mc}(T)$.    
 For a contradiction, suppose that $M$ overlaps $C$. By Lemma~\ref{degre G_T}, $M$ is a twin of $T$. It follows from Lemma~\ref{trans modules} that $C \cup M$ is a transitive module of $T$. This contradicts the maximality of the transitive module $C$ of $T$. Thus, $M$ and $C$ do not overlap. Therefore, since $M$ and $C$ are minimal co-modules of $T$ and $M \cap C \neq \varnothing$, we obtain $M =C = C(0)$.
 
 Finally, suppose $C(0) = \{0\}$. By minimality of the co-modules $M$ and $C(0)$ of $T$, we have $M = C(0) = \{0\}$ or $M \cap C = \{1\}$. Suppose for a contradiction that $M \cap C = \{1\}$. Recall that since $\overline{\{0\}}$ and $C = \{0,1\}$ are modules of $T$, and $T[C] = \underline{2}$, then $T(0, \overline{\{0\}}) =1$ and
 \begin{equation} \label{eq ccbar}
 T(C, \overline{C}) = 1. 
 \end{equation}
 Therefore, $\{1\}$ is not a  co-module of $T$ because $T(1,0) = 0 \neq T(1, \overline{C}) =1$. It follows that $M \neq \{1\}$ and since $M \cap C = \{1\}$, we have $M \setminus C \neq \varnothing$. Consider a vertex $x \in M \setminus C$. Since the tournament $T[C \cup \{x\}]$ is transitive (see Remark~\ref{rem M union N transit}), it follows from the maximality of the transitive module $C$ of $T$ that $C \cup \{x\}$ is not a module of $T$. 
 Moreover, since $T(C, \overline{C}) = 1$ (see (\ref{eq ccbar})) and $C \cup \{x\}$ is not a module of $T$, there exists a vertex $y \in V(T) \setminus (C \cup \{x\})$ such that $T(y,x) = 1$. It follows that if $M$ is a module of $T$, then $y \in M$, and by Assertion~5 of Proposition~\ref{propo modules}, $M \setminus C$ is a nontrivial module of $T$, which contradicts $M \in {\rm mc}(T)$. Thus, $M$ is not a module of $T$. Since $\overline{M}$ is a module of $T$, we obtain $T(1, \overline{M}) = 0$ because $T(1,0) =0$, $0 \in \overline{M}$, and $1 \notin \overline{M}$. But $T(1, \overline{C}) =1$ (see (\ref{eq ccbar})). It follows that $\overline{M} = \{0\}$. In particular $\overline{C} \varsubsetneq M$, which contradicts $M \in {\rm mc}(T)$ because $\overline{C}$ is a nontrivial module of $T$ (see (\ref{eq ccbar}) and (\ref{eq correction})). We conclude that $M = C(0) = \{0\}$, completing the proof.   
\end{proof}
 
The following fact is an immediate consequence of Lemma~\ref{lem C mnc}.

\begin{Fact} \label{o(0,n-2)}
 Given a tournament $T$ with at least three vertices, if $T$ admits a transitive component $C$ such that $|C| \geq 2$, then $o_T(C(0)) \leq 1$ and $o_T(C(|C|-2)) \leq 1$. 
\end{Fact}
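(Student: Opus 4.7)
The plan is to combine Lemma~\ref{lem C mnc} with the explicit description of $C(0)$ in Observation~\ref{obs M(k)} to reduce the claim to an immediate combinatorial check. Set $n=|C|$ and label $C=\{v_0,\ldots,v_{n-1}\}$ as in Notation~\ref{not M(k)}. Passing from $T$ to its dual $T^{\star}$ reverses the linear order on $C$ while preserving modules, hence minimal co-modules and all the quantities $o_T(\cdot)$; therefore $o_T(C(n-2))$ becomes $o_{T^{\star}}(C(0))$ computed in $T^{\star}$, and it suffices to prove $o_T(C(0))\leq 1$.

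The first step is to observe that any $M\in O_T(C(0))$ overlaps $C(0)$, so meets $C(0)\subseteq C$; by Lemma~\ref{lem C mnc}, $M=C(k)$ for some $k\in\{1,\ldots,n-2\}$. The second step invokes Observation~\ref{obs M(k)} (for $n\geq 3$), which gives $C(0)\in\{\{v_0\},\{v_0,v_1\}\}$. If $C(0)=\{v_0\}$, then a singleton cannot overlap any set, so $O_T(C(0))=\varnothing$. If $C(0)=\{v_0,v_1\}$, then $C(k)$ can share a vertex with $C(0)$ only if it contains $v_0$ or $v_1$; however, Observation~\ref{obs M(k)} shows that for $k\geq 2$ one has $C(k)\subseteq\{v_2,\ldots,v_{n-1}\}$, which is disjoint from $C(0)$. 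Hence the unique candidate is $C(1)$, and the bound $o_T(C(0))\leq 1$ follows.

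The main obstacle, if any, is simply the bookkeeping for small transitive components. When $n=2$ Observation~\ref{obs M(k)} does not apply, but here $\{C(0),\ldots,C(n-2)\}=\{C(0)\}$ and $C(0)$ does not overlap itself, so $O_T(C(0))=\varnothing$ trivially. When $n=3$ the index $1$ coincides with $n-2$, so $C(1)$ takes the alternative form $\{v_2\}$ or $\{v_1,v_2\}$ rather than the generic $\{v_k,v_{k+1}\}$, but in either alternative $C(1)\subseteq\{v_1,v_2\}$ remains the only possible overlapper of $C(0)$. Once these edge cases are recorded, the argument is complete.
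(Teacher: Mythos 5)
Your argument is correct and matches the paper's intent: the paper states the Fact as an immediate consequence of Lemma~\ref{lem C mnc}, and your proof is precisely the natural fleshing-out (any overlapper of $C(0)$ meets $C$, hence equals some $C(k)$, and only $C(1)$ can intersect $C(0)$), with the dual handling $C(|C|-2)$. One small simplification: you do not need Observation~\ref{obs M(k)} (whose hypothesis $v(T)\geq 4$ your $n=3$ case technically skirts), since $C(k)\subseteq\{v_k,v_{k+1}\}$ already holds by the definition in Notation~\ref{not M(k)}, which gives the disjointness for $k\geq 2$ directly.
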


\section{Minimal $\Delta$-decompositions (or $\delta$-decompositions)} \label{section delta decom}

Minimal $\Delta$-decompositions form a basic tool in our proofs of Propositions~\ref{prop Delta 2}, \ref{prop Delta 3} and \ref{prop Delta 4}. A {\it minimal $\Delta$-decomposition} (or a  {\it $\delta$-decomposition}) of a tournament $T$ is a $\Delta$-decomposition $D$ of $T$ in which every element is a minimal co-module of $T$, i.e., such that $D \subseteq \text{mc}(T)$. To see that every tournament $T$ admits a $\delta$-decomposition, let $D$ be a $\Delta$-decomposition of $T$. For every element $M$ of $D$, since $M$ is a co-module of $T$, there exists a minimal co-module $M^{-}$ of $T$ such that $M^{-} \subseteq M$. Clearly $\{M^{-} : M \in D\}$ is a $\delta$-decomposition of $T$. For example, consider the case of transitive tournaments. 
The $\delta$-decompositions of $\underline{n}$ are the sets of maximum size among the subsets of $\text{mc}(\underline{n})$ whose elements are pairwise disjoint. Therefore, we obtain the following fact by using (\ref{mnc(n)}).

\begin{Fact} \label{fact deltadecom n}
 Consider the transitive tournament $\underline{n}$, where $n \geq 4$. The following two assertions hold.
 \begin{enumerate}
  \item If $n$ is even, then $\{\{0\}, \{n-1\}\} \cup \{\{2i-1,2i\}: 1 \leq i \leq \frac{n-2}{2}\}$ is the unique $\delta$-decomposition of $\underline{n}$. 
  \item If $n$ is odd, then a subset $D$ of $2^{\{0, \ldots, n-1\}}$ is a $\delta$-decomposition of $\underline{n}$ if and only if $D$ is a $\delta$-decomposition of $\underline{n} - i$ for some odd integer $i \in \{1, \ldots, n-2\}$.  
 \end{enumerate}
In particular, $\Delta(\underline{n}) = \left\lceil \frac{n+1}{2} \right\rceil$ as found in Proposition~\ref{Deltatr}.
\end{Fact}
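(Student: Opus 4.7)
The plan is to exploit the explicit form of $\mathrm{mc}(\underline{n})$ given by (\ref{mnc(n)}): every minimal co-module of $\underline{n}$ is either the singleton $\{0\}$, the singleton $\{n-1\}$, or a pair $\{i,i+1\}$ of consecutive integers with $1\le i\le n-3$. Since a $\delta$-decomposition of $\underline{n}$ is just a maximum-size pairwise-disjoint subfamily of $\mathrm{mc}(\underline{n})$, the whole statement reduces to a simple combinatorial problem on the path $\{1,\dots,n-2\}$.

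The key structural remark is that the vertex $0$ belongs to no element of $\mathrm{mc}(\underline{n})$ other than $\{0\}$, and symmetrically for $n-1$. A short case analysis on whether the two singletons lie in a $\delta$-decomposition $D$ shows that including both strictly dominates the other three possibilities: if both are in $D$, the remaining elements form a pairwise disjoint family of consecutive pairs inside $\{1,\dots,n-2\}$, contributing at most $\lfloor (n-2)/2\rfloor$ extra elements, whereas dropping either singleton loses one element without freeing any new pair. Consequently every $\delta$-decomposition of $\underline{n}$ contains both $\{0\}$ and $\{n-1\}$, and the maximum size is $2+\lfloor (n-2)/2\rfloor=\lceil (n+1)/2\rceil$, in agreement with Proposition~\ref{Deltatr}.

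For $n$ even, the remaining $(n-2)/2$ disjoint consecutive pairs must perfectly tile $\{1,\dots,n-2\}$, and the unique such tiling is $\{\{2i-1,2i\}:1\le i\le (n-2)/2\}$, which proves assertion~1. For $n$ odd, the $(n-3)/2$ consecutive pairs leave exactly one vertex $j\in\{1,\dots,n-2\}$ uncovered, and a parity argument on the two sub-paths $\{1,\dots,j-1\}$ and $\{j+1,\dots,n-2\}$ (each of which must be perfectly tiled by consecutive pairs) forces $j$ to be odd and determines the pairs on both sides uniquely, namely $\{1,2\},\{3,4\},\dots,\{j-2,j-1\},\{j+1,j+2\},\dots,\{n-3,n-2\}$.

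The final step is to identify, for each odd $j\in\{1,\dots,n-2\}$, the resulting family with a $\delta$-decomposition of $\underline{n}-j$. Since $n-1$ is even, assertion~1 applied to $\underline{n}-j\simeq\underline{n-1}$ describes the unique $\delta$-decomposition of $\underline{n}-j$; the order-preserving relabelling works out exactly because $j$ is odd, so that the parity of positions in $\underline{n}-j$ matches the parity of vertex-labels on each side of $j$ in $\underline{n}$. I expect this last parity bookkeeping to be the only place where a slip is likely; apart from it, the proof is a routine application of the description of $\mathrm{mc}(\underline{n})$.
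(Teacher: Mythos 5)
Your proof is correct and follows essentially the same route as the paper, which gives no separate argument but derives the Fact exactly as you do: from the description (\ref{mnc(n)}) of $\mathrm{mc}(\underline{n})$ together with the remark that $\delta$-decompositions are precisely the maximum-size pairwise-disjoint subfamilies of $\mathrm{mc}(\underline{n})$. The parity bookkeeping you flag does check out: for odd $j$, applying Assertion~1 to $\underline{n}-j$ via the order-preserving relabelling yields exactly $\{\{0\},\{n-1\},\{1,2\},\ldots,\{j-2,j-1\},\{j+1,j+2\},\ldots,\{n-3,n-2\}\}$, i.e.\ the family your tiling argument produces.
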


The next remark is an immediate consequence of Assertion~4 of Lemma~\ref{comod part}.

\begin{rem} \label{rem cherifa} \normalfont
 Given a decomposable tournament $T$ such that $v(T) \geq 4$, every $\delta$-decomposition of $T$ contains a nontrivial module of $T$. In particular, $T$ admits a minimal co-module which is a nontrivial module of $T$. 
\end{rem}

The starting points of our proofs of Propositions~\ref{prop Delta 2}, \ref{prop Delta 3} and \ref{prop Delta 4} are based on the following result. 

\begin{prop} \label{propo Delta 234}
 Given a (decomposable) tournament $T$, the following three assertions are satisfied.
 \begin{enumerate}
  \item If $\Delta(T) = 2$, then for every $M \in {\rm mc}(T)$, we have $o_T(M) \leq 1$.
  \item If $\Delta(T) = 3$, then $T$ admits a $\delta$-decomposition $D$ such that $o_T(M) \leq 1$ for every $M \in D$.
  \item If $\Delta(T) \geq 4$, then $T$ admits a $\delta$-decomposition which contains four elements $M_1, M_2, M_3$ and $M_4$ satisfying the following conditions.
  \begin{enumerate}
  \item For every $i \in \{1,3,4\}$, $o_T(M_i) \leq 1$.
  \item $T(M_1,M_2) = T(M_2,M_3) = 1$.
  \item There exists $x \in M_4$ such that $T(x,M_1) =1$ or $T(M_3,x) =1$.
 \end{enumerate}
 \end{enumerate}
\end{prop}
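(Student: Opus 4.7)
My strategy throughout is driven by the structural results of Section~\ref{section mnc}: by Lemma~\ref{degre G_T}, a minimal co-module $M$ with $o_T(M) \geq 1$ must be a twin and satisfies $o_T(M) \leq 2$; by Observation~\ref{o=2 tr=4}, if $o_T(M) = 2$ then $M$ lies inside a transitive component of $T$ of cardinality at least $4$; Lemma~\ref{lem C mnc} identifies the minimal co-modules of $T$ meeting such a component $C$ as exactly $C(0), \ldots, C(|C|-2)$; and Fact~\ref{o(0,n-2)} records that the endpoints $C(0)$ and $C(|C|-2)$ always satisfy $o_T \leq 1$. The engine of every argument is that a transitive component $C$ with $|C| \geq 4$ simultaneously creates potentially high-overlap middle twins $C(k)$ with $k \in \{1, \ldots, |C|-3\}$ and contributes two disjoint endpoint co-modules with $o_T \leq 1$, so $\Delta(T)$ is forced to be large whenever such a component coexists with co-modules outside $C$.

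For Assertion~1 I argue by contradiction. If some $M \in \text{mc}(T)$ has $o_T(M) \geq 2$, then Lemma~\ref{degre G_T} and Observation~\ref{o=2 tr=4} place $M$ in a transitive component $C$ with $|C| \geq 4$. When $C = V(T)$, Proposition~\ref{Deltatr} already yields $\Delta(T) \geq 3$; otherwise $\overline{C}$ is a co-module of $T$, and together with the two endpoint co-modules $C(0), C(|C|-2) \subseteq C$ (pairwise disjoint since $|C| \geq 4$, by Observation~\ref{obs M(k)}) one obtains a co-modular decomposition $\{C(0), C(|C|-2), \overline{C}\}$ of size $3$. Either way $\Delta(T) = 2$ is contradicted.

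For Assertion~2, if no transitive component of $T$ has size $\geq 4$ then Observation~\ref{o=2 tr=4} ensures $o_T(M) \leq 1$ for every $M \in \text{mc}(T)$ and any $\delta$-decomposition works. Otherwise fix a transitive component $C$ with $|C| \geq 4$. The sub-case $C = V(T)$ reduces, via Proposition~\ref{Deltatr}, to $T \in \{\underline{4}, \underline{5}\}$, handled by direct inspection. When $C \neq V(T)$, choose any minimal co-module $L$ of $T$ contained in $\overline{C}$ (one exists since $\overline{C}$ itself is a co-module). Then $\{C(0), C(|C|-2), L\}$ is a $\delta$-decomposition whose first two elements satisfy $o_T \leq 1$ by Fact~\ref{o(0,n-2)}. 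If $o_T(L) = 2$, Observation~\ref{o=2 tr=4} places $L$ in a second transitive component $C'$ with $|C'| \geq 4$; Corollary~\ref{trans partition} together with $L \cap C = \emptyset$ forces $C' \cap C = \emptyset$, and then $\{C(0), C(|C|-2), C'(0), C'(|C'|-2)\}$ is a co-modular decomposition of size $4$, contradicting $\Delta(T) = 3$.

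For Assertion~3 I pick a $\delta$-decomposition $D$ of $T$ of size $\Delta(T) \geq 4$ containing a nontrivial module (Remark~\ref{rem cherifa}). By Assertion~2 of Lemma~\ref{comod part}, at most one element of $D$ fails to be a module, so at least $|D| - 1 \geq 3$ elements of $D$ are nontrivial modules, and Assertion~6 of Proposition~\ref{propo modules} allows me to view this submodule set as a tournament $T^*$ under the well-defined relation $T(M, N) \in \{0, 1\}$. A Hamiltonian path of $T^*$, into which every middle twin of $D$ with $o_T = 2$ can be inserted as an internal vertex (this is possible because any $C(k) \in D$ with $k \in \{1, \ldots, |C|-3\}$ has both an in-neighbour and an out-neighbour in $T^*$, arising from the chain $C(0) \to C(1) \to \cdots \to C(|C|-2)$ inside its transitive component), produces four consecutive vertices $M_1, M_2, M_3, M_4$ satisfying (b); condition (c) then follows from the path edge $M_3 \to M_4$. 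If $D$ contains a non-module element, I instead put it at the $M_4$-slot: $\overline{M_4}$ is a nontrivial module containing $M_1$ and $M_3$, so $T(x, M_1) = T(x, M_3) = T(x, \overline{M_4})$ for every $x \in M_4$ and either boolean value already forces (c). The main obstacle is ensuring (a): the middle twins with $o_T = 2$ present in $D$ must be forced to land only at the $M_2$-slot of the extended Hamiltonian path, and this requires selecting the $\delta$-decomposition with care using the flexibility of the endpoint sets described in Observation~\ref{obs M(k)}.
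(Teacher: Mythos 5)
Your arguments for Assertions~1 and~2 are essentially correct and follow the same route as the paper (place an $o_T=2$ co-module inside a transitive component $C$ with $|C|\geq 4$ via Lemma~\ref{degre G_T} and Observation~\ref{o=2 tr=4}, then use $C(0)$, $C(|C|-2)$, Fact~\ref{o(0,n-2)} and a co-module outside $C$ to force $\Delta(T)\geq 3$, resp.\ $\geq 4$). The problem is Assertion~3, where you yourself flag ``the main obstacle is ensuring (a)'' and then only assert that it ``requires selecting the $\delta$-decomposition with care.'' That is precisely the part that carries all the difficulty, and it is not filled in; worse, the mechanism you propose --- take four \emph{consecutive} vertices $M_1,M_2,M_3,M_4$ of a Hamiltonian path of the tournament induced on the module elements of $D$, and force every $o_T=2$ twin into the $M_2$-slot by re-choosing $D$ --- provably cannot work in general. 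Take $T=\underline{10}$: by Fact~\ref{fact deltadecom n} its $\delta$-decomposition is \emph{unique}, namely $\{\{0\},\{9\}\}\cup\{\{1,2\},\{3,4\},\{5,6\},\{7,8\}\}$, and by Fact~\ref{fact ot(n)} both $\{3,4\}$ and $\{5,6\}$ have $o_T=2$. The induced order on these six co-modules is a transitive tournament, so the Hamiltonian path is also unique, and every window of four consecutive elements contains one of the two adjacent $o_T=2$ twins in a slot from $\{1,3,4\}$, violating (a). So there is no freedom either in the $\delta$-decomposition or in the path, and no consecutive window works.

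The missing idea is that the four elements need not be consecutive: conditions (b) and (c) only ask for $M_1\rightarrow M_2\rightarrow M_3$ and a one-sided relation between $M_4$ and $M_1$ or $M_3$, so one may (and must) take widely separated elements. For $\underline{n}$ the paper takes $M_1=\{0\}$, $M_2=\{1,2\}$, $M_3=\{n-3,n-2\}$, $M_4=\{n-1\}$. In the general non-transitive case it first modifies the $\delta$-decomposition so that it contains the endpoint co-modules $C(0)$ and $C(|C|-2)$ of every large transitive component (Corollary~\ref{0, n-2 dans D}), which have $o_T\leq 1$ by Fact~\ref{o(0,n-2)}, and then splits according to whether $D$ meets the middle of $C$ (in which case a middle twin $C(i)$ is placed at the $M_2$-slot between the two endpoints, with $M_4$ taken outside $C$ via Lemma~\ref{lem C mnc} and Corollary~\ref{exterieur component}, condition (c) coming from $C$ being a module) or not (in which case all four chosen elements already satisfy $o_T\leq 1$ and Observation~\ref{fact config4} does the labelling). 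Without an argument of this kind --- in particular without exploiting Corollary~\ref{0, n-2 dans D} and Lemma~\ref{lem C mnc} to control where the $o_T=2$ twins can sit --- your proof of Assertion~3 does not go through.
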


The aim of the rest of this section is to prove Proposition~\ref{propo Delta 234}. For this purpose, we need some preliminary results. The following three ones are about $\delta$-decompositions. They are principally consequences of Lemmas~\ref{degre G_T} and \ref{lem C mnc}.   

\begin{cor} \label{if M notin D}
Given a decomposable tournament $T$, consider a $\delta$-decomposition $D$ of $T$ and let $M \in {\rm mc}(T)$. If $M \notin D$, then $o_T(M) \in \{1,2\}$ and $D \cap O_T(M) \neq \varnothing$.
\end{cor}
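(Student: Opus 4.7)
The plan is straightforward and relies only on the maximality of $D$ together with the pairwise incomparability of elements of $\text{mc}(T)$. First, I would show that $M$ cannot be disjoint from every element of $D$: if it were, then $D \cup \{M\}$ would still consist of pairwise disjoint co-modules of $T$, hence a co-modular decomposition of size $|D|+1 = \Delta(T)+1$, contradicting $|D| = \Delta(T)$. This step uses nothing beyond the definition of $\delta$-decomposition as a $\Delta$-decomposition; the fact that $M \in \text{mc}(T)$ (as opposed to merely being a co-module) plays no role here.

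Thus there exists $N \in D$ with $M \cap N \neq \varnothing$. Since $M \notin D$ while $N \in D$, we have $M \neq N$, and as both are elements of $\text{mc}(T)$, neither can properly contain the other by minimality. Combined with $M \cap N \neq \varnothing$, this forces $M$ and $N$ to overlap, so $N \in D \cap O_T(M)$; in particular $D \cap O_T(M) \neq \varnothing$ and $o_T(M) \geq 1$. The upper bound $o_T(M) \leq 2$ is then immediate from Lemma~\ref{degre G_T}, giving $o_T(M) \in \{1,2\}$.

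I do not anticipate any real obstacle: the statement is a formal consequence of the definition of $\delta$-decomposition and the incomparability of distinct minimal co-modules, with Lemma~\ref{degre G_T} providing the companion upper bound on $o_T(M)$. The only point worth pausing over is the dichotomy ``disjoint or overlapping'' for distinct elements of $\text{mc}(T)$, which is precisely the remark made just before Notation~\ref{not oO} and which is used implicitly throughout.
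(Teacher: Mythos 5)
Your proof is correct and follows essentially the same route as the paper's: both rest on the maximality of the $\Delta$-decomposition $D$ (so $D \cup \{M\}$ cannot be a co-modular decomposition), the disjoint-or-overlapping dichotomy for distinct minimal co-modules, and Lemma~\ref{degre G_T} for the bound $o_T(M) \leq 2$. The only difference is presentational: you argue directly, while the paper phrases the same argument as a contradiction starting from the negation of the conclusion.
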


\begin{proof}
 Suppose that $o_T(M) \notin \{1,2\}$ or $D \cap O_T(M) = \varnothing$. By Lemma~\ref{degre G_T}, we have $o_T(M) = 0$ or $D \cap O_T(M) = \varnothing$. In both instances, if $M \notin D$, then $D \cup \{M\}$ would be a co-modular decomposition of $T$, which contradicts the hypothesis that $D$ is a $\delta$-decomposition of $T$. Thus $M \in D$. 
\end{proof}
 
 \begin{cor} \label{0, n-2 dans D}
 Let $T$ be a tournament. There exists a $\delta$-decomposition $D$ of $T$ such that for every transitive component $C$ of $T$ with $|C| \geq 4$, we have $\{C(0), C(|C|-2)\} \subseteq D$.
\end{cor}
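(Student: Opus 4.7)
The plan is to start from an arbitrary $\delta$-decomposition $D$ of $T$ and, for each transitive component $C$ of $T$ with $|C| = n \geq 4$, perform at most two local swap operations within $D$ to bring $C(0)$ and $C(n-2)$ inside. Since the transitive components of $T$ partition $V(T)$ (Corollary~\ref{trans partition}), swaps performed inside different components involve disjoint vertex sets and cannot interfere with each other, so the components may be processed independently.

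Fix such a component $C$ and write $C = \{v_0, \ldots, v_{n-1}\}$ with $T[C] = \underline{n}$. I first treat $C(0)$, the case of $C(n-2)$ being symmetric. If $C(0) = \{v_0\}$ is a singleton, then Lemma~\ref{degre G_T} forces $o_T(C(0)) = 0$, so Corollary~\ref{if M notin D} already gives $C(0) \in D$ with nothing to do. In the nontrivial case $C(0) = \{v_0, v_1\}$, suppose $C(0) \notin D$. Then Corollary~\ref{if M notin D} together with Fact~\ref{o(0,n-2)} yields $o_T(C(0)) = 1$ and that the unique element of $O_T(C(0))$ must belong to $D$. The key step is to identify that element: any $N \in O_T(C(0))$ is a twin of $T$ by Lemma~\ref{degre G_T} and meets $C(0) \subseteq C$, so Lemma~\ref{lem C mnc} together with Observation~\ref{obs M(k)} leaves only the possibility $N = C(1) = \{v_1, v_2\}$. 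I will then swap $C(1)$ for $C(0)$ in $D$; the same characterization of minimal co-modules meeting $C$ shows that no other element of $D$ can contain $v_0$, so the swap preserves pairwise disjointness, and since the cardinality is unchanged the result is still a $\delta$-decomposition.

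The main obstacle is the possible interference between the swap at $C(0)$ and the symmetric swap at $C(n-2)$, which remove $C(1)$ and $C(n-3)$ respectively. These two indices coincide when $n = 4$ and produce overlapping sets when $n = 5$, so the swaps are not automatically independent. I plan to resolve these small cases by ruling out that both $C(0)$ and $C(n-2)$ are simultaneously absent from $D$. For $n = 4$, if both were missing then $(D \setminus \{C(1)\}) \cup \{C(0), C(n-2)\}$ would be a co-modular decomposition of size $|D|+1$, contradicting the maximality of $D$. For $n = 5$, the characterization above would force both $C(1)$ and $C(n-3) = C(2)$ to belong to $D$, but these two twins share the vertex $v_2$ and so cannot coexist in a co-modular decomposition. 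For $n \geq 6$ the sets $C(1)$ and $C(n-3)$ are already disjoint, so the two swaps proceed independently. Iterating this procedure over all transitive components of cardinality at least four then yields the required $\delta$-decomposition.
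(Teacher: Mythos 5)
Your proposal is correct and follows essentially the same route as the paper: starting from an arbitrary $\delta$-decomposition and using Corollary~\ref{if M notin D}, Fact~\ref{o(0,n-2)}, Observation~\ref{obs M(k)} and Lemma~\ref{lem C mnc} to show that whenever $C(0)$ (resp. $C(|C|-2)$) is absent, the only obstruction in $D$ is $C(1)$ (resp. $C(|C|-3)$), which is then swapped out. Your explicit treatment of the $|C|=4$ and $|C|=5$ interference cases (via maximality and pairwise disjointness of $D$) is a detail the paper's single global swap leaves implicit, and it is a welcome addition rather than a deviation.
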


\begin{proof}
 Consider a $\delta$-decomposition $D$ of $T$. Let $C$ be a transitive component of $T$ such that $|C| \geq 4$. By Assertion~1 of Observation~\ref{obs M(k)} and by Lemma~\ref{lem C mnc}, we have $O_T(C(0)) = \varnothing$ or $O_T(C(0)) = \{C(1)\}$. In the first instance, $C(0) \in D$ by Corollary~\ref{if M notin D}. In the second one, again by Corollary~\ref{if M notin D}, if $C(0) \notin D$, then $C(1) \in D$. 
 Similarly, we have $O_T(C(|C|-2)) = \varnothing$ or $O_T(C(|C|-2)) = \{C(|C|-3)\}$. In the first instance, $C(|C|-2) \in D$. In the second one, if $C(|C|-2) \notin D$, then $C(|C|-3) \in D$. To summarize, we have shown that for every transitive component $C$ of $T$ such that $|C| \geq 4$, the following two claims hold.
 \begin{enumerate}
  \item If $C(0) \notin D$, then $C(1) \in D$ and $O_T(C(0)) = \{C(1)\}$.
  \item If $C(|C|-2) \notin D$, then $C(|C|-3) \in D$ and $O_T(C(|C|-2)) = \{C(|C|-3)\}$.
 \end{enumerate}

 Now let $\mathcal{C}$ (resp. $\mathcal{C}'$) be the set of the transitive components $C$ of $T$ such that $|C| \geq 4$ and $C(0) \notin D$ (resp. $|C| \geq 4$ and $C(|C|-2) \notin D$). It follows from Claim~1 (resp. Claim~2) above that for every $C \in \mathcal{C}$ (resp. $C \in \mathcal{C}'$), we have $C(1) \in D$ and $O_T(C(0)) = \{C(1)\}$ (resp. $C(|C|-3) \in D$ and $O_T(C(|C|-2)) = \{C(|C|-3)\}$). Therefore, by taking $D'=(D \cup \{C(0): C \in \mathcal{C}\} \cup \{C(|C|-2): C \in \mathcal{C}'\}) \setminus (\{C(1) : C \in \mathcal{C}\} \cup \{C(|C|-3) : C \in \mathcal{C}'\})$, we obtain that $D'$ is a $\delta$-decomposition of $T$ such that for every transitive component $C$ of $T$ with $|C| \geq 4$, we have $\{C(0), C(|C|-2)\} \subseteq D'$, as desired.    
\end{proof}

\begin{cor} \label{exterieur component}
 Given a tournament $T$ admitting a nontrivial transitive component $C$, there exists  $M \in {\rm mc}(T)$ such that $M \cap C = \varnothing$ and $o_T(M) \leq 1$. 
\end{cor}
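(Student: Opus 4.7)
My plan is to split on whether the complement $\overline{C}$ contains a transitive component of $T$ of cardinality at least two. Observe first that $\overline{C}\neq\varnothing$ (since $C$ is nontrivial), and by Corollary~\ref{trans partition} the transitive components of $T$ distinct from $C$ partition $\overline{C}$. Moreover, because $C$ is a nontrivial module of $T$ it is a co-module, so by~(\ref{eq close}) $\overline{C}$ is a co-module as well; in particular $\overline{C}$ contains at least one element of ${\rm mc}(T)$.

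\textbf{Case 1:} some transitive component $C'$ of $T$ with $|C'|\ge 2$ is contained in $\overline{C}$. In that case I would simply take $M:=C'(0)$, which is well defined by Notation~\ref{not M(k)}, lies inside $C'\subseteq\overline{C}$ by Observation~\ref{obs M(k)} (so $M\cap C=\varnothing$), and satisfies $o_T(M)\le 1$ by Fact~\ref{o(0,n-2)}. This disposes of the case immediately.

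\textbf{Case 2:} every transitive component of $T$ contained in $\overline{C}$ is a singleton. Here I would pick any $M\in{\rm mc}(T)$ with $M\subseteq\overline{C}$ (existence noted above) and show that $M$ cannot be a twin of $T$. Indeed, were $M$ a twin, then for any $v\in M$ the transitive component $C_v$ of $T$ through $v$ together with $M$ would form a transitive module of $T$ by Lemma~\ref{trans modules}, forcing $M\subseteq C_v$ by maximality of $C_v$; the component $C_v$ would then lie in $\overline{C}$ with $|C_v|\ge 2$, contradicting the case hypothesis. Since $M$ is not a twin of $T$, Lemma~\ref{degre G_T} yields $o_T(M)=0\le 1$, as required.

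The only real subtlety is the transfer step in Case~2, namely ruling out that a minimal co-module $M\subseteq\overline{C}$ is a twin of $T$. This is handled by combining the partition property of transitive components (Corollary~\ref{trans partition}) with Lemma~\ref{trans modules}, which together force any twin of $T$ sitting inside $\overline{C}$ to be contained in a size-at-least-two transitive component of $\overline{C}$ --- precisely what Case~2 forbids.
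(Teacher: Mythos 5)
Your proof is correct and takes essentially the same route as the paper's: the paper picks a minimal co-module $N\subseteq\overline{C}$ and, when $o_T(N)=2$, uses Observation~\ref{o=2 tr=4} to place it inside a transitive component $C'\subseteq\overline{C}$ with $|C'|\geq 4$, then switches to $M=C'(0)$ via Fact~\ref{o(0,n-2)} --- exactly your Case~1 endpoint trick, with your Case~2 merely re-deriving the ``twin forces a transitive component of size at least two inside $\overline{C}$'' step from Lemma~\ref{trans modules} and Corollary~\ref{trans partition} instead of citing Observation~\ref{o=2 tr=4}. One cosmetic remark: when $|C'|=2$ the containment $C'(0)\subseteq C'$ follows from Notation~\ref{not M(k)} rather than Observation~\ref{obs M(k)} (which assumes $|C'|\geq 3$), but this does not affect the argument.
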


\begin{proof}
 Since $\overline{C}$ is a co-module of $T$, there exists $N \in \text{mc}(T)$ such that $N \subseteq \overline{C}$. By Lemma~\ref{degre G_T}, we can suppose $o_T(N) =2$. By Observation~\ref{o=2 tr=4}, $N$ is contained in a transitive component $C'$ of $T$ such that $|C'| \geq 4$. By Corollary~\ref{trans partition}, $C \cap C' = \varnothing$ and thus $C \cap C'(0) = \varnothing$. By Fact~\ref{o(0,n-2)}, $o_T(C'(0)) \leq 1$. Thus, it suffices to take $M = C'(0)$.    
 \end{proof}

We also need the following observation for the proof of Assertion~3 of Proposition~\ref{propo Delta 234}.

\begin{Observation} \label{fact config4}
 Given a tournament $T$ such that $\Delta(T) \geq 4$, consider a co-modular decomposition $D$ of $T$ such that $|D|=4$. The elements of $D$ can be denoted by $M_1, M_2, M_3, M_4,$ in such a way that $T(M_1,M_2) = T(M_2,M_3) =1$, and there exists $x \in M_4$ such that $T(x,M_1) =1$ or $T(M_3,x) =1$.   
\end{Observation}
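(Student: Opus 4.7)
The plan is to split on whether every element of $D$ is a module of $T$, or some element $N \in D$ is a co-module that fails to be a module. By Assertion~2 of Lemma~\ref{comod part}, at most one element of $D$ can fail to be a module (otherwise two non-modules would contradict that assertion), so these two cases are exhaustive.

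In the first case, every pair of distinct elements of $D$ consists of disjoint modules, so Assertion~6 of Proposition~\ref{propo modules} gives $M \equiv_T M'$ for all distinct $M, M' \in D$. Consequently the relation on $D$ sending $(M,M')$ to an arc when $T(M,M')=1$ turns $D$ into a $4$-vertex tournament, and since every tournament admits a Hamiltonian path (R\'edei's theorem), I can label the elements of $D$ as $M_1,M_2,M_3,M_4$ with $T(M_i,M_{i+1})=1$ for $i \in \{1,2,3\}$. Then $T(M_1,M_2)=T(M_2,M_3)=1$ and any $x \in M_4$ satisfies $T(M_3,x)=1$, which is what is needed.

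In the second case, let $N$ be the unique element of $D$ that is not a module of $T$. By Assertion~2 of Lemma~\ref{comod part}, the three elements of $D \setminus \{N\}$ are nontrivial modules of $T$, so the same Hamiltonian-path argument applied to these three modules yields a labeling $M_1,M_2,M_3$ with $T(M_1,M_2)=T(M_2,M_3)=1$; I then set $M_4=N$. It remains to produce $x \in M_4$ with $T(x,M_1)=1$ or $T(M_3,x)=1$. Since $N$ is a co-module but not a module, $\overline{N}$ is a nontrivial module of $T$, and since the elements of $D$ are pairwise disjoint, $M_1 \cup M_2 \cup M_3 \subseteq \overline{N}$. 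Hence every vertex $v \in N$ lies outside the module $\overline{N}$ and therefore sees $M_1 \cup M_2 \cup M_3$ uniformly: either $v$ dominates all of $M_1 \cup M_2 \cup M_3$ or is dominated by all of it. In the first alternative $T(v,M_1)=1$, and in the second $T(M_3,v)=1$, so any $x=v \in M_4$ witnesses the required condition. The only real concern in the whole argument is handling the non-module element in the second case, and this is neutralized by the fact that its complement is a module, which collapses the question to the above binary dichotomy.
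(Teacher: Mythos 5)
Your proof is correct. It rests on the same two ingredients as the paper's argument (Assertion~2 of Lemma~\ref{comod part} to control how many elements of $D$ can fail to be modules, and Assertion~6 of Proposition~\ref{propo modules} to compare disjoint modules), but it is organized differently. The paper never splits into cases: it picks three modules $M_1,M_2,M_3$ of $D$, orders them in a path, and then argues by contradiction on the fourth element --- if no $x\in M_4$ works, then $T(M_1,M_4)=T(M_4,M_3)=1$ forces $\overline{M_4}$ not to be a module, hence $M_4$ is a module, and a swap of $M_2$ and $M_4$ produces the desired labeling. You instead argue directly: when all four elements are modules you invoke R\'edei's theorem on the induced $4$-vertex tournament (the paper only ever needs the trivial $3$-vertex version of this fact), and when some $N\in D$ is not a module you place it last and observe that, since $\overline{N}$ is a nontrivial module containing $M_1\cup M_2\cup M_3$, every vertex of $N$ sees $M_1\cup M_2\cup M_3$ uniformly, so the required $x$ exists automatically --- in fact every $x\in M_4$ works, which is slightly stronger than what the paper's relabeling argument delivers. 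The trade-off is that your route needs the (standard, but external) Hamiltonian-path fact for $4$-vertex tournaments and a two-case structure, while the paper's is a single uniform argument; both are complete and of comparable length.
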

\begin{proof}
 By Assertion~2 of Lemma~\ref{comod part}, the elements of $D$ can be denoted by $M_1, M_2, M_3, M_4,$ in such a way that $M_1, M_2$ and $M_3$ are modules of $T$. By interchanging $M_1, M_2$ and $M_3$, we may assume $T(M_1,M_2) = T(M_2,M_3) =1$ (see Assertion~6 of Proposition~\ref{propo modules}). If $T(x,M_1) =1$ or $T(M_3,x) =1$ for some $x \in M_4$, then we are done. Hence suppose $T(M_1,M_4) = T(M_4,M_3) =1$. In this instance $\overline{M_4}$ is not a module of $T$ because $T(M_4,M_1) =0 \neq T(M_4,M_3)=1$. Thus, $M_4$ is a module of $T$. Since $M_2$ and $M_4$ are modules of $T$, then by interchanging them, we may assume $T(M_2,M_4) =1$ (see Assertion~6 of Proposition~\ref{propo modules}). Thus $T(M_1,M_2) = T(M_2,M_4) = T(M_4,M_3) =1$. This completes the proof.     
\end{proof}

\begin{proof}[Proof of Proposition~\ref{propo Delta 234}]
It is straightforward to verify the proposition for the tournament with at most four vertices. In the rest of the proof, we suppose $v(T) \geq 5$. 

First suppose that $o_T(M) \leq 1$ for every $M \in \text{mc}(T)$. In this instance, the first two assertions are obviously satisfied. The third one follows from Observation~\ref{fact config4}.

Second suppose that the tournament $T$ is transitive. Up to isomorphism, we may assume $T = \underline{n}$ for some integer $n \geq 5$. Suppose $n=5$. By Fact~\ref{fact deltadecom n}, $\Delta(T) =3$ and $\{\{0\}, \{1,2\}, \{4\}\}$ is a $\delta$-decomposition of $T$. Moreover, $o_{T}(\{0\}) = o_T(\{4\}) =0$ and $o_T(\{1,2\}) =1$ (see Fact~\ref{fact ot(n)}). Thus, the second assertion is satisfied. Hence suppose $n \geq 6$. By Fact~\ref{fact deltadecom n}, $\Delta(T) \geq 4$ and $T$ admits a $\delta$-decomposition which contains $M_1 = \{0\}$, $M_2 = \{1,2\}$, $M_3 = \{n-2,n-3\}$, and $M_4 = \{n-1\}$. We have $T(M_1,M_2) = T(M_2,M_3) = T(M_3,M_4) =1$. Moreover, $o_T(M_1) = o_T(M_4) =0$ and $o_T(M_2) = o_T(M_3)=1$ (see Fact~\ref{fact ot(n)}). Thus, the third assertion is satisfied. We conclude that the proposition holds for transitive tournaments.  

Third, suppose that the tournament $T$ is non-transitive, and that there exists $Y \in \text{mc}(T)$ such that $o_T(Y) \geq 2$. By Lemma~\ref{degre G_T}, $o_T(Y) =2$. By Observation~\ref{o=2 tr=4}, $Y$ is contained in a transitive component $C$ of $T$ such that $|C| =n \geq 4$. Moreover, $C \neq V(T)$ because $T$ is non-transitive. Therefore, $\{C(0), C(n-2), \overline{C}\}$ is a co-modular decomposition of $T$. In particular $\Delta(T) \geq 3$. By Fact~\ref{o(0,n-2)}, we have 
 \begin{equation} \label{11111}
 o_T(C(0)) \leq 1 \ \text{and} \  o_T(C(n-2)) \leq 1. 
 \end{equation}
 Moreover,
 since $C$ is a nontrivial transitive component of $T$, then by Corollary~\ref{exterieur component} 
 there exists $N \in \text{mc}(T)$ such that $N \cap C = \varnothing$ and $o_T(N) \leq 1$.
 It follows that if $\Delta(T) =3$, then $\{C(0), C(n-2), N\}$ is a $\delta$-decomposition of $T$ in which every element $X$ satisfies $o_T(X) \leq 1$, as desired.  
 Hence suppose $\Delta(T) \geq 4$.
 First, suppose that 
 \begin{equation} \label{22222}
 \text{there exists} \ Z \in {\rm mc}(T) \ \text{such that} \ Z \cap C = \varnothing \ \text{and} \ o_T(Z) =2. 
 \end{equation}
 By Observation~\ref{o=2 tr=4}, $Z$ is contained in a transitive component $C'$ of $T$ such that $|C'| =n' \geq 4$. By Corollary~\ref{trans partition}, $C \cap C' = \varnothing$. By Corollary~\ref{0, n-2 dans D}, there exists a $\delta$-decomposition $D$ of $T$ such that $\{C(0), C(n-2), C'(0), C'(n'-2)\} \subseteq D$. By interchanging the transitive components $C$ and $C'$, we may assume $T(C,C') =1$ (see Assertion~6 of Proposition~\ref{propo modules}). Thus, the third assertion is satisfied by taking the $\delta$-decomposition $D$ with its four elements $M_1 = C(0)$, $M_2 = C(n-2)$, $M_3 = C'(0)$, and $M_4 = C'(n'-2)$. Indeed,  $o_T(M_i) \leq 1$ for every $i \in \{1,2,3,4\}$ by Fact~\ref{o(0,n-2)}, and $T(M_1,M_2) = T(M_2,M_3 ) = T(M_3,M_4) =1$ by construction.
 
 Second, suppose that (\ref{22222}) does not hold. By Lemma~\ref{degre G_T},  
 \begin{equation} \label{33333}
 \text{for every} \ Z \in {\rm mc}(T) \ \text{such that} \ Z \cap C = \varnothing, \ \text{we have} \  o_T(Z) \leq 1.
 \end{equation}
 By Corollary~\ref{0, n-2 dans D}, there exists a $\delta$-decomposition $D$ of $T$ such that $\{C(0), C(n-2)\} \subseteq D$. To begin, suppose that $D \cap \{C(i): 0 \leq i \leq n-2\} = \{C(0), C(n-2)\}$. By Lemma~\ref{lem C mnc}, since $\Delta(T) \geq 4$, there exist distinct $M,L \in D$ such that $(M \cup L) \cap C = \varnothing$. By (\ref{33333}), we have $o_T(M) \leq 1$ and $o_T(L) \leq 1$. Thus, $\{C(0), C(n-2), M, L\}$ is a co-modular decomposition of $T$ that is contained in the $\delta$-decomposition $D$ of $T$, and in which every element $X$ satisfies $o_T(X) \leq 1$ (see (\ref{11111})). Therefore, the third assertion is satisfied by applying Observation~\ref{fact config4} to $\{C(0), C(n-2), M, L\}$. To finish, suppose that there is an integer $i$ such that $1 \leq i \leq n-3$ and $C(i) \in D$. We have $\{C(0), C(i), C(n-2)\} \subseteq D$. Moreover, by maximality of $D$, it follows from Lemma~\ref{lem C mnc} and Corollary~\ref{exterieur component} that $D$ contains an element $K$ such that $K \cap C = \varnothing$. We have $o_T(K) \leq 1$ (see (\ref{33333})). Thus, the third assertion is satisfied by taking the $\delta$-decomposition $D$ with its four elements $M_1 = C(0)$, $M_2 = C(i)$, $M_3 = C(n-2)$, and $M_4 = K$. Indeed, $o_T(M_i) \leq 1$ for every $i \in \{1,3,4\}$ (see (\ref{11111})), $T(M_1,M_2) = T(M_2,M_3 ) =1$, and since $C$ is a module of $T$, then for $x \in M_4$, we have $T(x,C) =1$ or $T(C,x) =1$.       
\end{proof}

\section{Proofs of Propositions \ref{prop Delta 2}, \ref{prop Delta 3} and \ref{prop Delta 4}} \label{section proofs}

The proofs use the following notation. 

\begin{Notation} \label{Notation M} \normalfont
 Given a decomposable tournament $T$, consider a minimal co-module of $T$ such that $o_T(M) \leq 1$. When $o_T(M) = 1$, we denote by $M'$ the element of $O_T(M)$. We set 
   \begin{equation*}
\widetilde{M} = \
\begin{cases}
\ \ \  M  \hspace*{0.95cm} \text{if} \ \  o_T(M) = 0,\\

M \cap M' \ \ \ \   \text{if}  \ \ o_T(M) = 1. 
\end{cases}
\end{equation*}
Notice that $\widetilde{M} \neq \varnothing$. More precisely, if $o_T(M) = 1$, then $M$ and $M'$ are twins of $T$ and $|\widetilde{M}| =1$ (see Lemma~\ref{degre G_T}).
 \end{Notation}

For a better understanding of Notation~\ref{Notation M}, notice the following remark which is a consequence of Lemma~\ref{degre G_T}. 

\begin{rem} \label{rem1} \normalfont
Given a decomposable tournament $T$ and a minimal co-module $M$ of $T$ such that $o_T(M) \leq 1$,
one of the following holds
\begin{enumerate}
 \item $\widetilde{M} = M$,
 \item $T$ admits a module $H$ such that $T[H] \simeq \underline{3}$, $M$ is a twin of $T[H]$ and thus of $T$, and $\widetilde{M} = \{f(1)\}$ where $f$ is the isomorphism from $\underline{3}$ onto $T[H]$. 
\end{enumerate}
\end{rem}

\subsection{Proof of Proposition~\ref{prop Delta 2}} \label{subsection proof1}
We need the next three lemmas.

\begin{lem} \label{lem Delta=2}
 Given a tournament $T$ such that $\Delta(T) = 2$, consider a $\delta$-decomposition $\{M,N\}$ of $T$. The following assertions are satisfied.
 \begin{enumerate}
  \item We have $o_T(M) \leq 1$ and $o_T(N) \leq 1$ so that $\widetilde{M}$ and $\widetilde{N}$ are well-defined.
  \item Let $x \in \widetilde{M}$ and $y \in \widetilde{N}$. Suppose that the tournament $T'={\rm Inv}(T,\{x,y\})$ is decomposable. Let $L$ be a minimal co-module of $T'$ which is a nontrivial module of $T'$. The following two assertions hold. 
  \begin{enumerate}
  \item[$2.1$.] $L$ and $\{x,y\}$ overlap.
  \item[$2.2$.] We have $L \cap M \neq \varnothing$ and $L \cap N \neq \varnothing$. 
  \end{enumerate}
 \end{enumerate}
\end{lem}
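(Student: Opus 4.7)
For Assertion~1, Proposition~\ref{propo Delta 234} (Assertion~1) shows that when $\Delta(T)=2$, every $X\in{\rm mc}(T)$ satisfies $o_T(X)\le 1$; in particular $o_T(M)\le 1$ and $o_T(N)\le 1$, so Notation~\ref{Notation M} renders $\widetilde M$ and $\widetilde N$ well defined.

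For Assertion~2.1, I proceed by contradiction, assuming that $L$ and $\{x,y\}$ do not overlap. Since $T={\rm Inv}(T',\{x,y\})$, Assertion~1 of Lemma~\ref{lem comodules} then shows $L$ is a module of $T$, so both $L$ and $\overline L$ are co-modules of $T$. I split on whether $L\cap\{x,y\}=\varnothing$ or $\{x,y\}\subseteq L$, and in each subcase pick a minimal co-module $L_0$ of $T$ contained in $L$ or in $\overline L$, respectively. If $L_0\cap(M\cup N)=\varnothing$, then $\{M,N,L_0\}$ is a co-modular decomposition of $T$ of size $3$, contradicting $\Delta(T)=2$. Otherwise, up to swapping $M$ and $N$, $L_0\cap M\ne\varnothing$, and the minimality of $L_0,M\in{\rm mc}(T)$ forces either $L_0=M$ or $L_0$ overlaps $M$. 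In the overlap case, Lemma~\ref{degre G_T} and $o_T(M)\le 1$ give $L_0=M'$ and $\widetilde M=M\cap M'\subseteq L_0$; in the equality case $\widetilde M\subseteq M=L_0$. Either way $x\in\widetilde M\subseteq L_0$, contradicting $L_0\subseteq L\not\ni x$ in the first subcase and $L_0\subseteq\overline L\not\ni x$ in the second.

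For Assertion~2.2, Assertion~2.1 gives that exactly one of $x,y$ lies in $L$; up to swapping $(M,x)\leftrightarrow(N,y)$ I may assume $x\in L$ and $y\notin L$, so $L\cap M\supseteq\{x\}\ne\varnothing$ and the task reduces to proving $L\cap N\ne\varnothing$. I argue again by contradiction: if $L\cap N=\varnothing$ then $N\subseteq\overline L$, and my target is to produce a minimal co-module $L_0$ of $T$ inside $\overline L$ that meets $M$, since the same $\widetilde M$-analysis as in 2.1 would then force $x\in\widetilde M\subseteq L_0\subseteq\overline L$, contradicting $x\in L$. The production of $L_0$ is the main obstacle: unlike in 2.1, $L$ is not a module of $T$ (it overlaps $\{x,y\}$), so $\overline L$ is not automatically a co-module of $T$ and $L_0$ cannot simply be chosen inside $\overline L$. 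I plan to obtain $L_0$ by exploiting that $y\notin L$ implies $T[L]=T'[L]$, which is indecomposable by Remarks~\ref{fact1 min mod comod} and \ref{fact2 min mod indec}, combined with Assertion~2 of Lemma~\ref{comod part} (classifying which of $M,\overline M$ and $N,\overline N$ are nontrivial modules of $T$) and Assertions~2 and 3 of Lemma~\ref{lem comodules} (transferring module information between $T$ and $T'$). A case analysis on this classification should in each case expose a concrete co-module of $T$ inside $\overline L$ meeting $M$, yielding the required contradiction with $\Delta(T)=2$ and completing the proof.
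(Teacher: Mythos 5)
Your Assertion~1 is the paper's own argument. Your proof of Assertion~2.1 is correct and takes a genuinely different route from the paper: instead of the paper's case analysis built on unions such as $L\cup M$, $L\cup N$ and Assertion~3 of Lemma~\ref{comod part}, you descend to a minimal co-module $L_0$ of $T$ contained in the co-module $L$ (when $L\cap\{x,y\}=\varnothing$) or in $\overline{L}$ (when $\{x,y\}\subseteq L$), and you exploit the fact that any element of ${\rm mc}(T)$ meeting $M$ must equal $M$ or the unique element of $O_T(M)$, hence contains $\widetilde{M}\ni x$ (symmetrically for $N$ and $y$). This is clean, and arguably shorter than the paper's treatment of 2.1.

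The genuine gap is Assertion~2.2, which you do not prove. You reduce it to producing a minimal co-module $L_0$ of $T$ inside $\overline{L}$ that meets $M$, but you explicitly leave this production open (``a case analysis \dots should \dots expose a concrete co-module''), and the target itself is not clearly the right one: at that stage $x\in L$, and nothing excludes $M\subseteq L$, in which case no co-module contained in $\overline{L}$ can meet $M$ at all, so the plan would need to change shape rather than merely be filled in. The paper's proof of 2.2 is instead a short direct computation, which in your labelling runs as follows. Assume $L\cap N=\varnothing$. Since $L$ is a module of $T'$ and $y\notin L$, we have $y\equiv_{T'}L$; since $x\in L$, $|L|\ge 2$, and only the pair $\{x,y\}$ is reversed, $y\not\equiv_{T}L$. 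As $L\subseteq\overline{N}$ and $y\in N$, the set $\overline{N}$ is not a module of $T$, so $N$ is a nontrivial module of $T$. Now pick $z\in N\setminus\{y\}$ and $t\in L\setminus\{x\}$: from $T'(y,t)=T'(y,x)$ one gets $T(y,t)\neq T(y,x)$; the module $N$ of $T$ gives $T(y,t)=T(z,t)$ and $T(y,x)=T(z,x)$, hence $T(z,t)\neq T(z,x)$ and so $T'(z,t)\neq T'(z,x)$, contradicting that $L$ is a module of $T'$ with $x,t\in L$ and $z\notin L$. Without an argument of this kind (or a completed version of your case analysis), Assertion~2.2 remains unestablished.
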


\begin{proof} 
By Assertion 1 of Proposition~\ref{propo Delta 234}, we have $o_T(M) \leq 1$ and $o_T(N) \leq 1$ because $M, N \in \text{mc}(T)$. Therefore $\widetilde{M}$ and $\widetilde{N}$ are well-defined (see Notation~\ref{Notation M}). Thus, the first assertion is satisfied.    
 
We now prove Assertion~2.1. Suppose toward a contradiction that $L$ and $\{x,y\}$ do not overlap.  
By Assertion~1 of Lemma~\ref{lem comodules}, $L$ is also a nontrivial module of $T$.
By Assertion~2 of Lemma~\ref{comod part}, and by interchanging $M$ and $N$, we may assume that $M$ is a module of $T$.

First suppose $L \cap \{x,y\} = \varnothing$. In this instance, $L$ is also a minimal co-module of $T$ (see Remark~\ref{rem mc overlap}). Therefore, if $M \cap L \neq \varnothing$, then $\widetilde{M} = M \cap L$ so that $x \in M \cap L$, contradicting $L \cap \{x,y\} = \varnothing$. Thus $M \cap L = \varnothing$. Similarly, $N \cap L = \varnothing$. It follows that $\{L,M,N\}$ is a co-modular decomposition of $T$, which contradicts $\Delta(T) = 2$. 

Second suppose $\{x,y\} \subseteq L$. By Assertion~4 of Proposition~\ref{propo modules}, $L \cup M$ is a module of $T$.
Recall that $N$ or $\overline{N}$ is a module of $T$. 
To begin, suppose that $\overline{N}$ is a module of $T$. By Assertion~4 of Proposition~\ref{propo modules}, $\overline{N \setminus L} = L \cup \overline{N}$ is a module of $T$. By minimality of the co-module $N$ of $T$, the module $\overline{N \setminus L}$ of $T$ is trivial. Thus $N \subseteq L$. It follows that the module $L \cup M$ of $T$ contains $M \cup N$. Therefore, $V(T) = L \cup M$ by Assertion~3 of Lemma~\ref{comod part}. Thus $\overline{M \setminus L} = L$. It follows that $\overline{M \setminus L}$ is a nontrivial module of $T$, which contradicts the minimality of the co-module $M$ of $T$.
Now suppose that $N$ is a module of $T$. In this instance, since $L$, $M$ and $N$ are modules of $T$, then $L \cup M$, $L \cup N$ and $L \cup M \cup N$ are also modules of $T$ by Assertion~4 of Proposition~\ref{propo modules}. It follows from Assertion~3 of Lemma~\ref{comod part} that the module $L \cup M \cup N$ of $T$ is trivial. Therefore $V(T) = L \cup M \cup N$. Thus $\overline{M \setminus L} = L \cup N$.  Since $\overline{M \setminus L} = L \cup N$ is a module of $T$, it follows from the minimality of the co-module $M$ of $T$ that the module $L \cup N$ of $T$ is trivial. Thus $V(T) = L \cup N$. Similarly, we have $V(T) = L \cup M$. Since $V(T) = L \cup M = L \cup N$ and $M \cap N = \varnothing$, then $V(T) = L$, a contradiction because $L$ is a nontrivial module of $T$. This completes the proof of Assertion~2.1.               
 
For the proof of Assertion~2.2, since $L$ and $\{x,y\}$ overlap by Assertion~2.1, then by interchanging $M$ and $N$, we may assume $L \cap \{x,y\} = \{y\}$. Thus, we only have to prove that $L \cap M \neq \varnothing$. Suppose not. We have $x \equiv_{T'} L$  because $L$ is a module of $T'$ and $x \notin L$. Since $L \cap \{x,y\} = \{y\}$ and $|L| \geq 2$, we obtain $x \not\equiv_{T} L$. In particular, $x \not\equiv_{T} \overline{M}$ because $L \subseteq \overline{M}$. Therefore, $\overline{M}$ is not a module of $T$ so that $M$ is a nontrivial module of $T$. 
Since $L$ and $M$ are disjoint nontrivial modules, there are distinct $z,t \in V(T) \setminus \{x,y\}$ such that $\{x,z\} \subseteq M$ and $\{y,t\} \subseteq L$. Since $T'(x,t) = T'(x,y)$  
because $L$ is a module of $T'$, then $T(x,t) \neq T(x,y)$. Moreover, $T(x,t) = T(z,t)$ and $T(x,y) = T(z,y)$ because $M$ is a module of $T$. It follows that $T(z,y) \neq T(z,t)$ and thus $T'(z,y) \neq T'(z,t)$, a contradiction because $L$ is a module of $T'$, $\{y,t\} \subseteq L$ and $z \notin L$. Thus $L \cap M \neq \varnothing$ as desired.     
\end{proof}

\begin{lem} \label{lem2 Del2}
 Let $T$ be a tournament such that $\Delta(T) = 2$ and satisfying the following hypothesis~(\ref{eq H}).
 \begin{equation} \label{eq H}
  \text{For every} \ x \in V(T), \ \text{the tournament} \ T-x \ \text{is decomposable}.
  \tag{$H$}
 \end{equation}
Consider a $\delta$-decomposition $\{M,N\}$ of $T$ such that $M$ is a nontrivial module of $T$. Recall that $\widetilde{M}$ and $\widetilde{N}$ are well-defined (see Assertion~1 of Lemma~\ref{lem Delta=2}). Let $x \in \widetilde{M}$ and $y \in \widetilde{N}$. If the tournament $T' = {\rm Inv}(T, \{x,y\})$ is decomposable, then the following three assertions are satisfied.
\begin{enumerate}
 \item $N$ is not a nontrivial module of $T$. In particular $\widetilde{N} = N$.
 \item Given a minimal co-module $L$ of $T'$, if $L$ is a nontrivial module of $T'$, then $L = \{x,z\}$ for some $z \in N \setminus \{y\}$.
 \item There exists a unique vertex $z \in N \setminus \{y\}$ such that $\{x,z\}$ is a twin of $T'$. 
\end{enumerate}
 \end{lem}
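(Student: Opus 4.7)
I plan to establish the three assertions in order: Assertion~1 by a contradiction that produces a third disjoint co-module, Assertion~2 by a parallel contradiction combined with a minimality argument, and Assertion~3 by Assertion~2 plus a direct arc computation.

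For Assertion~1, suppose for contradiction that $N$ is also a nontrivial module of $T$, so that $M$ and $N$ are two disjoint nontrivial modules of $T$. Since $T'$ is decomposable with $v(T') \geq 5$, Remark~\ref{rem cherifa} furnishes a minimal co-module $L$ of $T'$ which is a nontrivial module of $T'$, and Assertions~2.1 and 2.2 of Lemma~\ref{lem Delta=2} give $|L \cap \{x,y\}|=1$ together with $L \cap M \neq \varnothing$ and $L \cap N \neq \varnothing$. By symmetry of the roles of $M$ and $N$, I interchange them if needed and assume $L \cap \{x,y\} = \{y\}$. Applying Assertion~2 of Lemma~\ref{lem comodules} to $L$ (a module of $T'$) and $M$ (a module of $T$), combined with the minimality of $L$ in $\text{mc}(T')$ and of $M$ in $\text{mc}(T)$, together with the exclusion of $L \supseteq M$ (which fails since $x \in M \setminus L$), forces $|L \cap M| = 1$. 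Using this rigidity, the module structure of $N$, and hypothesis~(H) (which guarantees decomposability of $T-x$ and $T-y$ and so provides auxiliary nontrivial modules), I construct a third co-module of $T$ disjoint from $M$ and $N$, contradicting $\Delta(T) = 2$. Since then $N \in \text{mc}(T)$ is not a nontrivial module of $T$, Lemma~\ref{degre G_T} forces $o_T(N) = 0$ (otherwise $N$ would be a twin, hence a nontrivial module), and so $\widetilde{N} = N$.

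For Assertion~2, I take the given $L$. Lemma~\ref{lem Delta=2} again yields $|L \cap \{x,y\}| = 1$ and nonempty intersections of $L$ with $M$ and $N$. The first step is to exclude $L \cap \{x,y\} = \{y\}$. In that case, although we no longer have the symmetry of Assertion~1, we may use that $\overline{N}$ is a nontrivial module of $T$ (by Assertion~1) in place of $N$ and replay essentially the same third-co-module construction, again contradicting $\Delta(T) = 2$. Hence $L \cap \{x,y\} = \{x\}$, so $x \in L$ and $y \notin L$, and by Lemma~\ref{lem Delta=2} 2.2 we can pick $z \in L \cap N$ with $z \neq y$. To conclude $L = \{x, z\}$, I apply Assertion~2 of Lemma~\ref{lem comodules} to $L$ and $\overline{N}$: the minimality of $L$ as a nontrivial module of $T'$ (see Remark~\ref{fact1 min mod comod}) combined with the $\Delta(T) = 2$ constraint (any further proper co-module of $T$ disjoint from $M$ and $N$ would yield a forbidden $3$-decomposition) forces $L \cap \overline{N} = \{x\}$ and $|L \cap N| = 1$, hence $L = \{x, z\}$.

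For Assertion~3, existence is immediate from Assertion~2 applied to an $L$ furnished by Remark~\ref{rem cherifa} for the decomposable $T'$: the resulting $L$ is a module of $T'$ of size $2$, i.e., a twin. For uniqueness, suppose $z_1 \neq z_2$ are two vertices of $N \setminus \{y\}$ with both $\{x, z_1\}$ and $\{x, z_2\}$ twins of $T'$. Applying the twin condition of $\{x, z_1\}$ at $w = z_2$ and of $\{x, z_2\}$ at $w = z_1$ (neither involves the reversed arc $(x,y)$) gives $T(z_2, x) = T(z_2, z_1)$ and $T(z_1, x) = T(z_1, z_2)$; since $T(z_1, z_2)$ and $T(z_2, z_1)$ are opposite, this forces $T(x, z_1) \neq T(x, z_2)$. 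On the other hand, $\overline{N}$ is a nontrivial module of $T$ by Assertion~1, so $|\overline{N}| \geq 2$ and we may choose $w \in \overline{N} \setminus \{x\}$; the twin conditions at $w$ yield $T(w, z_1) = T(w, z_2)$, and the module property of $\overline{N}$ applied at $z_1, z_2 \in N$ yields $T(x, z_1) = T(w, z_1) = T(w, z_2) = T(x, z_2)$, a contradiction. Hence $z$ is unique.

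The principal obstacle is the construction of the third disjoint co-module of $T$ in the contradiction steps of Assertions~1 and 2: the module structures of $M$, $N$ (or $\overline{N}$), and $L$ must be woven together with hypothesis~(H) to extract an auxiliary nontrivial module disjoint from $M \cup N$. Once that obstruction is overcome, the remaining minimality step of Assertion~2 and the arc computation of Assertion~3 are relatively straightforward.
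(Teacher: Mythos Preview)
Your Assertion~3 argument is complete and correct, and in fact slightly more direct than the paper's (the paper shows that $\{z,z'\}$ would be a nontrivial module of $T$ contained in $N$, whereas your arc computation reaches a contradiction faster).

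The real problem is Assertions~1 and~2. You correctly identify the ``principal obstacle'' as the construction of a third co-module of $T$ disjoint from $M$ and $N$, but you do not carry it out, and there is no indication that this route actually works. In the configuration you describe (say $L\cap\{x,y\}=\{y\}$, $|L\cap M|=1$, $L\cap N\neq\varnothing$), it is not at all evident that a co-module of $T$ avoiding $M\cup N$ must appear; nothing in the hypotheses forces the complement of $M\cup N$ to carry any module structure. The paper does \emph{not} proceed this way. For Assertion~1 it never builds a third co-module and never invokes hypothesis~(H): instead it uses Assertion~3 of Lemma~\ref{lem comodules} to show that $L\cup N$ (and then $L\cup M\cup N$) is a module of $T$, and then the $\Delta$-maximality of $\{M,N\}$ together with the minimality of $M$ forces $L\cup N=V(T)$, hence $M\subseteq L$, which immediately contradicts that $L$ is a module of $T'$ (since $y\equiv_T M$ but $y\not\equiv_{T'}M$).

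For Assertion~2 the paper likewise does not manufacture a third disjoint co-module. In the case $L\cap\{x,y\}=\{y\}$ it again uses module unions (now with $\overline{N}$ in place of $N$) to force $V(T)=L\cup M=M\cup N$, and only \emph{then} does hypothesis~(H) enter: either $N=\{y\}$ makes $T-y=T[M]$ indecomposable, or $L=\overline{\{x\}}$ makes $T-x=T'-x=T'[L]$ indecomposable, or a direct arc comparison gives a contradiction. Your plan to ``replay the third-co-module construction'' is therefore aiming at a target that the paper never hits and that you have not shown is hittable. I would abandon the third-co-module strategy and instead exploit Assertion~3 of Lemma~\ref{lem comodules} to take unions of the form $L\cup N$, $L\cup\overline{N}$, $L\cup M$ and push them up to $V(T)$ via $\Delta$-maximality and co-module minimality.
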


\begin{proof}
We easily verify that no tournament $U$ on at most four vertices satisfies both hypotheses~(\ref{eq H}) and $\Delta(U) = 2$.
Thus $v(T) \geq 5$. Suppose that the tournament $T' = \text{Inv}(T, \{x,y\})$ is decomposable. It follows from Remark~\ref{rem cherifa} that $T'$ admits a minimal co-module $L$ which is a nontrivial module of $T'$.

For the first assertion, suppose for a contradiction that $N$ is a nontrivial module of $T$. In this instance, we may interchange $M$ and $N$. Thus, by Assertion~2.1 of Lemma~\ref{lem Delta=2}, we can suppose $L \cap \{x,y\} = \{x\}$. Since $L \cap N \neq \varnothing$ by Assertion~2.2 of Lemma~\ref{lem Delta=2}, then $L \cup N$ is a module of $T$ by Assertion~3 of Lemma~\ref{lem comodules}. By Assertion~4 of Proposition~\ref{propo modules}, $L \cup N \cup M$ is a module of $T$. It follows from Assertion~3 of Lemma~\ref{comod part} that 
$L \cup N \cup M = V(T)$. Thus $\overline{M \setminus L} = L \cup N$. Since $M \setminus L$ is not a co-module of $T$ by minimality of the co-module $M$ of $T$, it follows that the module $L \cup N$ of $T$ is trivial. Therefore, $L \cup N = V(T)$ and thus $M \subseteq L$. On the other hand, since $y \equiv_T M$ and $|M| \geq 2$ because $M$ is a nontrivial module of $T$, then $y \not\equiv_{T'} M$. Since $M \subseteq L$, it follows that $y \not\equiv_{T'} L$, which contradicts that $L$ is a module of $T'$. 
 Thus, $N$ is not a nontrivial module of $T$, as desired. Therefore, $\widetilde{N} = N$ because $o_T(N) = 0$ by Lemma~\ref{degre G_T}.  

We now prove the second assertion. To begin, we prove that $L \cap \{x,y\} = \{x\}$. Suppose not. 
By Assertion~2.1 of Lemma~\ref{lem Delta=2}, we have $L \cap \{x,y\} = \{y\}$. Recall that $N$ is not a nontrivial module of $T$ by the first assertion of the lemma. Thus, $\overline{N}$ is a module of $T$. 
It follows from Assertions~2 and 1 of Lemma~\ref{lem comodules} that  $L \cap \overline{N}$ is a module of $T'$. More precisely, $L \cap \overline{N}$ is a trivial module of $T'$ because $L \cap \overline{N} \varsubsetneq L$ and $L$ is a minimal co-module of $T'$.   
Moreover, $L \cap M \neq \varnothing$ by Assertion~2.2 of Lemma~\ref{lem Delta=2}. Thus $|L \cap \overline{N}| = |L \cap M| =1$. It follows that $L \subseteq M \cup N$.   
 On the other hand, $\overline{N} \cup L$ is a module of $T$ by Assertion~3 of Lemma~\ref{lem comodules}. But $\overline{N} \cup L = \overline{N \setminus L}$. It follows from the minimality of the co-module $N$ of $T$ that the module $\overline{N \setminus L}$ of $T$ is trivial. 
  Therefore $N \setminus L = \varnothing$, i.e., $N \subseteq L$. Thus $L \cup M  \cup N = L \cup M$. Since $L \cup M$ is a module of $T$ by Assertion~3 of Lemma~\ref{lem comodules}, it follows that $V(T) = L \cup M$ by Assertion~3 of Lemma~\ref{comod part}. Recall that $L \subseteq M \cup N$. Thus
 \begin{equation} \label{abc}
  V(T) = L \cup M = M \cup N.
 \end{equation}
Notice that since $M$ is a minimal nontrivial module of $T$ (see Remark~\ref{fact1 min mod comod}), the tournament $T[M]$ is indecomposable (see Remark~\ref{fact2 min mod indec}). It follows that $N \neq \{y\}$, otherwise $T[M] = T-y$ because $V(T) = M \cup N$  (see (\ref{abc})), so that $T-y$ is indecomposable contrary to the hypothesis~(\ref{eq H}).
Suppose for a contradiction that $L \neq \overline{\{x\}}$. 
Since $V(T) \setminus (L \cup \{x\}) = M \setminus (L \cup \{x\})$ because $V(T) = M \cup L$ (see (\ref{abc})), then $M \setminus (L \cup \{x\}) \neq \varnothing$ because $L \neq \overline{\{x\}}$.
Pick $u \in  N \setminus \{y\}$ and $w \in M \setminus (L \cup \{x\})$. We have $T(w,y) = T(x,y)$ and $T(w,u) = T(x,u)$ because $M$ is a module of $T$. 
Moreover, since $L$ is a module of $T'$, $\{y,u\} \subseteq N \subseteq L$ (see (\ref{abc})) and $x \notin L$, then $T'(x,y) = T'(x,u)$ and thus $T(x,y) \neq T(x,u)$. It follows that $T(w,y) \neq T(w,u)$ and thus $T'(w,y) \neq T'(w,u)$, a contradiction because $L$ is a module of $T'$. Therefore $L = \overline{\{x\}}$. It follows from Remarks~\ref{fact1 min mod comod} and \ref{fact2 min mod indec} that $T'[L] = T'-x$ is indecomposable. Since $T'-x = T-x$, this again contradicts the hypothesis~(\ref{eq H}). We conclude that $L \cap \{x,y\} = \{x\}$ as claimed.   
 
 Since $L$ is a module of $T'$, we have $y \equiv_{T'} L$.
 Moreover, since $\overline{N}$ is a nontrivial module of $T$ by the first assertion of the lemma, then $y \equiv_{T} \overline{N}$ so that $y \not\equiv_{T'} \overline{N}$ and $y \equiv_{T'} \overline{N} \setminus \{x\}$. Therefore, if $\{x\} \varsubsetneq L \cap \overline{N}$, then $y \not\equiv_{T'} L \cap \overline{N}$, which contradicts $y \equiv_{T'} L$. Thus $L \cap \overline{N} = \{x\}$. On the other hand, since $y \notin L$, and $L \cap N \neq \varnothing$ by Assertion 2.2 of Lemma~\ref{lem Delta=2}, then $L \cap (N \setminus \{y\}) \neq \varnothing$. Let $z \in L \cap (N \setminus \{y\})$.
 Since $L \cap \overline{N} = \{x\}$, to show that $L = \{x,z\}$, which finishes the proof of the second assertion, it suffices to show that $L \cap N = \{z\}$. 
 By Assertion~1 of Proposition~\ref{propo modules}, $M$ and $L$ are modules of $T-y = T'-y$. Since $M \setminus L \neq \varnothing$ because $|M| \geq 2$ and $L \cap \overline{N} =\{x\}$, then $L \setminus M$ is a module of $T-y$ by Assertion~5 of Proposition~\ref{propo modules}. Moreover, since $y \equiv_{T'} L$ and thus $y \equiv_{T} L \setminus \{x\}$, then  $y \equiv_{T} L \setminus M$. It follows that $L \setminus M$ is a module of $T$. But $L \setminus M = L \cap N$ because $L \cap \overline{N} = \{x\} \subseteq M$. Since $L \cap N \varsubsetneq N$, 
 it follows from the minimality of the co-module $N$ of $T$ that the module $L \cap N$ of $T$ is trivial. Since $z \in N \cap L$ and $ N \cap L \varsubsetneq V(T)$, we obtain $N \cap L = \{z\}$ as claimed. 
 
 Lastly, we prove the third assertion. It follows from the second assertion of the lemma and from Remark~\ref{rem cherifa} that there exists $z \in N \setminus \{y\}$ such that $\{x,z\}$ is a twin of $T'$. Let $z' \in N \setminus \{y\}$ such that $\{x,z'\}$ is a twin of $T'$. We will prove that $\{z,z'\}$ is a module of $T$, which implies that $z=z'$ because $N$ is a minimal co-module of $T$. By Assertion~4 of Proposition~\ref{propo modules}, $\{x,z,z'\}$ is a module of $T'$ and thus of
 $T'-y=T-y$. Since $M$ and $\{x,z,z'\}$ are modules of $T-y$, and $M \setminus \{x,z,z'\} = M \setminus \{x\} \neq \varnothing$ because the module $M$ is nontrivial, then $\{x,z,z'\} \setminus M = \{z,z'\}$ is a module of $T-y$ by Assertion~5 of Proposition~\ref{propo modules}. Moreover, since $T'(y,z) = T'(y,z')$ because $\{x,z,z'\}$ is a module of $T'$, then $T(y,z) = T(y,z')$. It follows that $\{z,z'\}$ is a module of $T$, completing the proof.
 \end{proof}

The following lemma complements Lemma~\ref{lem2 Del2} when the hypothesis~(\ref{eq H}) is not satisfied. 

\begin{lem} [\cite{Index}] \label{lem fond}
Given a decomposable tournament $T$ such that $v(T)\geq 5$, if there exists $x \in V(T)$ such that $T-x$ is indecomposable, then $\delta(T)=1$. 
\end{lem}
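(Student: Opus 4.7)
The plan is to exhibit an arc $a \in A(T)$ whose reversal makes $T$ indecomposable; combined with $\delta(T) \geq 1$ (which follows since $T$ is decomposable), this yields $\delta(T) = 1$. Observe first that the hypothesis forces $v(T) \geq 6$, since no tournament on four vertices is indecomposable. The first step is to use the indecomposability of $T-x$ to sharply constrain the nontrivial modules of $T$. For any nontrivial module $M$ of $T$, Assertion~1 of Proposition~\ref{propo modules} makes $M \cap V(T-x)$ a module of $T-x$, hence trivial in $T-x$, and the combination of $|M| \geq 2$, $|M| \leq v(T)-1$ and $v(T) \geq 6$ forces $M$ to be either a twin $\{x,y\}$ containing $x$ or the set $V(T) \setminus \{x\}$ (equivalently, $x$ is a source or a sink of $T$). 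A further use of Assertion~4 of Proposition~\ref{propo modules} shows that at most one twin contains $x$: two distinct such twins $\{x,y\}$, $\{x,y'\}$ would yield a module $\{x,y,y'\}$ of $T$ whose trace $\{y,y'\}$ is a nontrivial module of $T-x$. Consequently the nontrivial modules of $T$ form a subset of $\{\{x,y\}, V(T) \setminus \{x\}\}$.

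The strategy is then to pick $z \in V(T) \setminus \{x\}$ (distinct from $y$ when the twin is present) and set $T' = {\rm Inv}(T,\{x,z\})$. Assume for contradiction that $T'$ has a nontrivial module $N$, and apply Assertion~1 of Lemma~\ref{lem comodules}. If $N$ and $\{x,z\}$ do not overlap, then $N$ is a nontrivial module of $T$, hence equal to $\{x,y\}$ or $V(T) \setminus \{x\}$; both overlap $\{x,z\}$ whenever $z \neq y$, a contradiction. Otherwise $N$ and $\{x,z\}$ overlap, and the indecomposability of $T-x = T'-x$ applied to $N \setminus \{x\}$ (when $x \in N$) or to $N$ itself (when $x \notin N$) pins $N$ into one of two restricted shapes: either $N = \{x,w\}$ where $w$ is an ``almost-twin'' of $x$ with sole difference vertex $z$ (that is, $T(u,x) = T(u,w)$ for $u \notin \{x,w,z\}$ and $T(z,w) \neq T(z,x)$), or $N = V(T) \setminus \{x\}$ with $x$ a source or sink of $T'$.

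The main obstacle is the combinatorial verification that $z$ can be chosen to preclude both outcomes. The second outcome forces $x$ to dominate (respectively be dominated by) all vertices of $T$ except $z$, which excludes at most one candidate for $z$. The first outcome requires $z$ to be the unique difference vertex of some almost-twin $w$ of $x$; a key subargument shows that two almost-twins with the same singleton difference set would form a twin in $T-x$, contradicting indecomposability, so the bad $z$'s arising this way are in bijection with the almost-twins themselves, and a structural count inside the indecomposable tournament $T-x$ (exploiting that near-sources or near-sinks of $T-x$ cannot be too numerous without producing a nontrivial module) bounds their number. Splitting into the three possible structural cases---only the twin, only the source/sink, or both---and using $v(T) \geq 6$ then guarantees at least one valid candidate $z$, completing the proof.
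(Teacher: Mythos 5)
First, note that the paper does not actually prove Lemma~\ref{lem fond}: it is quoted from \cite{Index}, so there is no in-paper argument to compare yours against, and your proposal has to stand on its own. Its opening steps are correct: the classification of the nontrivial modules of $T$ as at most one twin $\{x,y\}$ through $x$ together with possibly $V(T)\setminus\{x\}$, the reduction (via Assertion~1 of Lemma~\ref{lem comodules} and the indecomposability of $T-x=T'-x$) of a hypothetical nontrivial module of $T'={\rm Inv}(T,\{x,z\})$ to the two shapes you describe, the fact that your second outcome excludes at most one candidate $z$, and the injectivity of the map sending an ``almost-twin'' of $x$ to its difference vertex are all verifiable.

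However, the decisive step is missing. What must be shown is that the bad choices of $z$ --- namely $y$, the at most one vertex coming from your second outcome, and the difference vertices of the almost-twins --- do not exhaust $V(T)\setminus\{x\}$; this is where essentially all the work lies, and you only assert it (``a structural count \ldots bounds their number''). The hint you give (near-sources or near-sinks of $T-x$ cannot be numerous) fits only the case where $x$ is a source or sink of $T$: there the almost-twins of $x$ are exactly the vertices of in-degree (or out-degree) $1$ in $T-x$, and three of them would form a $3$-cycle module of $T-x$. In the twin case $\{x,y\}$ the almost-twins of $x$ are instead near-copies of $y$ (out- or in-neighbours of $y$, according to the orientation of the twin arc, disagreeing with $y$ in exactly one place), and bounding how many difference vertices they contribute requires a different count, e.g.\ observing that each such $w$ must beat every vertex of $N^{+}(y)\setminus\{w,z_w\}$, so the in-degrees inside $N^{+}(y)$ cap the number of almost-twins. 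Even with such a bound, the extremal size $v(T)=6$ is tight: three almost-twins can occur, and one must further check that the twin-arc orientation they force makes the out-degree of $x$ incompatible with your second outcome (or makes that excluded vertex coincide with $y$), so that at least one candidate $z$ survives. These verifications can be carried through, so the strategy appears completable, but as written the proposal stops exactly short of proving the point on which the lemma turns.
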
 

\begin{proof}[Proof of Proposition~\ref{prop Delta 2}]
Let $T$ be a tournament with at least five vertices.
If $\delta(T) =1$, then $\Delta(T) = 2$ by (\ref{eq dec delta}) and (\ref{eq deltaDelta}). Conversely, suppose $\Delta(T) =2$.
By Lemma~\ref{lem fond}, to prove that $\delta(T) =1$, we can suppose that for every $x \in V(T)$, the tournament $T-x$ is decomposable. Consider a $\delta$-decomposition $\{M,N\}$ of $T$. By Remark~\ref{rem cherifa} and by interchanging $M$ and $N$, we may assume that $M$ is a nontrivial module of $T$. By Assertion~1 of Lemma~\ref{lem Delta=2}, $\widetilde{M}$ and $\widetilde{N}$ are well-defined. Fix $x \in \widetilde{M}$. Suppose toward a contradiction that for every $y \in \widetilde{N}$, $\text{Inv}(T, \{x,y\})$ is decomposable. By Assertion~1 of Lemma~\ref{lem2 Del2}, we have $N = \widetilde{N}$. 
It follows from Assertion~3 of Lemma~\ref{lem2 Del2} that there exists a sequence $(z_n)_{n \in \mathbb{N}}$ of vertices of $N$ such that for every positive integer $n$, $\{x,z_n\}$ is a twin of $\text{Inv}(T,\{x,z_{n-1}\})$, and $z_n \neq z_{n-1}$. Since $N$ is finite and  $\{z_n: n \in \mathbb{N}\} \subseteq N$, the vertices $z_n$ are not pairwise distinct. Consider the smallest positive integer $k$ such that $z_k \in \{z_0, \ldots, z_{k-1}\}$. Since $z_k \neq z_{k-1}$, we have $k \geq 2$ and $z_k \in \{z_0, \ldots, z_{k-2}\}$. For every integer $n$, let $T_n = \text{Inv}(T,\{x,z_{n}\})$.  If $z_k \neq z_0$, i.e., $k\geq 3$ and $z_k = z_i$ for some  $i \in \{1, \ldots k-2\}$, then $\{x,z_k\}$ is a module of both $T_{k-1}$ and $T_{i-1}$, which is not possible because $z_{k-1} \neq z_{i-1}$. Thus $z_k = z_0$. To show that $\{z_0, \ldots, z_{k-1}\}$ is a module of $T$, we first consider a vertex $v \in V(T) \setminus (\{z_0, \ldots, z_{k-1}\} \cup \{x\})$. For every $j \in \{0, \ldots, k-1\}$, we have $T_{j}(v,z_{j+1}) = T_{j}(v,x)$ and thus $T(v,z_{j+1}) = T(v,x)$. It follows that $v \equiv_T \{z_1, \ldots, z_k\}$. Since $\{z_1, \ldots, z_{k}\} =\{z_0, \ldots, z_{k-1}\}$ because $z_k=z_0$, we obtain
   \begin{equation} \label{eq proof propo13}
   v \equiv_T \{z_0, \ldots, z_{k-1}\} \ \text{for every} \ v \in V(T) \setminus (\{z_0, \ldots, z_{k-1}\} \cup \{x\}). 
   \end{equation}
   Thus, $\{z_0, \ldots, z_{k-1}\}$ is a module of $T-x$. 
   Moreover, we have $x \equiv_T \{z_0, \ldots, z_{k-1}\}$ because $\overline{N}$ is a nontrivial module of $T$ (see Assertion~1 of Lemma~\ref{lem2 Del2}), $\{x\} \varsubsetneq \overline{N}$, and for every $w \in \overline{N} \setminus \{\overline{}x\}$, we have $w \equiv_T \{z_0, \ldots, z_{k-1}\}$ by (\ref{eq proof propo13}). It follows that $\{z_0, \ldots, z_{k-1}\}$ is a module of $T$. Moreover, the module $\{z_0, \ldots, z_{k-1}\}$ of $T$ is nontrivial because $k \geq 2$. Therefore, since $\{z_0, \ldots, z_{k-1}\} \subseteq N$ and $N$ is not a nontrivial module of $T$ (see Assertion~1 of Lemma~\ref{lem2 Del2}), we obtain $\{z_0, \ldots, z_{k-1}\} \varsubsetneq N$. This contradicts the minimality of the co-module $N$ of $T$. We conclude that there exists a vertex $y \in \widetilde{N}$ such that $\text{Inv}(T,\{x,y\})$ is indecomposable. Thus $\delta(T) =1$.          
\end{proof}

\begin{Discussion} \normalfont
Let $T$ be a tournament with at least five vertices such that $\Delta(T) = 2$, i.e., such that $\delta(T) = 1$ (see Proposition~\ref{prop Delta 2}). Suppose a $\delta$-decomposition $\{M,N\}$ of $T$ is given. We would like to discuss the following question. For which arcs $a$ of $T$, is the tournament $\text{Inv}(T,a)$ indecomposable?

Suppose that $\text{Inv}(T,a)$ is indecomposable. Recall that $o_T(M) \leq 1$ and $o_T(N) \leq 1$ (see Assertion~1 of Lemma~\ref{lem Delta=2}). Clearly, $\mathcal{V}(a) \cap M \neq \varnothing$ and $\mathcal{V}(a) \cap N \neq \varnothing$ (see Remark~\ref{rem mc overlap}). We will prove that $\mathcal{V}(a) \cap \widetilde{M} \neq \varnothing$ and $\mathcal{V}(a) \cap \widetilde{N} \neq \varnothing$. Suppose for a contradiction that $\mathcal{V}(a) \cap \widetilde{M} = \varnothing$. In this instance, $\widetilde{M} \neq M$. By Remark~\ref{rem1} and up to isomorphism, we can suppose that $\{0,1,2\}$ is a module of $T$, $T[\{0,1,2\}] = \underline{3}$, and $\widetilde{M} = \{1\}$. By Assertion~1 of Lemma~\ref{lem comodules}, since $1 \not\in \mathcal{V}(a)$ and $\{0,1\}$ (resp. $\{1,2\}$) is a module of $T$, then $0 \in \mathcal{V}(a)$ (resp. $2 \in \mathcal{V}(a)$). Thus $\mathcal{V}(a) = \{0,2\}$. Again by Assertion~1 of Lemma~\ref{lem comodules}, $\{0,1,2\}$ is a module of the indecomposable tournament $\text{Inv}(T,a)$, a contradiction. Thus $\mathcal{V}(a) \cap \widetilde{M} \neq \varnothing$. Similarly, we have $\mathcal{V}(a) \cap \widetilde{N} \neq \varnothing$.     

Conversely, let $x \in \widetilde{M}$ and $y \in \widetilde{N}$. Suppose that $M$ and $N$ are nontrivial modules of $T$. In this instance the hypothesis~(\ref{eq H}) of Lemma~\ref{lem2 Del2} is satisfied. It follows from Assertion~1 of Lemma~\ref{lem2 Del2} that $\text{Inv}(T, \{x,y\})$ is indecomposable. Here we would like to note that we often have $\widetilde{M} = M$ and $\widetilde{N} = N$ (see Remark~\ref{rem1}). For example, this is the case if each of the modules M and N contains at least three vertices. In this instance, $\text{Inv}(T, \{x,y\})$ is indecomposable for every $x \in M$ and $y \in N$. 

Lastly, recall that we can suppose that $M$ is a nontrivial module of $T$ (see Remark~\ref{rem cherifa}). Suppose that the hypothesis~(\ref{eq H}) is satisfied. By the proof of Proposition~\ref{prop Delta 2}, for every $x \in \widetilde{M}$, there exists $y \in \widetilde{N}$ such that $\text{Inv}(T, \{x,y\})$ is indecomposable. For some details about the case where the hypothesis~(\ref{eq H}) is not satisfied, see the proof of {\cite[Lemma 5.1]{Index}}.             
\end{Discussion}

\subsection{Proofs of Propositions \ref{prop Delta 3} and \ref{prop Delta 4}} \label{subsection proof2}

We need the following lemma.

\begin{lem} \label{lem config}
 Given a tournament $T$ such that $\Delta(T) \geq 3$, consider a co-modular decomposition $\{M,N,L\}$ of $T$ such that $M \in {\rm mc}(T)$. Suppose there are $x \in M$ and $y \in N$ such that $x \equiv_T L$, $y \equiv_T L$, and $T(x,L) \neq T(y,L)$. Consider the tournament $T'={\rm Inv}(T, \{x,y\})$, and let $I \in {\rm mc}(T')$. If $I \cap M \neq \varnothing$ and $I \cap N = \varnothing$, then $x\notin I$ and $I \in O_T(M)$.   
\end{lem}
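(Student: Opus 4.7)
Without loss of generality assume $T(x,L)=1$ and $T(y,L)=0$; note that only the arc between $x$ and $y$ differs between $T$ and $T'$. Since $y\in N$ and $I\cap N=\varnothing$, we immediately have $y\notin I$, so the whole content of the lemma is the claim $x\notin I$: once that is shown, $I\cap\{x,y\}=\varnothing$, Remark~\ref{rem mc overlap} yields $I\in\text{mc}(T)$, and since $I\cap M\neq\varnothing$ while $x\in M\setminus I$ shows $I\neq M$, the two minimal co-modules $I$ and $M$ of $T$ share a point and are distinct, hence overlap, giving $I\in O_T(M)$.

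To prove $x\notin I$, I argue by contradiction and assume $x\in I$. Then $I\cap\{x,y\}=\{x\}$, and two subcases arise according to whether $I=\{x\}$ or $I$ properly overlaps $\{x,y\}$ in $T'$.

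In the first subcase, $I=\{x\}$ means $\overline{\{x\}}$ is a nontrivial module of $T'$, so $T'(x,\cdot)$ is constant on $\overline{\{x\}}$. Picking $\ell\in L$ (nonempty by (\ref{eq comod non vide})), we get $T'(x,\ell)=T(x,\ell)=1$ and $T'(x,y)=T(y,x)$, whence $T(x,y)=0$. Since $N$ is a co-module of $T$, either $N$ or $\overline{N}$ is a nontrivial module of $T$. If $N$ is such a module and $|N|\geq 2$, then $x\equiv_T N$ forces $T(x,y')=T(x,y)=0$ for $y'\in N\setminus\{y\}$, while $T'(x,y')=T(x,y')$ would have to equal $T'(x,\ell)=1$, a contradiction. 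Otherwise $\overline{N}$ is a nontrivial module of $T$ (automatically so when $N=\{y\}$); then $L\subseteq\overline{N}$ and $y\equiv_T\overline{N}$ together with $T(y,L)=0$ force $T(y,x)=0$, i.e.\ $T(x,y)=1$, contradicting $T(x,y)=0$.

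In the second subcase, $I\setminus\{x,y\}\neq\varnothing$, so $I$ overlaps $\{x,y\}$ in $T'$, and the nontrivial module among $\{I,\overline{I}\}$ in $T'$ therefore fails to be a module of $T$ by Assertion~1 of Lemma~\ref{lem comodules}. Either $M$ or $\overline{M}$ is a nontrivial module of $T$; the plan is to intersect this $T$-module with the $T'$-module between $I$ and $\overline{I}$, apply Assertion~2 of Lemma~\ref{lem comodules}, and use the reversal identity $T'(y,x)=T(x,y)$ combined with the imbalance $T(x,L)\neq T(y,L)$ to verify that the resulting intersection is a \emph{nontrivial} module of $T$ strictly contained in $M$ (rather than merely of $T'$, and rather than trivial), thereby contradicting $M\in\text{mc}(T)$. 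The main obstacle is exactly this bookkeeping, which requires splitting on which of $I,\overline{I}$ is the $T'$-module and on which of $M,N,L$ (or their complements) are the actual $T$-modules, and then ruling out each of the trivial outcomes for the intersection using the hypothesis on $L$.
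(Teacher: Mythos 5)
Your reduction is sound and matches the paper's: since $y\in N$ and $I\cap N=\varnothing$ we have $y\notin I$, so once $x\notin I$ is established, Remark~\ref{rem mc overlap} gives $I\in{\rm mc}(T)$, and $x\in M\setminus I$ together with $I\cap M\neq\varnothing$ forces $I\in O_T(M)$. Your first subcase ($I=\{x\}$, so $\overline{\{x\}}$ is a nontrivial module of $T'$) is also correct as written. The genuine gap is your second subcase, which is where essentially all of the work of the lemma lies: you do not prove it, you only announce a strategy (``the plan is to intersect\ldots'', ``the main obstacle is exactly this bookkeeping''), so the argument is incomplete by your own account.

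Moreover, the announced plan is doubtful, not merely unfinished. Your target contradiction is that the intersection of the $T$-module among $\{M,\overline{M}\}$ with the $T'$-module among $\{I,\overline{I}\}$ is a nontrivial module of $T$ \emph{strictly contained in} $M$. But (i) a minimal co-module $M$ need not itself be a module of $T$; if the nontrivial $T$-module is $\overline{M}$, the intersection lies inside $\overline{M}$, not $M$, and cannot contradict the minimality of $M$ in the way you describe; (ii) even when $M$ is a nontrivial module of $T$ and $I$ is a nontrivial module of $T'$, the outcomes $M\cap I=\{x\}$ and $M\subseteq I$ are not excluded by anything you state and yield no contradiction --- note that $M$ overlaps $\{x,y\}$, hence is \emph{not} a module of $T'$ (Assertion~1 of Lemma~\ref{lem comodules}) and need not be a co-module of $T'$ at all, so $M\varsubsetneq I$ is not ruled out by the minimality of $I$; and in this branch the hypothesis on $L$ is not obviously of any help, since the paper's treatment of it uses no property of $L$ whatsoever. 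The paper's mechanism in both of its cases is different from yours: if $I$ is a nontrivial module of $T'$, then $y\equiv_{T'}I$, $x\in I$, $|I|\geq 2$ and $I\subseteq\overline{N}$ force $N$ to be a nontrivial module of $T$, and choosing $y'\in N\setminus\{y\}$, $x'\in I\setminus\{x\}$ gives $T(y,x)\neq T(y',x)$, contradicting that $N$ is a module; if instead $\overline{I}$ is the module of $T'$, one picks $z\in L\setminus I$ (nonempty since $L$ is a co-module of $T'$ and $I$ is minimal), uses $T'(x,y)=T'(x,z)$ and $T(x,L)\neq T(y,L)$ to show $\overline{N}$ is not a module of $T$, hence $N$ is, and contradicts this with $y'\in N\setminus\{y\}$. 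To complete your proof you would need arguments of this kind; the intersection bookkeeping you sketch does not, as stated, close the branches above.
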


\begin{proof}
Suppose $I \cap M \neq \varnothing$ and $I \cap N = \varnothing$. 
 If $x \notin I$, then $I \in \text{mc}(T)$ (see Remark~\ref{rem mc overlap}), and since $M \cap I \neq \varnothing$, we obtain $I \in O_T(M)$. Therefore, it suffices to prove that $x \notin I$. Suppose for a contradiction that $x \in I$. We distinguish the following two cases.
 
 First suppose that $I$ is a nontrivial module of $T'$. In this instance, $y \equiv_{T'} I$ and thus $y \not\equiv_T I$. Since $I \subseteq \overline{N}$, it follows that $\overline{N}$ is not a module of $T$. Thus, $N$ is a nontrivial module of $T$. Let $y' \in N \setminus \{y\}$ and $x' \in I \setminus \{x\}$. Since $I$ is a module of $T'$, we have $T'(y,x) = T'(y,x')$ and $T'(y',x) = T'(y',x')$. Moreover, since $N$ is a module of $T$, we have $T(y,x') = T(y',x')$ and thus $T'(y,x') = T'(y',x')$. It follows that $T'(y,x) = T'(y',x)$. Therefore $T(y,x) \neq T(y',x)$, contradicting that $N$ is a module of $T$. 
 
  Second suppose that $I$ is not a nontrivial module of $T'$. In this instance, $\overline{I}$ is a module of $T'$. On the other hand, since $L$ is a co-module of $T$, then by Assertion~1 of Lemma~\ref{lem comodules}, $L$ is also a co-module of $T'$. Moreover, $I \setminus L \neq \varnothing$ because $I \cap M \neq \varnothing$ and $M \cap L = \varnothing$. It follows from the minimality of the co-module $I$ of $T'$ that $L \setminus I \neq \varnothing$. Let $z \in L \setminus I$. We have $T'(x,y) = T'(x,z)$ because $\overline{I}$ is a module of $T'$. Thus $T(y,x) = T(x,z)$. Moreover, $T(x,z) \neq T(y,z)$ because $T(x,L) \neq T(y,L)$. It follows that $T(y,x) \neq T(y,z)$. Therefore, $\overline{N}$ is not a module of $T$. So $N$ is a nontrivial module of $T$. Let $y' \in N \setminus \{y\}$. Because $\overline{I}$ is a module of $T'$  and $N \cap I = \varnothing$, we have $T'(x,y) = T'(x,y')$ and thus $T(x,y) \neq T(x,y')$, which contradicts that $N$ is a module of $T$.                       
\end{proof}

\begin{proof}[Proof of Proposition \ref{prop Delta 3}]
 Let $T$ be a tournament with at least five vertices  such that $\Delta(T) = 3$. There exists a $\delta$-decomposition $\{M,N,L\}$ of $T$ such that $o_T(M) \leq 1$, $o_T(N) \leq 1$, $o_T(L) \leq 1$ (see Assertion~2 of Proposition~\ref{propo Delta 234}), and 
 \begin{equation} \label{eq proof3}
 \text{there are} \ x \in \widetilde{M}, \ y \in \widetilde{N}, \ \text{and} \ z \in \widetilde{L} \ \text{satisfying} \ T(x,z) = T(z,y) = 1. 
 \end{equation}
 Thereby, $\overline{L}$ is not a module of $T$. Thus, $L$ is a module of $T$. Therefore, it follows from (\ref{eq proof3}) that 
 \begin{equation} \label{eq N}
 T(x,L) = T(L,y) =1. 
 \end{equation}
 We consider the tournament $T'= \text{Inv}(T, \{x,y\})$. 
 If $\text{mc}(T')$ admits two disjoint elements $X$ and $Y$ such that $X \cap (M \cup N) = \varnothing$ and $Y \cap (M \cup N) = \varnothing$, then $\{X,Y, M,N\}$ is a co-modular decomposition of $T$ (see Remark~\ref{rem mc overlap}), which contradicts $\Delta(T) = 3$.
It follows that 
\begin{equation} \label{eq eq}
 \text{for every} \ \delta\text{-decomposition} \ D \ of \ T', \ |\{X \in D : X \cap (M \cup N) = \varnothing\}| \leq 1.
\end{equation}
We will prove that $\Delta(T') \leq 2$. Suppose not. By (\ref{eq eq}), there are disjoint $I, J \in \text{mc}(T')$ such that $I \cap (M \cup N) \neq \varnothing$ and $J \cap (M \cup N) \neq \varnothing$. Since $L \in \text{mc}(T)$ and $\{x,y\}\cap L = \varnothing$, then $L \in \text{mc}(T')$ (see Remark~\ref{rem mc overlap}). Since $I,J$ and $L$ are pairwise distinct elements of $\text{mc}(T')$, then by minimality of $I$ and $J$, 
\begin{equation} \label{eq NIJ}
L \setminus I \neq \varnothing \ \text{and} \ L \setminus J \neq \varnothing.    
\end{equation}
Suppose for a contradiction that 
\begin{equation} \label{eq eq eq}
I \cap M  \neq \varnothing, \ I \cap N \neq \varnothing, \ J \cap M \neq \varnothing, \ \text{and} \ J \cap  N \neq \varnothing.
\end{equation}
Suppose to the contrary that $M$ and $N$ are modules of $T$. By (\ref{eq N}), $T(L,M) =0 \neq T(L,N) =1$. Since $T'(L,M) = T(L,M)$ and $T'(L,N) = T(L,N)$, we obtain $T'(L,M) = 0 \neq T'(L,N) =1$. Therefore, it follows from (\ref{eq NIJ}) and (\ref{eq eq eq}) that neither $I$ nor $J$ is a module of $T'$, which contradicts Assertion~2 of Lemma~\ref{comod part}. Thus, $M$ or $N$ is not a module of $T$. By interchanging $T$ and $T^{\star}$, as well as $M$ and $N$, we may assume that $N$ is not a module of $T$. By Lemma~\ref{degre G_T}, $o_T(N)=0$. It follows that $I \notin \text{mc}(T)$ and $J \notin \text{mc}(T)$, otherwise $I$ or $J$ belongs to $O_T(N)$, which contradicts $o_T(N)=0$. Thus, $I \cap \{x,y\} \neq \varnothing$ and $J \cap \{x,y\} \neq \varnothing$ (see Remark~\ref{rem mc overlap}). Therefore, since $I \cap J = \varnothing$, then by interchanging $I$ and $J$, we may assume $I \cap \{x,y\} = \{x\}$ and $J \cap \{x,y\} = \{y\}$. Since $N$ is not a module of $T$, then by Assertion~2 of Lemma~\ref{comod part}, $M$ is a module of $T$. Thus by (\ref{eq N}) we get $T(L,M) = 0 \neq T(L,y) = 1$ and hence $T'(L,M) = 0 \neq T'(L,y) = 1$. Since $y \in J$, $M \cap J \neq \varnothing$ (see (\ref{eq eq eq})), and $L \setminus J \neq \varnothing$ (see (\ref{eq NIJ})), it follows that $J$ is not a module of $T'$ and hence $\overline{J}$ is a module of $T'$. Thus $T'(y,x) = T'(y, L \setminus J)$ and hence $T(y,x) \neq T(y, L \setminus J)$, contradicting that $\overline{N}$ is a module of $T$ because $N$ is not a module of $T$. We conclude that (\ref{eq eq eq}) does not hold. 

By interchanging $I$ and $J$, we may assume that $I \cap M = \varnothing$ or $I \cap N = \varnothing$. 
By interchanging $T$ and $T^{\star}$, as well as $M$ and $N$, we may assume $I \cap N = \varnothing$. Moreover, $I \cap M \neq \varnothing$ because $I \cap (M \cup N) \neq \varnothing$.  Thus, Lemma~\ref{lem config} applies to the $\delta$-decomposition $\{M,N,L\}$ of $T$, and yields $x \notin I$ and $I \in O_T(M)$. More precisely,  $O_T(M) = \{I\}$ because $o_T(M) \leq 1$ and $I \in O_T(M)$. Therefore $\widetilde{M} = M \cap I$, and since $x \in \widetilde{M}$, we get $\widetilde{M} = M \cap I = \{x\}$ (see Notation~\ref{Notation M}), contradicting $x \notin I$. We conclude that $\Delta(T') \leq 2$. Since $T'$ is decomposable because $\delta(T) \geq \left\lceil \frac{\Delta(T)}{2} \right\rceil =2$ (see (\ref{eq deltaDelta})), then $\Delta(T') \geq 2$ (see (\ref{eq dec delta})). Thus $\Delta(T')=2$. By Proposition~\ref{prop Delta 2}, $\delta(T')=1$. Since $\delta(T) \leq 1 + \delta(T')$, we obtain $\delta(T) \leq 2$. But since $\Delta(T)=3$, we have $\delta(T) \geq 2$ (see (\ref{eq deltaDelta})). Thus $\delta(T) = 2$, completing the proof.                        
\end{proof}

\begin{proof}[Proof of Proposition \ref{prop Delta 4}]
Let $T$ be a tournament such that $\Delta(T) \geq 4$.
By Assertion~3 of Proposition~\ref{propo Delta 234}, $T$ admits a $\delta$-decomposition $D$ which contains four elements $M_1, M_2, M_3$ and $M_4$ satisfying the following conditions.
  \begin{enumerate}
  \item[(C1)] For every $i \in \{1,3,4\}$, $o_T(M_i) \leq 1$.
  \item[(C2)] $T(M_1,M_2) = T(M_2,M_3) = 1$.
  \item[(C3)] There exists $u \in M_4$ such that $T(u,M_1) =1$ or $T(M_3,u) =1$.
 \end{enumerate}
 By (C1), $\widetilde{M_1}$ and $\widetilde{M_3}$ are well-defined (see Notation~\ref{Notation M}). Recall that $\widetilde{M_1} \neq \varnothing$ and  $\widetilde{M_3} \neq \varnothing$. Let $x \in \widetilde{M_1}$ and $y \in \widetilde{M_3}$. Consider the tournament $T' = \text{Inv}(T, \{x,y\})$. We claim that 
 \begin{equation} \label{eq Delta4}
  \text{for every} \ I \in {\rm mc}(T'), \ \text{we have} \ I \cap (M_1 \cup M_3) = \varnothing.  
 \end{equation}
Before proving (\ref{eq Delta4}), we first use it to show that $\Delta(T') = \Delta(T) - 2$, which completes the proof. By (\ref{eq Delta4}) and by Remark~\ref{rem mc overlap}, we have 
\begin{equation} \label{eq mnc TT'}
\text{mc}(T') \subseteq \text{mc}(T) \setminus \{M_1,M_3\}.    
\end{equation}
Let $D'$ be a $\delta$-decomposition of $T'$. It follows from (\ref{eq mnc TT'}) and (\ref{eq Delta4}) that $D' \cup \{M_1,M_3\}$ is a co-modular decomposition of $T$. Thus 
$|D' \cup \{M_1,M_3\}| = |D'|+2 \leq \Delta(T)$. Since $|D'| = \Delta(T')$, we obtain $\Delta(T') \leq \Delta(T)-2$. On the other hand, $D \setminus \{M_1,M_3\}$ is a co-modular decomposition of $T'$ (see Remark~\ref{rem mc overlap}). Since $|D \setminus \{M_1,M_3\}| = |D|-2 = \Delta(T) -2$, it follows that $\Delta(T') \geq \Delta(T)-2$. We conclude that $\Delta(T') = \Delta(T)-2$ as desired.  

We now prove (\ref{eq Delta4}). Suppose toward a contradiction that there exists $I \in \text{mc}(T')$ such that $I \cap (M_1 \cup M_3) \neq \varnothing$. We first show that $I \cap M_1 \neq \varnothing$ and $I \cap M_3 \neq \varnothing$. Suppose not. By interchanging $T$ and $T^{\star}$, as well as $M_1$ and $M_3$, we may assume $I \cap M_3 = \varnothing$. Moreover, $I \cap M_1 \neq \varnothing$ because $I \cap (M_1 \cup M_3) \neq \varnothing$. Thus, Lemma~\ref{lem config} applies to the co-modular decomposition $\{M,N,L\}$ of $T$, where $(M,N,L) = (M_1,M_3,M_2)$, and yields $x \notin I$ and $I \in O_{T}(M_1)$. Since $o_T(M_1) \leq 1$ (see (C1)), we obtain $O_T(M_1) = \{I\}$. Therefore, $\widetilde{M_1} = M_1 \cap I$ (see Notation~\ref{Notation M}), which is a contradiction because $x \in \widetilde{M_1}$ and $x \notin I$. Thus $I \cap M_1 \neq \varnothing$ and $I \cap M_3 \neq \varnothing$. We now show that $I$ is not a module of $T'$. Since $I$ and $M_2$ are distinct elements of $\text{mc}(T')$ (see Remark~\ref{rem mc overlap}), then by minimality of $I$ we have $M_2 \setminus I \neq \varnothing$. Moreover, we have $T(M_2 \setminus I, M_1 \cap I) =0 \neq T(M_2 \setminus I, M_3 \cap I) =1$ (see (C2)) and thus $T'(M_2 \setminus I, M_1 \cap I) =0  \neq T'(M_2 \setminus I, M_3 \cap I) =1$. Therefore, $I$ is not a module of $T'$. So $\overline{I}$ is a module of $T'$. Moreover, since $o_{T'}(I) =0$ by Lemma~\ref{degre G_T}, and $M_2, M_4 \in \text{mc}(T')$ by Remark~\ref{rem mc overlap}, then $I \cap M_2 = I \cap M_4 = \varnothing$.
Let $v \in I \cap M_1$ and $w \in I \cap M_3$. We have $T'(v, \overline{I})=1$ because $\overline{I}$ is a module of $T'$, $M_2 \subseteq \overline{I}$, and $T'(v,M_2) = T(v,M_2) =1$ (see (C2)). Similarly, we have $T'(w, \overline{I})=0$. Thus $T'(v, \overline{I}) = T'(\overline{I},w) =1$. Since $M_4 \subseteq \overline{I}$, it follows that $T'(v, M_4) = T'(M_4, w) = 1$ and thus $T(v, M_4) = T(M_4, w) = 1$, which contradicts (C3). This completes the proof.
\end{proof}


{}

\end{document}